\newtheorem{theorem}{Theorem}[section]
\newtheorem{lemma}[theorem]{Lemma}
\newtheorem{proposition}[theorem]{Proposition}
\newtheorem{corollary}[theorem]{Corollary}
\newtheorem{remark}[theorem]{Remark}
\newtheorem{definition}[theorem]{Definition}
\newtheorem{assumption}[theorem]{Assumption}
\numberwithin{equation}{section}
\newcommand{\hatd}[1]{{}}
\newcommand{\norm}[1]{\left\lVert#1\right\rVert}
\newcommand{\abs}[1]{\left\lvert#1\right\rvert}
\newcommand{\E}{E}
\newcommand{\F}{\mathcal{F}}
\newcommand{\Rd}{\mathbb{R}^{d}}
\newcommand{\less}{\;\le\;}
\newcommand{\foralll}{\quad\text{for all}\quad}
\newcommand{\for}{\quad\text{for}\quad}
\newcommand{\with}{\quad\text{with}\quad}
\newcommand{\aand}{\quad\text{and}\quad}
\newcommand{\eps}{\varepsilon}
\newcommand{\rr}{\rightarrow}
\newcommand{\ol}{\overline}
\newcommand{\wt}{\widetilde}
\newcommand{\bd}{\begin{displaymath}}
\newcommand{\ed}{\end{displaymath}}
\newcommand{\be}{\begin{equation}}
\newcommand{\ee}{\end{equation}}
\newcommand{\bq}{\begin{eqnarray}}
\newcommand{\eq}{\end{eqnarray}}
\newcommand{\bn}{\begin{eqnarray*}}
\newcommand{\en}{\end{eqnarray*}}
\newcommand{\dl}{\delta}
\newcommand{\re}{\mathbb{R}}
 \newcommand{\lam}{\lambda}
\newcommand{\R}{\mathbb{R}}
\title{}
\title{The Multiplicative Chaos of $H=0$ Fractional Brownian Fields }
\author[1]{Paul Hager }
\author[2]{Eyal Neuman  }
\affil[1]{Institute of Mathematics, Technische Universität Berlin}
\affil[2]{Department of Mathematics, Imperial College London }
\begin{document}

 \vspace{-0.5cm}
\maketitle

\begin{abstract}
We consider a family of fractional Brownian fields $\{B^{H}\}_{H\in (0,1)}$ on $\mathbb{R}^{d}$, where $H$ denotes their Hurst parameter. We first define a rich class of normalizing kernels $\psi$ such that the covariance of 
$$
X^{H}(x) = \Gamma(H)^{\frac{1}{2}} \left( B^{H}(x) - \int_{\mathbb{R}^{d}} B^{H}(u) \psi(u, x)du\right),
$$
converges to the covariance of a log-correlated Gaussian field when $H \downarrow  0$.   

We then use Berestycki's ``good points'' approach \cite{berestycki2015elementary}  in order to derive the limiting measure of the so-called \emph{multiplicative chaos of the fractional Brownian field}
$$
M^{H}_\gamma(dx) = e^{\gamma X^{H}(x) - \frac{\gamma^{2}}{2} E[X^{H}(x)^{2}] }dx, 
$$
as $H\downarrow 0$ for all $\gamma \in (0,\gamma^{*}(d)]$, where $\gamma^{*}(d)>\sqrt{\frac{7}{4}d}$. As a corollary we establish the $L^{2}$ convergence of $M^{H}_\gamma$ over the sets of ``good points'', where the field $X^H$ has a typical behaviour. As a by-product of the convergence result, we prove that for log-normal rough volatility models with small Hurst parameter, the  volatility process is supported on the sets of ``good points'' with probability close to $1$. Moreover, on these sets the volatility converges in $L^2$ to the volatility of multifractal random walks.

\end{abstract}

\medskip

\noindent {\it 1991 Mathematics Subject Classification}: Primary, 60G15; 60G57; 60G60; Secondary, 60G18.

\noindent {\it Keywords and phrases}: fractional Brownian fields, log-correlated Gaussian fields, rough volatility, Gaussian multiplicative chaos, multifractal random walk.
\bigskip

\tableofcontents

 \section{Introduction}\label{sec:introduction}
 We consider a class of Gaussian fields which is known as \emph{fractional Gaussian fields} (FGF). We study the phase transition between two sub-classes of random fields therein, which are called \emph{fractional Brownian fields} (FBF) and \emph{log-correlated Gaussian fields} (LGF).
 
The $d$-dimensional factional Gaussian field $h$ on $\re^d$, with index $s \in \re$  (often referred to as $\textrm{FGF}_s(\re^d))$ is formally defined as 
 $$
 h = (-\Delta)^{-s/2} W,  
 $$
 where $W$ is a white noise on $\re^d$ and $ (-\Delta)^{-s/2}$ is the fractional Laplacian in $\re^d$.
 For a rigorous definition we refer to the survey paper \cite{Lod-She-Sun-Wat16}.
 
 The class of FGFs has attracted considerable attention in recent years as it includes some well known Gaussian processes and Gaussian fields which arise from the areas of stochastic analysis, mathematical physics and financial modeling. When $d=1$ and $s=1$, $h$ is a Brownian motion. The case where $s=0$ coincides with white noise and the case where $s=1$ is the Gaussian free field (GFF), both on $\re^d$.  
 
 It is often convenient to refer to the Hurst parameter
 $$
 H:=s-\frac{d}{2}, 
 $$
 that describes the scaling relations of FGFs.  For  $h\sim \textrm{FGF}_s(\re^d)$ we have
 $$
 h(\alpha \cdot)  \stackrel{d}{=}  \alpha ^H h(\cdot),  \quad \textrm{for all } \alpha>0.
 $$
 In the case where $h$ is a random tempered distribution this relation is described by using test functions (see Section 1 of \cite{Lod-She-Sun-Wat16}).
 
The \emph{fractional Brownian field} (FBF) with a Hurst parameter $H \in (0,1)$,
is a zero-mean Gaussian field $(B^H(x))_{x \in \mathbb{R}^{d}}$ with a covariance kernel given by
\be \label{fbf}
E[B^H(x) B^H(y)]= \frac{1}{2} \left(\|x\|^{2H} + \|y\|^{2H} -\|x-y\|^{2H} \right), \quad x,y \in \re^{d}, 
\ee
where $\| \cdot\|$ denotes the Euclidean norm.  This random field was introduced by Yaglom \cite{Yaglom} as a model for turbulence in fluid mechanics. 
The validity of the covariance kernel and properties of these fields such as series expansions and functional central limit theorems were extensively studied in \cite{Gangolli,Ossiander,samoradnitsky2017stable,Malyarenko,Cohen,herbin2006,lindstrom1993fractional} among others.
The case where $d=1$ is the well known \emph{fractional Brownian motion} (FBM) (see \cite{mandelbrot1968fractional}) which is a very popular modeling object in many fields such as hydrology \cite{molz1997fractional}, telecommunications and network
traffic \cite{leland1994self,mikosch2002network} and finance \cite{comte1998long}. It is shown in Section 6 of \cite{Lod-She-Sun-Wat16} that the FBF coincides with the  $\textrm{FGF}_s(\re^d)$ for $H = s-\frac{d}{2} \in (0,1)$ after choosing a suitable representation of the latter by a continuous field.

Another class of random fields which is in the focus of this work is the \emph{log-correlated Gaussian fields} (LGF).
We denote by $\mathcal S$ the Schwartz space of smooth functions on $\re^d$ with rapid decay and by $\mathcal S'$ the topological dual space of tempered distributions.
Further denote by $\mathcal{S}_0 \subset \mathcal{S}$ the space of mean-zero test functions.
The log-correlated Gaussian field $X$ is a centred Gaussian field in the space of tempered distributions modulo constants $\mathcal{S}^{\prime}/\mathcal{S}_0$ with the following covariance structure 
$$
E[ \langle X, \phi_1 \rangle  \langle X, \phi_2 \rangle] = \int_{\re^d} \int_{\re^d}\log \frac{1}{\|x-y\|} \phi_1(x) \phi_2(x)dx dy, \quad \phi_1,\phi_2\in \mathcal{S}_0. 
$$
When fixing the constants of the filed, e.g. by "pinning the field down" at a specific test function, the covariance kernel changes by an additional bounded function.  
Further variants of this definition such as choosing a different metric space as the underlying domain are also studied extensively in the literature (see e.g. Section 2 of \cite{Rhodes-Vargas14}). 

In $d=1$ the LGF was proposed as a financial model for the log-volatility \cite{Bacry13, duchon2012forecasting}. In $d=2$ the LGF coincides with the GFF up to a multiplicative constant factor. Other physical applications are also available for LGFs on higher dimensions (see Section 1.1 \cite{Dup-Rho-She-Var2014}). 
Moreover it was shown in Section 3 of \cite{Lod-She-Sun-Wat16} that the $d$-dimensional LGF is a multiple of $\textrm{FGF}_{d/2}(\re^d)$, that is, it is formally an $H=0$ fractional Gaussian field. 

Since both the FBF and the LGF are embedded in the class of fractional Gaussian fields, taking the limit as $H \rr 0$ on a sequence of FBFs formally gives a phase transition within the FGF class. We refer to Figure 1.2 in \cite{Lod-She-Sun-Wat16} for an illuminating phase transitions diagram between various subclasses of FGFs. However, plugging in directly $H=0$ in the covariance function \eqref{fbf} does not lead to any relevant process. 

Several authors have already defined some fractional Brownian motions with ${H=0}$, see in particular \cite{FKS16}. This is usually done through a regularization procedure. In \cite{neuman2018fractional} a different approach was taken for the $d=1$ case (i.e. for the limit as $H \rr0$ of FBM). The process $B^{H}$ was normalized in order to get a non-degenerate limit. The normalized sequence of processes $(X^{H}_{.})_{H\in(0,1)}$ was defined as follows:
\be \label{nr-norm}
X^{H}_{t}=\frac{B_{t}^{H}-\frac{1}{t}\int_{0}^{t}B^{H}_{u}\,du}{\sqrt{H}}, \quad  t\in\mathbb{R},
\ee
where $X_{0}^{H}=0$. Subtracting the integral in the numerator and dividing by $\sqrt{H}$ enables us to get a non-trivial limit for our sequence as $H$ tends to $0$. The approach in \cite{neuman2018fractional} was quite simple and natural from the financial viewpoint, as the normalized processes remains adapted. The main result in \cite{neuman2018fractional} states  that the sequence $\{X_{t}^{H}\}_{t \in \re}$ converges weakly as $H$ tends to zero, towards a centered Gaussian field $X$ satisfying for any $\phi_{1},\phi_{2}\in \mathcal S$
$$E[\langle X^{}_{}, \phi_{1} \rangle^{}\langle X^{}_{}, \phi_{2} \rangle^{}] =  \int_{\re}\int_{\re} K(t,s) \phi_{1}(t)\phi_{2}(s)\,dt\,ds, 
$$
where for $-\infty<s,t<\infty$, $s\neq t$ and $s,t\neq 0$ $$
 K(t,s)=\log\frac{1}{|t-s|} +g(t,s), 
$$
and $g$ is a bounded function for $s,t$ away from zero. 

One of the main objectives of this work is to extend the results in \cite{neuman2018fractional} to $\re^d$. We construct a sequence of normalized FBFs in $\mathbb R^{d}$ that converges to a LGF. We also generalise the result in \cite{neuman2018fractional} in the sense that we characterise the class of normalizing processes which lead to a meaningful limit as $H \rightarrow 0$. We show that the normalized process in \eqref{nr-norm} is just one member of the class of normalizing processes which inherits the self-similarity from the FBF.
We will also give example for a class of normalizations which preserve the stationarity of increments.

The construction of the \emph{Gaussian multiplicative chaos} (GMC) associated to a LGF $X$, is a random measure that is formally given by
$$
M_\gamma(dx) = e^{\gamma X(x) - \frac{\gamma^{2}}{2}\E(X(x)^{2}) }dx,
$$
for $\gamma > 0$. This measure was first introduced by Kahane \cite{kahane85} and later generalized in \cite{Rhodes-Vargas14,vargas16,Robert-Vargas10} and the references therein. The GMC has an extensive use in finance, as we discuss later, and also in turbulence \cite{Che-Rob-Var10,Fyo-Dou-Ros10}, disordered systems \cite{FyoBcu08,madaule2016} and Liouville quantum gravity \cite{Rhodes-Vargas14,vargas16}.

Since $X$ is a tempered distribution, one usually uses a smooth local mollifying function $\theta_{\eps}$ which converges to Dirac's delta measure as $\eps \rr0$. Then define $X^{\eps} = X\star \theta_{\eps} (x)$, where $\star$ denotes the convolution operation. For a given domain $D \subset \re^d$, define for any Borel measurable $A \subset D$ the approximation
\be \label{gmc-m}
M^{\eps}_\gamma(A) = \int_{A} e^{\gamma X^{\eps}(x) - \frac{\gamma^{2}}{2}\E(X^{\eps}(x)^{2}) }dx. 
\ee
It is well known that for $\gamma <  \sqrt{2d}$ this measure converges weakly to a non-degenerate limiting measure, which is called the GMC associated to $X$. We refer to a review paper by Rhodes and Vargas \cite{Rhodes-Vargas14} for additional details.  

The convergence in probability of  
\begin{align}  \label{vol-h}
M^{H}_\gamma([0,t]) = \int_{[0,t]} e^{\gamma X^{H}(s) - \frac{\gamma^{2}}{2}\E(X^{H}(s)^{2}) }ds, \quad t \in \re_+, 
\end{align}
when $H \downarrow 0$ towards a Gaussian Multiplicative Chaos (GMC) was proved in Corollary 2.2 of \cite{neuman2018fractional}.  
However the proof in  \cite{neuman2018fractional} is indirect, as it uses a dominance argument between the covariance of $X^{H}$ and the covariance of a ``standard'' kernel, for which the convergence properties are known. The conclusion is also based on a general result from the theory of randomized shifts by Shamov \cite{shamov16}. 

In Theorem \ref{thm-convergence} of this paper we derive a stronger convergence statement, which also applies in $\Rd$. Using Berestycki's elementary and self contained approach \cite{berestycki2015elementary}, we show that for small values of $H$, $M^{H}_\gamma(\cdot)$ vanishes on the complement of the so called "good points" of the measure, with a high probability. On the good points set we prove the $L^{2}$ convergence of $M^{H}_\gamma(\cdot)$ as $H$ tends to $0$ (see Corollary \ref{corr-supp}, Proposition \ref{prop-l2}, and the explanation at the beginning of Section \ref{section-conv}). 
The proof of convergence is direct and transparent, and it improves our understanding of the transition from stochastic exponential of a fractional Brownian fields to GMC. 

These improved results shed new light on the properties of the support of rough-volatility models with small a Hurst parameter and also show that volatility process on the set of good points convergence in $L^2$ to the volatility of multifractal random walks. We discuss these applications in more detail in Section \ref{fin-mot}.  

Moreover, the convergence result in Theorem \ref{thm-convergence} does not explicitly depend on the construction of the fractional Brownian fields, but only on their cross-covariance structure, which is defined in \eqref{eq:XHCovariance}.
This is in contrast to the convergence result of Shamov in Theorem 25 of \cite{shamov16}, which explicitly imposes conditions on the construction of the fields approximating the LGF.
The class of fractional Brownian fields for which Theorem \ref{thm-convergence} is applicable, includes all normalizations of fractional Brownian fields as discussed in Section \ref{sec-norm}, however other examples are conceivable. 

\subsection{Financial Motivation}  \label{fin-mot}
Modeling the volatility of assets returns using factional Brownian motion trace back to the pioneering work of Comte \cite{comte1998long}. 
Recently, a new approach has been introduced in \cite{Gat-Jai-Ros14} for the use of FBM with small Hurst parameter in volatility modelling . Careful analysis of volatility process of thousands of assets suggests that the log-volatility process actually behaves like a FBM with Hurst parameter between 0.02 to 0.2 ( see \cite{bennedsen2016decoupling} and \cite{Fukas19}). Hence various approaches using FBM with small Hurst parameter have been introduced for volatility modeling. These models are referred to as {\it rough volatility models}, see \cite{bayer2017regularity,bayer2016pricing,bayer2017short, bennedsen2017hybrid,el2016characteristic,forde2015asymptotics,fukasawa2017short,jacquier2017pathwise} for more details and practical applications.

Another class of popular models for assets returns is the \emph{multifractal random walks}  (see e.g \cite{Man-Cal-Fis97, bacry2001multifractal,Muz-Bac03, Muz-Bac03, Bar-Men02, cal-fis04}, among others). In these models the log-price is defined as $Y_t=B_{M([0,t])},$
where $B$ is a Brownian motion and $$M(t)=\underset{l\rightarrow 0}{\text{lim }}\sigma^2\int_0^t e^{w_l(u)}du, \text{ a.s.},$$ with $\sigma^2>0$ and
$w_l$ a Gaussian process such that for some $\lambda^2>0$ and $T>0$ 
$$\text{Cov}[w_l(t),w_l(t')]=\lambda^2\text{log}(T/|t-t'|), \text{ for }l<|t-t'|\leq T.$$   
We refer to \cite{Muz-Bac03} for additional details. Hence we see that as $l \rr 0$, $M$ formally corresponds to a measure of the form $\text{exp}(X_t)dt$, where $X$ is a LGF. This again could be made rigorous by using the notion of Gaussian multiplicative chaos which was described earlier. 

One of the main goals, and in fact the initial motivation of writing this paper is to describe the phase transitions of the volatility process between rough volatility models, which are indexed by a Hurst parameter $H>0$, and the multifractal random walk model which corresponds to $H=0$. In particular we would like to classify the class of processes that can be used to normalize $B^{H}$ as in \eqref{nr-norm}, as the current normalization is quite specific and keeps $X_t^H$ as a self-similar process. Having a large class of suitable normalizing processes could help us to choose $X^H$ which fits time series observations better (see Section \ref{sec-norm} for additional details). 

We also derive the convergence of the volatility $M^H_\gamma([0,t])$ in \eqref{vol-h} when $H$ tends to $0$. Theorem \ref{thm-convergence} in the one-dimensional case improves the convergence in probability result of \cite{neuman2018fractional}. We provide a stronger and more refined statement by showing that for small values of $H$, $M^{H}_\gamma([0,t])$ vanishes outside sets of "good points", with a high probability. On the sets of "good points", where $X^H$ experience a typical behaviour,  we prove the $L^{2}$ convergence of $M^{H}_\gamma$ as $H$ tends to $0$ (see Corollary \ref{corr-supp} and Remark \ref{rem-supp} afterwords, Proposition \ref{prop-l2} and explanation at the beginning of Section \ref{section-conv}). These refined results point out that the volatility process in log-normal rough volatility models with small (but not necessarily zero) Hurst parameter, are supported on the sets of good points with probability close to $1$.  Moreover, it follows that on the good points sets, the rough-volatility process converges in $L^2$ to the volatility of multifractal random walks.

\section{Main results}
In this section we present our main results on the convergence of the stochastic exponential of FBFs when $H$ tends to zero, and on the normalization of the FBFs. 
We first present our convergence results. 

\subsection{Convergence of the Multiplicative Chaos of FBFs}  

Let $D$ be a bounded domain in $\re^d$ and fix $H_0 \in (0,1/2)$. We call $X = (X^{H})_{0 < H < H_0}$ a family of normalized fractional Brownian fields if it has the following covariance structure
\begin{equation}\label{eq:XHCovariance}
\E(X^{H}(x)X^{h}(y)) = C_{H, h}\left( \frac{1- \norm{x-y}^{H+h}}{H+h} + g^{H,h}(x,y) \right),
\end{equation}
for $x,y \in D$ and $H,h \in (0, H_0)$, where $C_{H,h} > 0$ is a constant and $g^{H,h}: D \times D \to \R$ is a bounded function.
For the rest of this paper we will make the following assumption. 
\begin{assumption}\label{asmp}   
We assume that covariance function in (\ref{eq:XHCovariance}) satisfies the following: 
\begin{enumerate}
\item  
\begin{align} \label{c-h}
\lim_{\ol H\to 0}\Big( \sup_{0<h,H < \ol H}\abs{C_{h, H} -1}\Big) = 0.
\end{align}
\item  The functions $g^{H,h}$ converge to a bounded function $g$ as follows 
\begin{equation} \label{eq:gEstimate}
\lim_{(H,h) \rr 0} \sup_{x,y\in D}\big| g^{H,h}(x,y) - g(x,y)\big| =0,
\end{equation}
where the limit $(H,h) \rr 0$ is understood as a limit in $\re^2$. 
\end{enumerate} 
\end{assumption}

\begin{remark} \label{rem-cov} 
From Assumption \ref{asmp} we get the pointwise convergence of the covariance kernel 
\begin{align}\label{eq:limit_XH_covariance}
\lim_{H\to 0}\E(X^{H}(x)X^{H}(y)) = \log\frac{1}{\norm{x-y}} + g(x,y), \quad \textrm{for all } x,y \in D, \ x\neq y.
\end{align}
where $g$ is given in \eqref{eq:gEstimate}. 
\end{remark}

\begin{remark}
In Section \ref{sec-norm} we will show that a family of FBFs constructed on the same Wiener space with suitable normalization, has the cross-covariance structure \eqref{eq:XHCovariance} and satisfies Assumption \ref{asmp}.
In particular this shows that the normalized process from \cite{neuman2018fractional}, which is given in \eqref{nr-norm}, is included in the class of normalized FBFs which satisfy Assumption \ref{asmp}. 
\end{remark} 

For $\gamma > 0$ and every $H\in (0, H_0)$ we define the random measure $M^{H}_\gamma$ on $D$ as follows 
\begin{align}\label{approxMeasFBM}
M^{H}_\gamma(dx) = e^{\gamma X^{H}(x) - \frac{\gamma^{2}}{2}\E(X^{H}(x)^{2}) }dx. 
\end{align}
We call $M^{H}_\gamma$ as the multiplicative chaos associated to the normalized FBF $X^H$.

Now we are ready to present one of our main result which deals with the convergence in probability of $M^{H}_\gamma$ as $H \rr 0$.

\begin{theorem}\label{thm-convergence}
The sequence of measures $\{M^{H}_\gamma\}_{H \in (0, H_0)}$ converges in probability as $H\rr0$ to a Borel measure $M_\gamma$ in the topology of weak convergence of measures on $D$, for all $\gamma \leq \gamma^{*}(d)$, with $\gamma^{*}(d)>\sqrt{\frac{7}{4}d}$.
\end{theorem}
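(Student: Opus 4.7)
Following Berestycki's ``good points'' approach, the plan is to decompose $M^H_\gamma$ into a part supported on the set where $X^H$ takes close-to-typical values and a part supported on exceptionally thick points, show that the latter vanishes in $L^1$, and establish a Cauchy-in-$L^2$ estimate for the former using the cross-covariance structure \eqref{eq:XHCovariance} and Assumption \ref{asmp}. Since the variance $v_H := \E[X^H(x)^2]$ is comparable to $1/(2H)$ uniformly in $x \in D$ by \eqref{eq:XHCovariance} and \eqref{c-h}, the natural analogue of Berestycki's good set at level $\alpha$ is
$$
G^H_\alpha = \{x \in D : X^H(x) \le \alpha v_H\},
$$
with $\alpha > \gamma$ to be tuned. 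The goal is to prove convergence in $L^2$ of $M^H_\gamma(A \cap G^H_\alpha)$ for every Borel $A \subset D$ and show that the mass on the complement is asymptotically negligible.

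\textbf{Core $L^2$ computation on good points.} For a Borel set $A \subset D$, writing out the second moment and performing a Cameron--Martin shift yields
$$
\E\bigl[M^H_\gamma(A \cap G^H_\alpha)^2\bigr] = \int_{A \times A} e^{\gamma^2 \E[X^H(x)X^H(y)]} \, \mathbb{P}\bigl(\tilde X^H(x) \le \alpha v_H, \, \tilde X^H(y) \le \alpha v_H\bigr)\, dx\, dy,
$$
where under the tilted measure $\tilde X^H$ has mean $\gamma (\E[X^H(\cdot)X^H(x)] + \E[X^H(\cdot)X^H(y)])$. Plugging in \eqref{eq:XHCovariance}, the exponential factor is comparable to $\norm{x-y}^{-\gamma^2}$ up to a uniformly bounded factor coming from $g^{H,h}$, while the Gaussian probability of the shifted field remaining below the good threshold is controlled by a standard tail bound. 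The resulting integrand is integrable over $A \times A$ precisely when $\gamma$ stays below an explicit threshold determined by the interplay between the singularity $\norm{x-y}^{-\gamma^2}$ and the Gaussian tail bonus; this is what produces the constant $\gamma^*(d) > \sqrt{\tfrac{7}{4}d}$.

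\textbf{Vanishing of the bad part and Cauchy in $L^2$.} Directly applying the Cameron--Martin shift and a one-dimensional Gaussian tail estimate shows $\E[M^H_\gamma(A \setminus G^H_\alpha)] \to 0$ as $H \downarrow 0$ provided $\alpha > \gamma$. For the Cauchy estimate, expand
$$
\E\bigl[(M^H_\gamma(A \cap G^H_\alpha) - M^h_\gamma(A \cap G^h_\alpha))^2\bigr]
$$
into three double integrals with integrands built from $e^{\gamma^2 \E[X^H(x)X^H(y)]}$, $e^{\gamma^2 \E[X^H(x)X^h(y)]}$ and $e^{\gamma^2 \E[X^h(x)X^h(y)]}$. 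By \eqref{c-h} and \eqref{eq:gEstimate} all three cross-covariances converge pointwise on $\{x \ne y\}$ to $\log\tfrac{1}{\norm{x-y}} + g(x,y)$, so by Remark \ref{rem-cov} each integrand tends to $\norm{x-y}^{-\gamma^2} e^{\gamma^2 g(x,y)}$ times the same Gaussian probability limit. The uniform bound from the core $L^2$ computation provides domination, and dominated convergence yields $L^2$ convergence on good points. Combined with the bad-set estimate, this gives convergence in probability of $M^H_\gamma(A)$ for every continuity set $A$, which upgrades to weak convergence in probability by testing against a countable dense family in $C(\ol D)$.

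\textbf{Main obstacle.} The delicate point is the core $L^2$ estimate: one must choose $\alpha$ as close to $\gamma$ as possible (to allow $\gamma$ as large as $\gamma^*(d)$ in Step 3) while keeping the joint Gaussian probability small enough to absorb the singularity $\norm{x-y}^{-\gamma^2}$. This requires a careful two-point analysis that treats three regimes of $\norm{x-y}$ (comparable to $1$, intermediate, and comparable to $e^{-v_H}$) and balances the exponents coming from the covariance kernel against those coming from the Gaussian tail bound. The gap between $\gamma^*(d)$ and the expected critical value $\sqrt{2d}$ ultimately reflects that we only have the bounded error control \eqref{eq:gEstimate} on $g^{H,h}$, rather than a pointwise construction of the fields $X^H$ that would allow the sharper estimates of Shamov \cite{shamov16}.
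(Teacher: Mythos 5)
Your overall architecture (split $M^H_\gamma$ into a part on good points and a remainder, kill the remainder in $L^1$, prove a Cauchy-in-$L^2$ estimate on the good part via Cameron--Martin and dominated convergence, then upgrade to weak convergence) matches the paper. But there is a genuine gap in the core second-moment estimate: your good event $G^H_\alpha=\{X^H(x)\le \alpha v_H\}$ is a \emph{single-scale} condition, tested only at the Hurst parameter $H$ being integrated, and this cannot produce any threshold beyond $\gamma<\sqrt{d}$, let alone $\gamma^*(d)>\sqrt{\tfrac74 d}$. To see the failure concretely, fix $\norm{x-y}=\rho$ small and let $H\downarrow 0$. By \eqref{eq:XHCovariance} the Cameron--Martin shift of $X^H(x)$ under your tilted measure is $\gamma\big(v_H+\tfrac{1-\rho^{2H}}{2H}+O(1)\big)=\gamma v_H+\gamma\log\tfrac1\rho+O(1)$, the threshold is $\alpha v_H$ with $\alpha>\gamma$, and the standard deviation is $\sqrt{v_H}\sim(2H)^{-1/2}$. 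Hence the normalized exceedance is $\big((\alpha-\gamma)v_H-\gamma\log\tfrac1\rho\big)/\sqrt{v_H}\to+\infty$, so the two-point probability tends to $1$ and contributes \emph{no} decay in $\rho$ whatsoever. The limiting integrand is therefore $e^{\gamma^2 g(x,y)}\rho^{-\gamma^2}$, which is integrable over $A\times A$ only for $\gamma^2<d$; by the matching lower bound (cf.\ Lemma \ref{lem-lb}) the second moment genuinely diverges for $\gamma^2\ge d$ under your definition of the good set. The "Gaussian tail bonus" you invoke in the core computation simply is not there at a single scale.

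The paper's remedy, which is the essential idea you are missing, is to make the good event \emph{multi-scale}: $G^{H,\ol H}_\alpha(x)=\{X^{h}(x)\le \alpha/(h+H)\ \text{for all }h\in S_{H,\ol H}\}$ in \eqref{g-set}, where $S_{H,\ol H}$ is a grid of auxiliary Hurst parameters between $H$ and $\ol H$ (the analogue of Berestycki's requirement that the circle averages be good at \emph{all} intermediate radii, transported to the Hurst-parameter axis via the cross-covariance \eqref{eq:XHCovariance}). In the two-point estimate one then tests the good condition at the scale $h^*=\big[-\kappa^*/\log\norm{x-y}\big]_S$ matched to the separation, so that $\norm{x-y}^{h^*}\asymp e^{-\kappa^*}$ is a fixed constant: the shift then exceeds the threshold by an amount of order $\gamma(1-e^{-\kappa^*})/h^*$ against a standard deviation of order $(2h^*)^{-1/2}$, yielding a genuine polynomial gain $\norm{x-y}^{\bar\xi\gamma^2}$ with $\bar\xi=f(\kappa^*)\approx 0.4287$ after optimizing over $\kappa^*$ (Proposition \ref{prop-j1}). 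Moreover, a one-point "standard tail bound" is not enough to reach this constant; the paper needs the genuinely bivariate Gaussian tail estimate of Savage (Theorem \ref{thm-sav}, Lemma \ref{lem-p-star}), which retains the correlation factor $2/(2-\norm{x-y}^{2h^*})$. It is exactly this combination that gives $\gamma^*(d)=\sqrt{d/(1-\bar\xi)}>\sqrt{\tfrac74 d}$, and it also forces the four-region decomposition \eqref{region} (the extra cutoff at $\norm{x-y}=e^{-\kappa^*/H}$ marks where $h^*$ leaves the grid and one must fall back on the trivial bound). As written, your proposal proves the theorem only for $\gamma<\sqrt{d}$.
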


\begin{remark} 
Theorem \ref{thm-convergence} generalizes Corollary 2.2 of \cite{neuman2018fractional} to any dimension. We recall that the later dealt with convergence in probability of $M^{H}_\gamma$ on $\re$. A central ingredient in the the proof relates to the concept of ``good points'', which are points in the domain $D$ where the field $X^H$ has a typical behaviour (see \eqref{g-set} for the precise definition). The proof of Theorem \ref{thm-convergence} derives a stronger statement of convergence than Corollary 2.2 of \cite{neuman2018fractional}, as we show that for small $H$, $M^{H}_\gamma(\cdot)$ vanishes on the complement of the set of good points with high probability (see Corollary \ref{corr-supp} and Remark \ref{rem-supp}). 
On the set of good points we prove the $L^{2}$ convergence of $M^H_\gamma$ (see Proposition \ref{prop-l2} and the discussion at the beginning of Section \ref{section-conv}). 
\end{remark} 

\begin{remark} 
The proof of Theorem \ref{thm-convergence} is based on Berestycki's approach for the  construction of Gaussian multiplicative chaos \cite{berestycki2015elementary}. As we mentioned before, this was done by first mollifying the log-correlated Gaussian field $X^\eps$ in \eqref{gmc-m}, which corresponds to $X^H$ in our case. However $X^\eps$ as being a mollified version of LGF has some nice properties which are fundamental for the proof. For instance, if $\eps = e^{-t}$ and $\wt X_t:= X^\eps(x)$, then we have
$$
\textrm{Cov}(\wt X_t, \wt X_s) = s\wedge t +O(1),  
$$
see equation (3.2) and Lemma 3.5 therein. This means that on the scale of $\eps = e^{-t}$, $X^\eps$ behaves approximately like a Brownian motion. This property clearly does not apply for $\{X^H\}_{H \in (0,\ol H)}$ which experience the long range dependence of the fractional Brownian field. 

In order to overcome this gap, we had to improve Berestycki's argument to our purpose in several parts of the paper. For example, in the proof of Proposition \ref{prop-j1} we bound the two point probabilities (see \eqref{p-med} and Lemma \ref{lem-p-star}), where in \cite{berestycki2015elementary} one point probability was sufficient (see (3.8)-- (3.11) therein). This bound was crucial both for the proof of uniform integrabilty (see Proposition \ref{prop:uniform_integrability}) and for the proof of convergence (see Lemmas \ref{lemma-con1} and \ref{lem-lb}). The  improvements helped to enlarge the convergence interval in Theorem \ref{thm-convergence}, however we did not get the full convergence interval as in Theorem 1.1 of \cite{berestycki2015elementary} which was $\gamma <\sqrt{2d}$. The question whether $M^H_\gamma$ converge when $H\rr 0$, for $ \gamma^*(d) \leq \gamma < \sqrt{2d}$ remains as an interesting open  question. 
\end{remark} 

\begin{remark} [Application to rough-volatility models]
Corollary \ref{corr-supp} which is one of the ingredients in the proof of Theorem \ref{thm-convergence}, gives novel results on the properties the support of rough-volatility models with small Hurst parameter.
Indeed we show that for $H$ small enough the  volatility process $M^H_\gamma([0,t])$ in \eqref{vol-h} is supported on good points $G^{H,\ol H}_\alpha(x)$ in \eqref{g-set}, with a probability that is asymptotically close to $1$. Moreover, from Proposition \ref{prop-l2} it follows that on the good points set the rough-volatility process converges in $L^2$ to the volatility of multifractal random walks (see also Remark \ref{rem-supp}). 
\end{remark}

\subsection{Normalization of fractional Brownian fields} \label{sec-norm}
In this section we will define a general class of normalizations in the sense of \eqref{nr-norm}, that apply to fractional Brownian fields. The normalized field will be a centered Gaussian field with covariance as in \eqref{eq:XHCovariance} that satisfies Assumption \ref{asmp}. 

We first explain how we construct the family of FBFs $\{B^H\}_{H \in (0,H_0)}$ for some $0<H_0<1$, on the same probability space. Then we present our main results regarding the normalization.  The values of the constants $b_{h,H}$, $h_{h,H}$, $o_{h,H}$, $m_H$, $k_H^d$, $C_{H,h}^d$ that appear in this section are given in Appendix \ref{appendix}.  

We start by construction in the FBM case, i.e. when $d=1$.
Let $(\Omega, \F, (\F_t)_{t \in \R}, \mathbb{P})$ be a filtered probability space on which a two-sided standard Brownian motion $W=(W_t)_{t\in\R}$ is defined.
A well known result by Mandelbrot and van Ness \cite{mandelbrot1968fractional}
states that the following stochastic integral
\begin{align}\label{eq:mandelbrot_van_ness}
\wt B^{H}(t) = m_H \int_{\mathbb{R}}\Big( (t-s)^{H-\frac{1}{2}}_+ - (-s)^{H-\frac{1}{2}}_+\Big) dW_s, \quad t\in\R,
\end{align}
defines a fractional Brownian motion $\wt B^{H} = (\wt B^{H}(t))_{t\in\R}$ with a Hurst parameter $H\in(0,1)$.
Moreover, it is evident from this construction, yet rarely considered in the literature, that (\ref{eq:mandelbrot_van_ness}) induces a cross-correlation for fractional Brownian motions of different Hurst parameters.
In particular it follows from Theorem 2 in \cite{dobric2006fractional} that 
\be \label{mv-cov}
\begin{aligned} 
\E(\wt B^{H}(t) \wt B^{h}(s)) 
&=\Bigg. b_{h,H}\cdot\Big(\abs{s}^{h+H} + \abs{t}^{h+H} - \abs{t-s}^{h+H}\Big) - o_{h,H}\cdot f^{h,H}(s,t),
\end{aligned}
\ee
for all $t, s \in \R$ and $H, h \in (0,1)$ with $H + h\neq 1$ where 
\be\label{mv-f}
\begin{aligned} 
f^{h,H}(s,t) = \mathrm{sgn}(s)\abs{s}^{h+H} + \mathrm{sgn}(t)\abs{t}^{h+H} - \mathrm{sgn}(t-s)\abs{t-s}^{h+H}.
\end{aligned}
\ee
The Mandelbrot-van Ness representation is particularly interesting for financial applications, since $(\wt B^{H}(t))_{t\ge 0}$ is adapted to the filtration $(\F_t)_{t\ge 0}$ and therefore allows to construct an adapted fractional Brownian motion with correlation to the underlying Brownian motion $W$.
The construction (\ref{eq:mandelbrot_van_ness}) is sometimes referred to as \emph{non-anticipating}.

Note that there is no evident extension of (\ref{eq:mandelbrot_van_ness}) to the construction of fractional Brownian fields on $\Rd$.
However, we can give up the adaptedness and replace the kernel in  (\ref{eq:mandelbrot_van_ness}) by a reflected version, defining a fractional Brownian motion $B^{H}$ for $H \in (0,1)\setminus\{\frac{1}{2}\}$ by
\begin{align}\label{eq:one_dim_symmetric}
 B^{H}(t) = k^{1}_H \int_{\mathbb{R}}\Big( \abs{t-s}^{H-\frac{1}{2}} - \abs{s}^{H-\frac{1}{2}}\Big) dW_s, \quad t\in\R.
\end{align}
This representation is sometimes referred to as \emph{well-balanced} (see \cite[Chapter 7.2.1]{samoradnitsky2017stable}).
In Proposition 11 of \cite{dobric2006fractional} it was shown that 
\begin{align} \label{cov-mv}
E [ B^{H}(t) B^{h}(s)] = c^{1}_{H,h}\Big(\abs{s}^{h+H} + \abs{t}^{h+H} - \abs{t-s}^{h+H}\Big), \quad  t,s \in \re. 
\end{align}

The construction (\ref{eq:one_dim_symmetric}) has a natural extension to fractional Brownian fields.
Let $W$ be a \emph{white noise measure} in $\Rd$, defined on a probability space $(\Omega, \F, \mathbb{P})$.
It was proved in \cite{lindstrom1993fractional} that we can construct a fractional Brownian field $B^{H}$ by
\begin{align}\label{eq:fbm_construction_rd}
B^{H}(x) = k^{d}_{H}\int_{\Rd}\Big( \norm{x-y}^{H-\frac{d}{2}} - \norm{y}^{H-\frac{d}{2}}\Big) W(dy), \quad x\in\Rd,
\end{align}
where $H \in (0,1)$. Since we could not find a reference for the computation of covariance of \eqref{eq:fbm_construction_rd}, we derive this result in the following lemma. 

\begin{lemma}[Covariance of fractional Brownian fields]\label{lem:proof_fbf_covariance}  The covariance structure of $\{B^H\}_{H\in (0,1)}$ in \eqref{eq:fbm_construction_rd} given by
\begin{align} \label{cov-fbf}
E[B^{H}(x)B^{h}(y)]=c_{H,h}^d\big(\norm{x}^{H+h}+\norm{y}^{H+h}+\norm{x-y}^{H+h}\big), 
\end{align}
for all $x,y \in \Rd$, $H, h \in (0,1/2)$.
\end{lemma}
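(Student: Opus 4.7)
The plan is to apply the Wiener isometry to the representation (\ref{eq:fbm_construction_rd}), obtaining
\[
C(x,y) := E[B^H(x)B^h(y)] = k^d_H k^d_h \int_{\Rd} \phi_H(x,z)\,\phi_h(y,z)\,dz, \qquad \phi_H(x,z) := \|x-z\|^{H-d/2}-\|z\|^{H-d/2},
\]
and then to extract the structural form of $C$ by exploiting the invariances of this representation. First I would verify integrability: the local singularities at $z\in\{0,x,y\}$ are controlled since $H-d/2>-d$, and at infinity a first-order Taylor expansion gives $\phi_H(x,z)=O(\|z\|^{H-d/2-1})$, so the product integrand is $O(\|z\|^{H+h-d-2})$, which is integrable for $H+h<2$.

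Next I would use three invariances of the white noise $W$. Rescaling $z=\alpha w$ together with $W(\alpha\,dw)\stackrel{d}{=}\alpha^{d/2}W(dw)$ yields the joint self-similarity $(B^H(\alpha\cdot), B^h(\alpha\cdot))\stackrel{d}{=}(\alpha^H B^H(\cdot), \alpha^h B^h(\cdot))$, so $C(\alpha x,\alpha y)=\alpha^{H+h}C(x,y)$. Orthogonal invariance of $W$ gives $C(Rx,Ry)=C(x,y)$ for all $R\in O(d)$. Translating the integration variable by $y$ in the Wiener integrals for $B^H(x)-B^H(y)$ and $B^h(x)-B^h(y)$ produces the joint identity in law $(B^H(x)-B^H(y), B^h(x)-B^h(y))\stackrel{d}{=}(B^H(x-y), B^h(x-y))$; taking covariances gives
\[
C(x,x)+C(y,y)-C(x,y)-C(y,x) = C(x-y,x-y).
\]
Combining self-similarity and rotation invariance, the diagonal $x\mapsto C(x,x)$ is rotation-invariant and $(H+h)$-homogeneous, so $C(x,x)=c^{*}_{H,h}\|x\|^{H+h}$ for a scalar $c^{*}_{H,h}$ depending only on $H,h,d$. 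Substituting gives
\[
C(x,y)+C(y,x) = c^{*}_{H,h}\bigl(\|x\|^{H+h}+\|y\|^{H+h}-\|x-y\|^{H+h}\bigr).
\]

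The remaining step, which I expect to be the main obstacle, is the symmetry $C(x,y)=C(y,x)$; when $H\neq h$ this is not a priori clear, as the Wiener integrands of the two covariances are genuinely different. I would expand the integrand of $C(x,y)-C(y,x)$ and group its six nontrivial terms into three brackets of the form $\|a\|^{H-d/2}\|b\|^{h-d/2}-\|b\|^{H-d/2}\|a\|^{h-d/2}$ for the pairs $(a,b)=(x-z,y-z)$, $(z,x-z)$ and $(y-z,z)$. In each bracket the leading $\|z\|^{H+h-d}$ behaviour at infinity cancels, leaving a remainder of order $\|z\|^{H+h-d-1}$, so each bracket is absolutely integrable. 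Each bracket then flips sign under a suitable reflection of the integration variable ($z\mapsto x+y-z$, $z\mapsto x-z$, $z\mapsto y-z$ respectively) and hence integrates to zero. This yields the symmetry, and together with the previous display one concludes
\[
C(x,y) = c^d_{H,h}\bigl(\|x\|^{H+h}+\|y\|^{H+h}-\|x-y\|^{H+h}\bigr)
\]
with $c^d_{H,h}=c^{*}_{H,h}/2$, which can finally be identified with the explicit constant recorded in the appendix (the sign of the $\|x-y\|^{H+h}$ term being forced to be negative for consistency with the FBF covariance (\ref{fbf}) in the diagonal case $H=h$).
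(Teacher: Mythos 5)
Your structural derivation is correct, and it takes a genuinely different route from the paper's. The paper applies the It\^o isometry to the increment covariance $E[(B^H(x)-B^H(y))(B^h(x)-B^h(y))]$ and reduces it to $\tilde c^d_{H,h}\norm{x-y}^{H+h}$ in a single stroke via the substitution $w=(u-y)/\norm{x-y}$ (which packages your translation, scaling and rotation invariances into one change of variables), and then spends the bulk of the proof identifying the constant. You instead extract the form of $E[B^H(x)B^h(y)]$ from the invariances of the white-noise representation plus polarization. Your route buys one real improvement: you isolate and prove the symmetry $E[B^H(x)B^h(y)]=E[B^H(y)B^h(x)]$ for $H\neq h$, which is genuinely needed to pass from the increment covariance back to the cross-covariance and which the paper passes over in silence. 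Your three-bracket reflection argument for it is sound: in each bracket the $\norm{z}^{H+h-d}$ tails cancel, leaving $O(\norm{z}^{H+h-d-1})$, which is integrable at infinity precisely because $H+h<1$ --- this is where the restriction $H,h\in(0,1/2)$ enters. You also correctly observe that the sign of the $\norm{x-y}^{H+h}$ term in the displayed statement must be negative (as written it is a typo), and your bookkeeping $c^d_{H,h}=c^{*}_{H,h}/2$ is the internally consistent one: evaluating the claimed formula on the diagonal forces the prefactor to be half of the increment-covariance constant, a factor of $2$ that the paper's closing identification $\tilde c^d_{H,h}=c^d_{H,h}$ does not visibly track.

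The one genuine gap is your last sentence. The lemma asserts the covariance with the \emph{specific} constant $c^d_{H,h}$ of Appendix \ref{appendix}, and that explicit value is used downstream: Step 3 of the proof of Theorem \ref{thm:fbm_normalization} needs the exact Gamma-function expression to verify $C_{0,0}=1$, i.e.\ condition \eqref{c-h}. Your argument only produces the constant as (half of) the unevaluated integral $k^d_Hk^d_h\int_{\Rd}\big(\norm{e-w}^{H-\frac d2}-\norm{w}^{H-\frac d2}\big)\big(\norm{e-w}^{h-\frac d2}-\norm{w}^{h-\frac d2}\big)dw$, and ``identifying it with the constant recorded in the appendix'' is not a formality. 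In the paper this is the main content of the proof: the integrand is recognized as a product of differences of Riesz kernels, Plancherel converts the integral to $\int_{\Rd}\abs{1-e^{-i\xi e_1}}^{2}\norm{\xi}^{-(H+h+d)}d\xi$ up to explicit factors, and that integral is evaluated in closed form via Lemma \ref{lem:spherical_riesz_integral}. If the lemma is to be used in the form the paper needs, this computation still has to be supplied.
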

The proof of Lemma \ref{lem:proof_fbf_covariance} is postponed to Section \ref{corr-comp}. 

Next we define the class of normalizing functions for the FBFs which were described above.  

Let $\psi$ be a positive integration kernel on $\Rd$, that is $\psi: \Rd \times \Rd \to \R_+ $ is a measurable function. 
For a domain $D\subset{\Rd}$ and $0 < H_0 < \frac{1}{2}$ we define the following class of normalizing kernels.

\begin{definition} [Normalizing kernels]  \label{def-ker}
We say that the kernel $\psi$ is in the class of normalizing kernels $\mathcal N_{H_0}(D)$, if it satisfies the following conditions: 
\begin{itemize}
\item[(i)] For any $y\in D$, $y\mapsto \psi(x,y)$ is continuous almost everywhere and  
\begin{align}
\label{psiConditionOne}
\int_{\re^d}\psi(x,y)dx &= 1.
\end{align}
\item [(ii)] The following bounds hold:
\begin{align} \label{psiConditionTwo}
 \sup_{y\in D}\int_{\Rd}\norm{x}^{2H_0}\psi(x,y) dx & < \infty, \\
\label{psiConditionThree}
\sup_{y\in D} \int_{\Rd}\left(\log_- \norm{x-y} \right)^2 \psi(x,y) dx & < \infty,\\
\label{psiConditionFour}
\sup_{y,w \in D} \int_{\Rd}\int_{\Rd}\left(\log_-\norm{x-v}\right)^2 \psi(x,y) \psi(v,w) dxdv &< \infty,
\end{align}
\end{itemize}
where $\log_-(x) = \min(\log x, 0)$.
\end{definition} 
 
Now we are ready to state our main result regarding FBFs normalization. 

\begin{theorem}\label{thm:fbm_normalization}
Let $\{B^{H}\}_{H \in (0, H_0)}$ be a family of fractional Brownian fields as in \eqref{eq:mandelbrot_van_ness}, \eqref{eq:one_dim_symmetric} or \eqref{eq:fbm_construction_rd}.  
For any $\psi \in \mathcal N_{H_0}(D)$, 
\begin{align}\label{def:XH}
X^{H}(x) := \Gamma(H)^{\frac{1}{2}} \left( B^{H}(x) - \int_{\Rd} B^{H}(u) \psi(u, x)du\right), \quad x \in D, \ H\in(0,H_0),
\end{align}
is a family of centred Gaussian fields with covariance structure \eqref{eq:XHCovariance}, which agrees with Assumption \ref{asmp}.  
\end{theorem}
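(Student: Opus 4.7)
The approach is direct: compute $\E(X^H(x)X^h(y))$ explicitly and identify it with the canonical form \eqref{eq:XHCovariance}. First, $X^H$ is a centered Gaussian field, since it is a bounded linear functional of $B^H$; well-definedness of $\int \psi(u,x)B^H(u)du$ follows from $\E[B^H(u)^2]=O(\|u\|^{2H_0}+1)$ together with \eqref{psiConditionTwo}.

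The main computation substitutes the FBF cross-covariance from Lemma \ref{lem:proof_fbf_covariance} (or \eqref{cov-mv}, respectively \eqref{mv-cov}, for the 1-dimensional constructions), which has the generic form $c^{d}_{H,h}\bigl[\|u\|^{H+h}+\|v\|^{H+h}-\|u-v\|^{H+h}\bigr]$, into the four-term expansion of $\E(X^H(x)X^h(y))$. Exploiting the normalization $\int\psi(u,x)du=1$ from \eqref{psiConditionOne}, the "endpoint" contributions $\|u\|^{H+h}$ and $\|v\|^{H+h}$ cancel in pairs, leaving only the distance terms. Setting $C_{H,h}:=(H+h)\,c^{d}_{H,h}\,\Gamma(H)^{1/2}\Gamma(h)^{1/2}$ and rearranging once more via \eqref{psiConditionOne} produces
\begin{align*}
g^{H,h}(x,y) &= \int \psi(u,x)\,\frac{\|u-y\|^{H+h}-1}{H+h}\,du + \int \psi(v,y)\,\frac{\|x-v\|^{H+h}-1}{H+h}\,dv \\
&\quad - \iint \psi(u,x)\psi(v,y)\,\frac{\|u-v\|^{H+h}-1}{H+h}\,du\,dv,
\end{align*}
so that $\E(X^H(x)X^h(y))$ takes the form \eqref{eq:XHCovariance}. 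For the Mandelbrot--van Ness construction, the additional term $-o_{h,H}f^{h,H}$ in \eqref{mv-cov} is built from the same power functions with sign corrections and contributes only a bounded summand to $g^{H,h}$, handled by the same argument.

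It remains to verify Assumption \ref{asmp}. Part (i), $C_{H,h}\to 1$, follows by inserting the explicit value of $c^{d}_{H,h}$ collected in the Appendix and using $\Gamma(H)\sim 1/H$ as $H\downarrow 0$, whereby the Gamma singularities cancel the $(H+h)$ prefactor uniformly for small $H,h$. Part (ii) relies on the pointwise limit $(a^{H+h}-1)/(H+h)\to \log a$ together with the elementary bound
\begin{equation*}
\frac{|a^{H+h}-1|}{H+h}\;\le\; |\log_- a| + \frac{a^{2H_0}-1}{2H_0}, \qquad H+h\le 2H_0,
\end{equation*}
whose right-hand side is integrable against $\psi(\cdot,x)$ and $\psi(\cdot,x)\psi(\cdot,y)$ uniformly in $x,y\in D$ by conditions \eqref{psiConditionTwo}--\eqref{psiConditionFour}. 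Dominated convergence then yields uniform convergence of $g^{H,h}$ on $D\times D$ to the bounded function
\begin{align*}
g(x,y) &= \int \psi(u,x)\log\|u-y\|du + \int \psi(v,y)\log\|x-v\|dv \\
&\quad - \iint \psi(u,x)\psi(v,y)\log\|u-v\|du\,dv.
\end{align*}
The main technical obstacle is this last step: one must reconcile the logarithmic singularity $\log_-\|\cdot\|$ at short range with polynomial growth at infinity, uniformly in $H+h\in(0,2H_0]$ and in $x,y\in D$. The three integrability conditions in Definition \ref{def-ker} are engineered precisely to provide this uniform domination.
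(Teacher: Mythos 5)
Your proposal follows the paper's own route essentially step for step: the same four-term expansion of $\E(X^{H}(x)X^{h}(y))$, the same cancellation of the endpoint terms via \eqref{psiConditionOne}, the same constant $C_{H,h}=(H+h)\,c^{d}_{H,h}\sqrt{\Gamma(H)\Gamma(h)}$ with $C_{0,0}=1$ checked through the Gamma-function identities, and the same limiting function $g$ (your sign convention for $g^{H,h}$ is the one consistent with the stated limit). There are, however, two places where your argument as written does not deliver what Assumption \ref{asmp}(ii) requires. The first is the claim that dominated convergence ``yields uniform convergence of $g^{H,h}$ on $D\times D$.'' A dominating function that is uniform in $x,y$ gives, for each \emph{fixed} $(x,y)$, convergence of the three integrals; it does not by itself produce uniformity in the external parameters. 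What is needed is a quantitative bound on the discrepancy, namely
\begin{equation*}
0\;\le\;\log\frac{1}{r}-\frac{1-r^{s}}{s}\;\le\;\frac{s}{2}\log^{2}(r)\quad(0<r\le1),
\qquad
0\;\le\;\log\frac{1}{r}-\frac{1-r^{s}}{s}\;\le\;s\,(r-1-\log r)\quad(r>1),
\end{equation*}
which, integrated against $\psi$ using \eqref{psiConditionTwo}--\eqref{psiConditionFour}, gives the uniform rate $\sup_{x,y\in D}\abs{g(x,y)-g^{H,h}(x,y)}\le C(H+h)$. This is precisely the role of Lemma \ref{lem:log_approximation} in the paper; your elementary domination bound should be upgraded to this form (or replaced by an equicontinuity argument, which you do not supply).

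The second gap concerns the Mandelbrot--van Ness case. It is not enough that the extra term coming from $-o_{h,H}f^{h,H}$ in \eqref{mv-cov} ``contributes only a bounded summand to $g^{H,h}$'': since $o_{H,H}=0$, the limit $g$ contains no such contribution, so Assumption \ref{asmp}(ii) forces this summand to vanish \emph{uniformly} as $(H,h)\to0$, not merely to stay bounded. This requires the explicit trigonometric form of the constants, i.e.
$\abs{o_{H,h}/(b_{H,h}(H+h))}=\abs{\tan((h-H)\tfrac{\pi}{2})\tan((h+H)\tfrac{\pi}{2})}/(H+h)\le C(H+h)$,
combined with \eqref{psiConditionTwo}; without this estimate the off-diagonal case $h\neq H$ of the cross-covariance is not controlled. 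The rest of your argument (well-definedness via second moments, Gaussianity, the covariance computation, and part (i) of the Assumption) is sound and matches the paper.
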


\begin{remark}
From Remark \ref{rem-cov} and Theorem \ref{thm:fbm_normalization} it follows that the covariance of $X^{H}$ converges pointwise to the covariance of the LGF as $H \rr 0$. We recall the Lévy-continuity theorem for the weak convergence of probability measures on the space of tempered distributions (see Theorem 2.3 in \cite{bierme2017generalized}). According to this, in order to prove weak convergence of $X^{H}$ towards a LGF as $H$ tends to $0$, we need to show that for any $\phi_{1},\phi_{2}\in \mathcal S$
$$\lim_{H\to0}E[( X^{H}, \phi_{1} )( X^{H}, \phi_{2} )] =  \int_{\Rd}\int_{\Rd} K(x,y) \phi_{1}(x)\phi_{2}(y)\,dx\,dy, 
$$  
where 
$$
K(x,y)=\log\frac{1}{\|x-y\|} +g(x,y). 
$$

This was done for the one dimensional case and for a specific normalizing kernel $\psi$ in Theorem 2.1 of \cite{neuman2018fractional}.
Since the focus of this work is the convergence of the multiplicative chaos associated to $X^{H}$ we do not pursue this direction here.
\end{remark}

Next we give a few examples of normalizing kernels in $\mathcal N_{H_0}(D)$. We are mainly interested in normalizations that preserve one of the two characterizing properties of fractional Brownian fields: stationarity (and isotropy) of increments and self-similarity.

\textbf{Stationarity of increments:} we choose the normalizing kernel $\psi$ to be a convolution kernel. Let $\theta: \Rd \to \R_+$ be a measurable bounded function such that
\begin{align*}
\psi(x,y) = \theta(y-x), \quad\text{with}\quad\int_{\Rd}\theta(x)dx = 1.
\end{align*}
In this case the conditions in Definition \ref{def-ker} translate to conditions on $\theta$.

It is straight forward to check that the conditions of Definition \ref{def-ker} are satisfied for any positive $\theta \in \mathcal{S}$ with the domain $D$ being any bounded subset of $\Rd$.
Another interesting example is given by $\theta(x) = \abs{A}^{-1}\mathbf{1}_{A}(x)$
for any bounded set $A \subset \Rd$ where $D$ is again a bounded domain. Here $\abs{A}$ is the Lebesgue measure of the set $A$.
It is a simple exercise to show that in these examples $X^{H}$ in \eqref{def:XH} inherits the stationarity of the increments from $B^H$. Further, if $\theta$ is invariant under rotations, it is also straight forward to show that $X^{H}$ has isotropic increments.

A specific example that preservers stationarity of increments in $d=1$ is the following moving average normalization
\begin{align*}
B^{H}(t) - \frac{1}{\delta}\int_{t-\delta}^{t} B^{H}(u) du
\end{align*}
for any fixed $\delta>0$. Here the normalized process is adapted to the filtration generated by the fractional Brownian motion.
Note that this type of normalization does not preserve the self-similarity of $B^H$, as we discuss next.  

\textbf{Self-similarity:} using the self similarity property of the FBF, we get for any  $\psi \in \mathcal{N}_{H_0}(D)$ and $A \subset D$, $\lambda > 0$ with $\lambda A \subset D$ that 
\begin{align*}
    \left(B^{H}(\lambda x) - \!\int_{\Rd} B^{H}(u)\psi(u, \lambda x)du\right)_{x\in A} \!\overset{d}{=} \lambda^{H}\!\!\left( B^{H}(x) - \lambda\! 
    \int_{\Rd} B^{H}(u)\psi(\lambda u, \lambda x) du \right)_{x\in A}.
\end{align*}
By imposing $\psi(\lambda x, \lambda y) = \lambda^{-1} \psi(x, y)$, we see that the normalizations $X^H$ in \eqref{def:XH} preserve the self-similarity.
In particular, kernels of the form $\psi(x,y) = \norm{y}^{-1}\theta(x\norm{y}^{-1})$ satisfy this property, where the conditions on $\psi$ translate to conditions on ${\theta: \Rd \to \R_+}$. 
Note that the previous examples: $\theta \in \mathcal{S}$ and $\theta = |A|^{-1}\mathbf{1}_A$,
with $A\subset\Rd$ being bounded, apply also in this case, if the domain $D$ is any bounded subset of $\Rd$ that excludes a neighbourhood of zero.

A special case of the preceding example is the normalization proposed in \cite{neuman2018fractional} which is given in \eqref{nr-norm}. In this case $ D := [\delta, T]$ for some $0<\delta < T$ and  
\be \label{ker-nr}
\psi(t,u) = \frac{1}{t}\mathbf{1}_{[0,t]}(u),
\ee
Hence this normalization preserves self-similarity and also keeps $X^H$ adapted to the filtration generated by the fractional Brownian motion.
See Figure \ref{fig:simulations_nr_nrom} for realisations of the normalized Gaussian process with $\psi$ as in \eqref{ker-nr}. 

\begin{figure}
    \centering
    \includegraphics[width=0.8\textwidth]{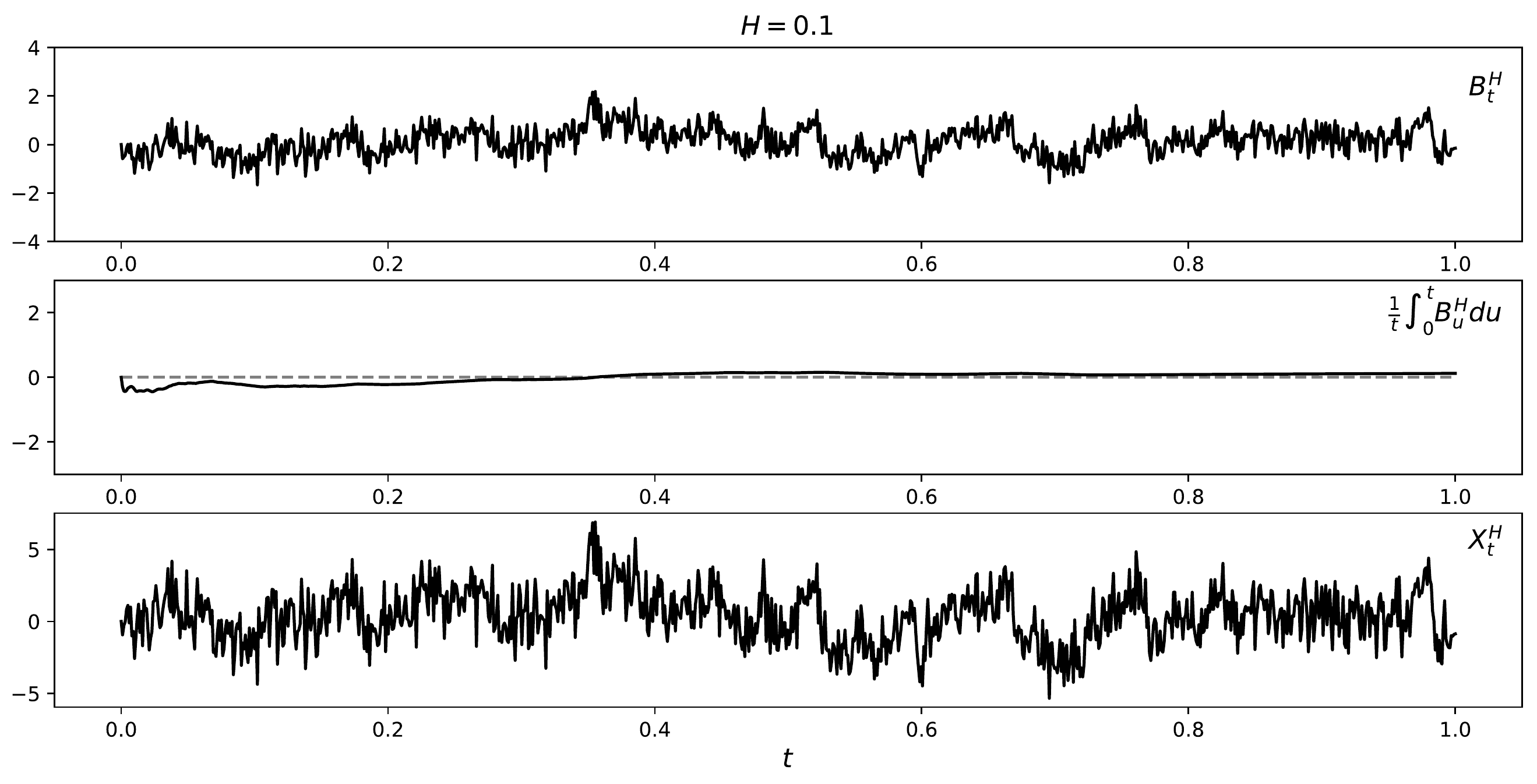}
    \includegraphics[width=0.8\textwidth]{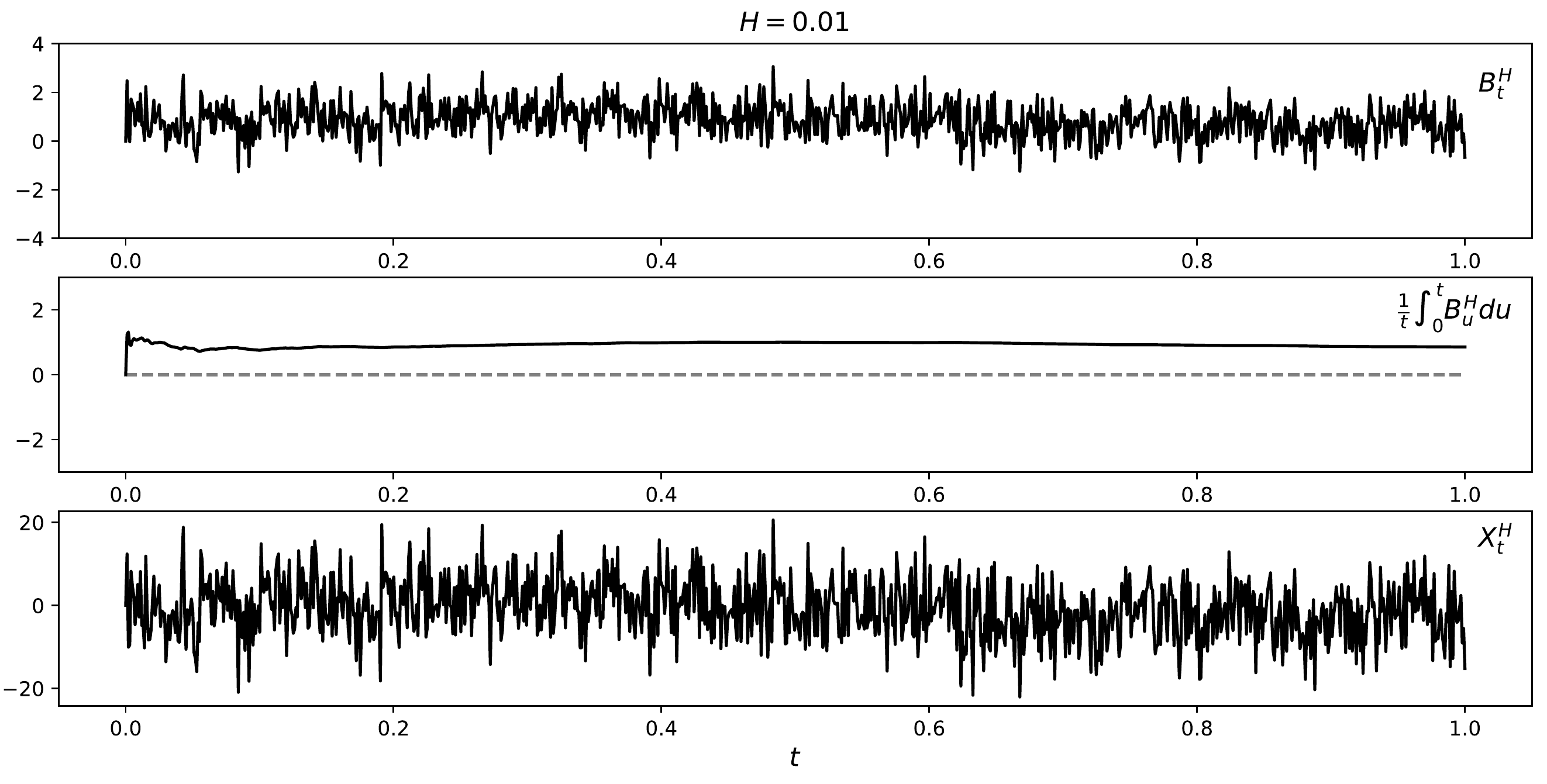}
    \caption{Two realisations of the normalized fractional Brownian motion \eqref{nr-norm} for Hurst parameters $H=0.1$ (upper panel) and $H=0.01$ (lower panel). In each panel the first realisation is of fractional Brownian motion, the second is the subtracted integral term and the third is the normalized fractional Brownian motion.}
    \label{fig:simulations_nr_nrom}
\end{figure}

Another example for such kernels is given by $\theta(x) = \abs{B_1(0)}^{-1} \mathbf{1}_{B_1(0)}(x)$ where $B_1(0)$ is the unit ball around $0$ and $D$ is any bounded domain in $\Rd$ that excludes a neighbourhood of zero. 

Finally, let us give an intuitive argument that explains why a normalization of FBFs is needed in order to establish convergence when $H$ tends to $0$. As we discusses in Section \ref{sec:introduction}, FBFs and LGFs are subsets of the class of fractional Gaussian fields $\{\textrm{FGF}_s(\re^d), \, s\in \re\}$.
For $H = s - d/2 \in [0,1)$, the distributions of these fields give full measure to (a representation of) the quotient space $\mathcal{S}_0^{'} = \mathcal{S}^{'}/\mathcal{S}_0$, where $\mathcal{S}_0\subset \mathcal S$ is the sub-space functions that integrate to zero.
In other words, these fields are defined as random tempered distributions \emph{modulo a constant}.
The convergence of the fields as $H \downarrow 0$ is a phase transition in the FGF-class.
For $H \in (0,1)$ the samples of $\mathrm{FGF}_s(\re^d)$ are tempered distributions $h \in \mathcal{S}_0^{'}$ which admit representations as continuous functions.
Fixing the undefined constants of these distributions by requiring an evaluation zero at the origin, i.e. requiring $\langle h, \delta_0\rangle = 0$, where $\delta_0$ is the Dirac distribution, gives up to re-scaling by a constant, the FBF with Hurst parameter $H$.
However, for $H=0$ the samples of $h\in\mathcal{S}^{'}_0$ of the log-correlated field $\mathrm{FGF}_0(\re^d)$ are not representable by continuous functions and testing $h$ against $\delta_0$ is not possible.
Therefore, requiring the FGF to be zero at the origin leads to a condition that is ill-defined in the $H\downarrow 0$ limit.
In order to obtain a meaningful limit, one has to loosen the latter condition in such a way that it can also be imposed on the LGF. This is precisely what the class for normalizations in Theorem \ref{thm:fbm_normalization} does in a general form.

Without going further into detail, a modification of the conditions on class normalizing kernels, to unbounded domains is possible.
We have seen however, that in order to obtain self-similarity, the domain of the normalized field has to exclude a neighbourhood of zero, which is clearly breaking the scale invariance of the domain and therefore the global self-similarity.
Intuitively this is explained by noting that the global self-similarity property in the $H\downarrow0$ limit corresponds to the scale-invariance of the field, which is indeed a characteristic property of the log-correlated field, however it only makes sense when understanding the field modulo constants.

\paragraph{Organisation of the paper:} 
The rest of this paper is dedicated to the proofs of the main results in Theorems \ref{thm-convergence} and \ref{thm:fbm_normalization}. 
In Section \ref{sec-ui} we prove uniform integrability for the family of measures $\{M^{H}_\gamma\}_{H \in (0, H_0)}$ from Theorem \ref{thm-convergence}. In Sections \ref{sec-pr-prop35} and \ref{sec-lem-p} we prove  Proposition \ref{prop-j1} and  Lemma \ref{lem-p-star}, respectively, which are essential ingredients for the proof of uniform integrability. 
In section \ref{section-conv} we use uniform integrability in order to prove the convergence of  $\{M^{H}_\gamma\}_{H \in (0, H_0)}$ as $H\downarrow 0$. 
Section \ref{corr-comp} is dedicated to the proof of Lemma \ref{lem:proof_fbf_covariance}. Finally  in Section \ref{sec:proof_normalization} we prove Theorem \ref{thm:fbm_normalization}.

\section{Uniform Integrability} \label{sec-ui}
In this section we show that the family of measures $\{M^{H}_\gamma\}_{H\in (0,H_0)}$ from  (\ref{approxMeasFBM}) are uniformly integrable.  The result is given in the following proposition. 

Let $\mathcal A$ be the class measurable subsets of $D$.

\begin{proposition} \label{prop:uniform_integrability}
For any $A\in \mathcal A$, $\big \{M^{H}_\gamma(A)\big\}_{H\in(0, H_0)}$ is uniformly integrable on $(\Omega, \F, P)$, for all $\gamma \leq \gamma^{*}(d)$, with $\gamma^{*}(d)>\sqrt{\frac{7}{4}d}$.
\end{proposition}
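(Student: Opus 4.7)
The natural approach is to decompose $M^{H}_\gamma(A)$ into its mass on the ``good points'' and its mass on the complement, and prove uniform integrability by combining an $L^1$-smallness estimate on the complement with an $L^2$ bound on the good part. Concretely, fix $\bar H \in (0, H_0)$ and let
\[
\mathcal{I}^{H,\bar H}_{1}(A) = M^{H}_\gamma\big(A \cap G^{H,\bar H}_\alpha\big), \qquad \mathcal{I}^{H,\bar H}_{2}(A) = M^{H}_\gamma\big(A \cap (G^{H,\bar H}_\alpha)^c\big),
\]
where $\alpha$ is chosen slightly larger than $\gamma$ (but strictly less than $\sqrt{2d}$). The sought uniform integrability will follow provided one can show: (i) given $\eps>0$, one may choose $\bar H$ so small that $\sup_{H<\bar H}\E[\mathcal{I}^{H,\bar H}_2(A)] < \eps$; and (ii) for each such $\bar H$, $\sup_{H<\bar H}\E[\mathcal{I}^{H,\bar H}_1(A)^2] < \infty$. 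Indeed, the truncation $\mathbf{1}_{\{M^{H}_\gamma(A)>N\}}$ can be split along the same decomposition, with the good part handled by Cauchy--Schwarz/Markov using (ii), and the bad part controlled uniformly by (i). The finitely many $H \geq \bar H$ are handled individually since for fixed $H$ the measure $M^{H}_\gamma(A)$ has finite moments of all orders from the Gaussian structure.

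\textbf{The complement estimate (i).} Using Fubini and a Cameron--Martin shift, one writes
\[
\E[\mathcal{I}^{H,\bar H}_2(A)] = \int_A \mathbb{P}^{*}_{x}\big( x \notin G^{H,\bar H}_\alpha \big)\, dx,
\]
where under $\mathbb{P}^{*}_x$ the field $X^H(\cdot)$ is shifted by $\gamma$ times its covariance kernel at $x$. The definition of the good set is precisely tailored so that under $\mathbb{P}^{*}_x$, the event $\{x \notin G^{H,\bar H}_\alpha\}$ is a moderately large deviation for a Gaussian increment, whose probability can be bounded (uniformly over $H<\bar H$) by a quantity tending to $0$ as $\bar H \downarrow 0$. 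This is where the single-point tail bound (the analog of Berestycki's one-point estimate) enters.

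\textbf{The $L^{2}$ bound (ii).} By Fubini and the Cameron--Martin formula,
\[
\E[\mathcal{I}^{H,\bar H}_1(A)^2] = \int_{A \times A} e^{\gamma^{2} K^H(x,y)}\, \mathbb{P}^{*}_{x,y}\big(x,y \in G^{H,\bar H}_\alpha\big)\, dx\, dy,
\]
where $K^{H}(x,y) = \E[X^H(x)X^H(y)]$ and $\mathbb{P}^{*}_{x,y}$ is the measure under which $X^{H}$ is shifted by $\gamma K^{H}(x,\cdot) + \gamma K^{H}(y,\cdot)$. Applying Assumption~\ref{asmp}, the exponential factor behaves like $\|x-y\|^{-\gamma^{2}}$ up to a bounded multiplicative correction, uniformly in $H < \bar H$. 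The crucial input is then the \emph{two-point} probability bound of Lemma~\ref{lem-p-star} (invoked via Proposition~\ref{prop-j1}), which controls $\mathbb{P}^{*}_{x,y}(x,y \in G^{H,\bar H}_\alpha)$ by a factor involving an additional power of $\|x-y\|$; combined with the singular weight, the integrand is integrable on $A \times A$ precisely when $\gamma \leq \gamma^{*}(d)$.

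\textbf{Main obstacle.} The hard part is the two-point probability estimate under the shifted measure. Under $\mathbb{P}^{*}_{x,y}$, both $X^H(x)$ and $X^H(y)$ have means of order $\gamma K^H(x,y) \sim \gamma \log(1/\|x-y\|)$, which lies well beyond the ``good'' bound $\alpha$. One must therefore estimate the probability that two correlated Gaussians, each with large shifted mean, simultaneously remain in a narrow band dictated by the good-point definition. Unlike Berestycki's mollified log-correlated setup, where the field at scale $e^{-t}$ behaves like Brownian motion with covariance $s\wedge t + O(1)$, here the long-range fractional structure prevents such a decomposition, forcing one to work directly with the bivariate Gaussian law of $(X^H(x), X^H(y))$ and its conditional structure. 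Controlling this jointly, with sufficient decay in $\|x-y\|$ to absorb the singular factor $\|x-y\|^{-\gamma^2}$, is precisely what constrains $\gamma$ to $\gamma^{*}(d)>\sqrt{7d/4}$ rather than the full subcritical range $\sqrt{2d}$.
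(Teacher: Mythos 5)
Your proposal follows essentially the same route as the paper: the same good-point/bad-point decomposition $M^{H}_\gamma(A)=I^{H,\bar H}_{\alpha,\gamma}(A)+L^{H,\bar H}_{\alpha,\gamma}(A)$, the same Cameron--Martin plus one-point tail bound for the complement (the paper's Lemma \ref{le:GoodPointsEnough} and Corollary \ref{corr-supp}), and the same two-point estimate via Proposition \ref{prop-j1} and Lemma \ref{lem-p-star} for the $L^2$ bound on the good part, yielding the same threshold $\gamma^{*}(d)$. The only slip is your remark that ``the finitely many $H\ge\bar H$ are handled individually'': the index set $[\bar H,H_0)$ is a continuum, so one needs a bound uniform over it (the paper supplies an explicit uniform $L^2$ estimate there), though this is a trivially repaired point.
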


\noindent The main idea in the proof of Proposition \ref{prop:uniform_integrability}, is to restrict the limiting measure to so called 
\emph{good points}, that is points $x \in A$ in which the field does not deviate too much from its mean. To be more precise, let $\ol H \in (0,H_0)$. We define the event of $x$ being a good point of order $\alpha>0$ by
\begin{align} \label{g-set} 
G^{H,\ol H}_\alpha(x) &= \left\{ X^{h}(x) \;\le\; \frac{\alpha}{h+H}, \foralll h \in S_{H,\ol H} \right\},
\end{align}
where $H \in(0,\ol H/2)$ and we define the following grid of $h$'s by
\be \label{s-set} 
S_{H, \ol H} = \left\{h: h = H + \frac{1}{n}, \; n\in\mathbb{N},\; \frac{1}{\ol H - H} < n \le \frac{1}{H}\right\}. 
 \ee
 
Before we prove Proposition \ref{prop:uniform_integrability}, we introduce a sequence of auxiliary lemmas. 
The following two lemmas will motivate the restriction of the random field $X^H$ to the good points.  

\begin{lemma} \label{ordinarilyGood} For any $\alpha > 0$ and $\ol H \in (0, H_0)$, there exists $p^{\ol H}_\alpha > 0$ such that 
$$
P\big(G^{H,\ol H}_\alpha(x)\big) \ge 1 - p^{\ol H}_\alpha, \quad   \foralll x\in D,  \ 0< H \le \frac{1}{2}\ol H.
$$ 
Moreover, $p^{\ol H}_\alpha \to 0$ as $\ol H \to 0$.
\end{lemma}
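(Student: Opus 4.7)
The plan is to bound $P\bigl(G^{H,\ol H}_\alpha(x)^c\bigr)$ via a union bound over $h\in S_{H,\ol H}$ of one-point Gaussian tail estimates, and then to show that the resulting sum collapses to a geometric series whose size depends only on $\alpha$ and $\ol H$. Since $X^h(x)$ is centred Gaussian, setting $H=h$ and $y=x$ in \eqref{eq:XHCovariance} gives $\sigma_h^2:=\E[X^h(x)^2]=C_{h,h}\bigl(1/(2h)+g^{h,h}(x,x)\bigr)$. By Assumption \ref{asmp} and the boundedness of $g^{H,h}$, for any $\delta>0$ there exists $\ol H_\delta\in(0,H_0)$ such that $2h\sigma_h^2\le 1+2\delta$ uniformly in $h\in(0,\ol H_\delta)$ and $x\in D$. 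The standard Gaussian tail estimate then yields
\begin{align*}
P\bigl(X^h(x)>\alpha/(h+H)\bigr)\;\le\;\exp\bigl(-\alpha^2/(2(h+H)^2\sigma_h^2)\bigr)\;\le\;\exp\bigl(-(1-3\delta)\alpha^2\,h/(h+H)^2\bigr).
\end{align*}

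Next I would parametrize $h=H+1/n$ with $n\in S^*_{H,\ol H}:=\mathbb{N}\cap(1/(\ol H-H),\,1/H]$, and set $\beta_n:=Hn$ and $\phi(\beta):=\beta(\beta+1)/(2\beta+1)^2$. A direct computation gives $h/(h+H)^2=\phi(\beta_n)/H$. The crucial analytic observation is $\phi'(\beta)=1/(2\beta+1)^3$, so $\phi'\ge 1/27$ on $[0,1]$. Hence for consecutive $n,n+1\in S^*_{H,\ol H}$ (which satisfy $\beta_{n+1}-\beta_n=H$ and $\beta_n,\beta_{n+1}\in[0,1]$), the exponents $E_n:=(1-3\delta)\alpha^2\phi(\beta_n)/H$ obey $E_{n+1}-E_n\ge(1-3\delta)\alpha^2/27$. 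Anchoring the union bound at $n_0:=\min S^*_{H,\ol H}$ and summing a geometric series yields
\begin{align*}
P\bigl(G^{H,\ol H}_\alpha(x)^c\bigr)\;\le\;\sum_{n\in S^*_{H,\ol H}} e^{-E_n}\;\le\;\frac{e^{-E_{n_0}}}{1-e^{-(1-3\delta)\alpha^2/27}}.
\end{align*}

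To finish, I would note that $(\beta+1)/(2\beta+1)^2$ is decreasing on $[0,1]$ with minimum value $2/9$ at $\beta=1$, so $\phi(\beta)\ge 2\beta/9$ on $[0,1]$. Since $H\le \ol H/2$ implies $\beta_{n_0}=Hn_0\ge H/(\ol H-H)\ge H/\ol H$, we conclude $E_{n_0}\ge (2/9)(1-3\delta)\alpha^2/\ol H$. Defining $p^{\ol H}_\alpha$ to be the resulting right-hand side—namely $\bigl(1-e^{-(1-3\delta)\alpha^2/27}\bigr)^{-1}\exp\bigl(-(2/9)(1-3\delta)\alpha^2/\ol H\bigr)$—produces a bound that depends only on $\alpha$ and $\ol H$, dominates $P\bigl(G^{H,\ol H}_\alpha(x)^c\bigr)$ uniformly in $x\in D$ and $H\le \ol H/2$, and tends to $0$ as $\ol H\to 0$.

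The main obstacle, and the reason a naïve union bound is not enough, is that $|S_{H,\ol H}|$ grows like $1/H$ as $H\downarrow 0$, while the worst-case single-point Gaussian tail bound is only $e^{-c\alpha^2/\ol H}$; a crude multiplication by $|S_{H,\ol H}|$ would produce a factor $1/H$ that blows up for fixed $\ol H$. The step-size argument based on the uniform lower bound $\phi'\ge 1/27$ is what replaces this combinatorial factor by the harmless constant $1/\bigl(1-e^{-(1-3\delta)\alpha^2/27}\bigr)$ and leaves a clean $\ol H$-dependence.
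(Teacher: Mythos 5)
Your proof is correct and follows essentially the same route as the paper's: a union bound over the grid $S_{H,\ol H}$, one-point Gaussian tail estimates based on the variance bound $\E[X^h(x)^2]\approx 1/(2h)$, and a reduction of the resulting sum to a geometric series anchored at $n_0\sim 1/\ol H$. The only difference is the device used to obtain the geometric decay: the paper iteratively replaces $m$ by $m-1$ in the exponent $-\beta/(1/m+1/n)$ to peel off terms of size $e^{-\beta n/2}$, whereas you reparametrize via $\phi(\beta)=\beta(\beta+1)/(2\beta+1)^2$ and use the uniform bound $\phi'\ge 1/27$ to show consecutive exponents increase by a fixed constant; both yield the same conclusion.
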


\begin{proof}
We will bound the probability of the event $G_\alpha^{H, \ol H}(x)$ from below by bounding the probability of the complementary event $G^{H,\ol H}_\alpha(x)^c$ from above as follows
\be \label{eq-r1} 
\begin{aligned}
P\big(G^{H,\ol H}_\alpha(x)^c \big) 
& = P\left(\exists \, h \in S_{H, \ol H}, \ \text{ s.t. }\; X^{h}(x) > \frac{\alpha}{h+H} \right) \\
&\less \sum_{h \in S_{H, \ol H}}P\left(X^{h}(x) > \frac{\alpha}{h+H}\right) .
\end{aligned}
\ee
From (\ref{eq:XHCovariance})--(\ref{eq:gEstimate}) we get
\be \label{cov-bnd} 
0 < E(X^{h}(x)^{2}) \le (1+c_1) \Big( \frac{1}{2h} + c_2\Big), \quad \textrm{ for every } x\in D, \, h\in(0,H_0), 
\ee
for some constants $c_1,c_2 > 0$.  

We will use the following tail estimate for a random variable $Z$, which is a centred Gaussian with variance $\sigma^2$, \be \label{z-est} 
P(Z >x) \leq e^{-\frac{x^{2}}{2\sigma^2}}, \quad \textrm{for all } x>0. 
\ee
Let $h=H+1/n$ as in \eqref{s-set}. Use \eqref{cov-bnd} and  (\ref{z-est}) to get
\be \label{peq1} 
\begin{aligned}
P\left(X^{h}(x) > \frac{\alpha}{h+H} \right) 
\le & \exp\left(\!\!-\frac{\alpha^{2}\big(2H+\frac{1}{n}\big)^{-2}}{2  \E\big( X^{H + 1/n}(x)^{2} \big)}\!\right) \\
\le & \exp\left(\!\!-\frac{\alpha^{2}}{2(c_1 + 1)}\frac{\big(2H+\frac{1}{n}\big)^{-2}}{\big(H+\frac{1}{n}\big)^{-1} + c_2}\right). 
\end{aligned}
\ee
Note that 
\bn
\frac{\big(2H+1/n\big)^{-2}}{\big(H+1/n\big)^{-1} + c_2}&=& \frac{1}{(2H+1/n)^{2}} \frac{H+1/n}{1+c_2(H+1/n)} \\
&=&  \frac{1}{2(2H+1/n)^{2}} \frac{2H+2/n}{1+c_2(H+1/n)} \\
&\ge&   \frac{1}{2(2H+1/n)} \frac{1}{1+c_2(H+1/n)} \\ 
&\geq&   \frac{1}{4(H+1/n)} \frac{1}{1+c_2},
\en
where we used \eqref{s-set}  and the fact that $H+1/n \leq \ol H  \leq H_0 < 1$ in the last line. 

For any $x>0$ define $[x]$  to be the largest integer less than or equal to $x$. Denote $m=\left[\frac{1}{H}\right]$. Since $m\leq \frac{1}{H}$ we get
\bd
  H+\frac{1}{n} \leq \frac{1}{m} +\frac{1}{n}, 
\ed
and it follows that 
\be \label{peq2} 
\frac{\big(2H+1/n\big)^{-2}}{\big(H+1/n\big)^{-1} + c_2}  \geq   \frac{1}{4(\frac{1}{m}+\frac{1}{n})} \frac{1}{1+c_2}. 
\ee
From \eqref{peq1} and \eqref{peq2} we get that
\be \label{r12} 
P\left(X^{h}(x) > \frac{\alpha}{h+H} \right) \leq  \exp\left(-\frac{\alpha^{2}}{8(c_2+1)(c_1 + 1)}\frac{1}{\frac{1}{m}+\frac{1}{n}}\right).
\ee
Define $\beta = \frac{\alpha^{2}}{8(c_1+1)(c_2 +1)} >0$. Then from \eqref{eq-r1} and (\ref{r12}) we get that  
\begin{align*}
P\big(G^{H,\ol H}_\alpha(x)^c \big) &\le \sum_{n=[1/\ol H] -1}^{m}  \exp\left(-\frac{\beta}{\frac{1}{m}+\frac{1}{n}}\right) \\
&= \exp\left(-\frac{\beta}{2}m\right)+ \sum_{n=[1/\ol H] -1}^{m-1}  \exp\left(-\frac{\beta}{\frac{1}{m}+\frac{1}{n}}\right) \\
&\leq \exp\left(-\frac{\beta}{2}m\right) +\sum_{n=[1/\ol H] -1}^{m-1}  \exp\left(-\frac{\beta}{\frac{1}{m-1}+\frac{1}{n}}\right). 
\end{align*}
By iterating the preceding inequality we get
\begin{align*}
P\big(G^{H,\ol H}_\alpha(x)^c \big) 
&\le \sum_{n=[1/\ol H] -1}^{m} \exp\left(-\frac{\beta}{2}n\right) \\
&< \sum_{n=[1/\ol H] -1}^{\infty} \exp\left(-\frac{\beta}{2}n\right) =: p^{\ol H}_\alpha.
\end{align*}
It is clear then that $p^{\ol H}_\alpha \rr 0$ as $\ol H \rr 0$. 
\end{proof}

Next, introduce additional  definitions that will be used throughout this section. 

For a centred Gaussian random variable $\xi$ we define, 
$$
 \overline{\xi}=  \xi -\frac{1}{2}E(\xi^2).
$$
Further we define the measure $\wt {P}$ by 
\be \label{lr} 
\frac{d\wt P}{dP} = e^{\gamma\overline{ X^{H}}(x)}.
\ee

\begin{lemma}  \label{le:GoodPointsEnough} Let $\alpha > \gamma$. For any $\eps  \in (0,\alpha/\gamma - 1) $ there exists $\ol H>0$ sufficiently small such that 
$$
E\big[ e^{\gamma\overline{ X^{H}}(x)} \mathbf{1}_{G^{H,\ol H}_\alpha(x)} \big] \geq 1- p^{\ol H}_{\alpha-\gamma(1+\eps)}, \quad   \foralll x\in D,  \ 0< H \le \frac{1}{2}\ol H, 
$$
where $p^{\ol H}_{\alpha-\gamma(1+\eps)}$ is given in Lemma \ref{ordinarilyGood}.
\end{lemma}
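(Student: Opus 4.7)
The natural starting point is to rewrite the expectation as the mass of the good-point event under the tilted measure: by the definition of $\wt P$ in \eqref{lr},
\begin{align*}
E\big[e^{\gamma \ol{X^H}(x)} \mathbf{1}_{G^{H,\ol H}_\alpha(x)}\big] = \wt P\big(G^{H,\ol H}_\alpha(x)\big).
\end{align*}
The plan is then to invoke a Cameron--Martin / Gaussian-shift identity. Since $X^H(x)$ together with the finite family $(X^h(x))_{h \in S_{H,\ol H}}$ form a jointly centred Gaussian vector, the tilt by $e^{\gamma \ol{X^H}(x)}$ preserves the covariance structure of $(X^h(x))_{h\in S_{H,\ol H}}$ but shifts each mean by $\gamma\,\mathrm{Cov}(X^h(x), X^H(x))$. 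Consequently,
\begin{align*}
\wt P\big(G^{H,\ol H}_\alpha(x)\big) = P\Big(X^h(x) \le \tfrac{\alpha}{h+H} - \gamma\,\mathrm{Cov}(X^h(x), X^H(x)) \foralll h \in S_{H, \ol H}\Big).
\end{align*}

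The next step is to estimate the shift explicitly. Setting $y = x$ in \eqref{eq:XHCovariance} gives
\begin{align*}
\mathrm{Cov}(X^h(x), X^H(x)) = \frac{C_{H,h}}{h+H} + C_{H,h}\,g^{H,h}(x,x).
\end{align*}
Fix $\eps \in (0, \alpha/\gamma - 1)$. By Assumption \ref{asmp}, $\sup_{H,h < \ol H}|C_{H,h} - 1| \to 0$ and $\sup_{H,h<\ol H} \sup_{x \in D} |g^{H,h}(x,x)|$ is bounded; moreover $h + H \le \ol H$ whenever $h \in S_{H, \ol H}$. Hence for $\ol H$ sufficiently small, uniformly in $x \in D$ and $h\in S_{H,\ol H}$,
\begin{align*}
\gamma\,\mathrm{Cov}(X^h(x), X^H(x)) \le \frac{\gamma(1+\eps)}{h+H}.
\end{align*}

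Substituting this bound yields the event inclusion
\begin{align*}
G^{H,\ol H}_{\alpha - \gamma(1+\eps)}(x) \subseteq \Big\{X^h(x) \le \tfrac{\alpha}{h+H} - \gamma\,\mathrm{Cov}(X^h(x), X^H(x)) \foralll h \in S_{H, \ol H}\Big\},
\end{align*}
which is non-trivial since $\alpha - \gamma(1+\eps) > 0$ by the choice of $\eps$. Combining this with the previous displays and applying Lemma \ref{ordinarilyGood} with reduced parameter $\alpha - \gamma(1+\eps)$ delivers the lower bound $1 - p^{\ol H}_{\alpha - \gamma(1+\eps)}$. The main subtlety is the covariance bound: one must simultaneously absorb the multiplicative deviation $C_{H,h} - 1$ and the additive bounded term $C_{H,h}\,g^{H,h}(x,x)$ into a single factor of the form $(1+\eps)/(h+H)$. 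This works precisely because $h+H\le\ol H$ is uniformly small on $S_{H,\ol H}$, so the bounded $g$-term is negligible compared to $\eps/(h+H)$; any attempt to apply the shift outside this small-$\ol H$ regime would fail.
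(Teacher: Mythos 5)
Your proposal is correct and follows essentially the same route as the paper's proof: rewrite the expectation as $\wt P\big(G^{H,\ol H}_\alpha(x)\big)$, apply the Cameron--Martin shift $\gamma\,E[X^{h}(x)X^{H}(x)]$, bound this shift by $\gamma(1+\eps)/(h+H)$ uniformly for $h\in S_{H,\ol H}$ using \eqref{c-h} and \eqref{eq:gEstimate}, and conclude via the event inclusion and Lemma \ref{ordinarilyGood} with parameter $\alpha-\gamma(1+\eps)$. The only cosmetic difference is that you write the covariance exactly by setting $y=x$ in \eqref{eq:XHCovariance} before bounding, whereas the paper bounds it directly; the argument is the same.
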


\begin{proof}
By Cameron-Martin-Girsanov theorem under the measure $\wt{P}$, the Gaussian process $(X^{h}(x))_{h\in(0,1)}$ has similar variance as under $P$ and a shifted mean which is bounded by
\be \label{cov-h-H} 
\gamma E\big[X^{h}(x)X^{H}(x)\big]\leq \gamma C_{H, h}\left( \frac{1}{H+h} +c\right), \quad \textrm{for all } x\in D, 
\ee
where $c>0$ is a constant independent of $x$ and $H$. Note that we have used \eqref{eq:XHCovariance} and \eqref{eq:gEstimate} in the above inequality. 
 
Let $0<\eps <\alpha/\gamma - 1$.  Recall that for $h\in S_{H, \ol H} $ we have $2H \leq h \leq \ol H$, then using (\ref{c-h}) we get that for $\ol H$ small enough that
$$
\gamma E\big[X^{h}(x)X^{H}(x)\big] \leq \gamma (1+\eps) \frac{1}{h+H}, \quad \textrm{for all } x\in D,  \ h\in S_{H, \ol H}.
$$
  
Using the above inequality, \eqref{g-set} and \eqref{lr} we get that
\begin{align*}
&E\big[e^{\gamma\overline{ X^{H}}(x)} \mathbf{1}_{\{G^{H,\ol H}_\alpha(x)\}} \big]  \\
&=\wt P\big(G^{H,\ol H}_\alpha(x)\big) \\
&= P\Big(X^{h}(x) \leq \frac{\alpha}{h+H}  -\gamma \E\big(X^{h}(x)X^{H}(x)\big) ,\ \forall \, h \in  S_{H, \ol H}  \Big) \\
& \geq  P\Big(X^{h}(x) \;\le\; \alpha\,\frac{1}{h+H} - \gamma (1+\eps)\frac{1}{h+H}  ,\ \forall \, h \in  S_{H, \ol H} \Big) \\
&= P\big(G^{H,\ol H}_{\alpha - \gamma(1+\eps)}(x)\big).
\end{align*}
We can thus conclude the result from Lemma \ref{ordinarilyGood}.
\end{proof} 

Recall that $\mathcal A$ is the class measurable subsets of $D$. For any $0<\ol H<H_0$, $H\in(0,\ol H/2)$ and $\alpha > \gamma$ we define the random measure $I^{H,\ol H}_{\alpha,\gamma}(\cdot) $ as follows
\begin{align}\label{eq:def_I}
I^{H,\ol H}_{\alpha,\gamma}(A)=  \int_A  e^{\gamma X^{H}(x) - \frac{\gamma^{2}}{2}E[X^{H}(x)^{2}] } \mathbf{1}_{G^{H, \ol H}_\alpha(x)} dx, \quad A\in \mathcal A. 
\end{align}
Note that $I^{H,\ol H}_{\alpha,\gamma}(\cdot)$ is the approximating measure $M^{H}_\gamma$ in \eqref{approxMeasFBM}, restricted to good points. In the following proposition we derive a uniform bound on the second moment of $I^{H,\ol H}_{\alpha,\gamma}$.

\begin{proposition}\label{prop:I_L2_boundedness} There exists $\gamma^*(d) > \sqrt{\frac{7}{4}d}$ such that for any $\gamma \in (0,\gamma^*(d))$ and $\alpha > \gamma$ sufficiently close to $\gamma$, there exists $\ol H >0$ sufficiently small such that 
$$
\sup_{0<H \leq \ol H/2}\sup_{A\in \mathcal A}E\big[I^{H,\ol H}_{\alpha,\gamma}(A)^{2} \big] < \infty. 
$$
\end{proposition}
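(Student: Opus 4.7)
The plan is to expand $E[I^{H,\ol H}_{\alpha,\gamma}(A)^{2}]$ using Fubini's theorem and then reduce the inner expectation via a two-point Cameron--Martin--Girsanov shift. Writing
\begin{equation*}
E\big[I^{H,\ol H}_{\alpha,\gamma}(A)^{2}\big] = \int_{A}\int_{A} E\Big[e^{\gamma(X^{H}(x)+X^{H}(y)) - \tfrac{\gamma^{2}}{2}(E[X^{H}(x)^{2}]+E[X^{H}(y)^{2}])}\mathbf{1}_{G^{H,\ol H}_\alpha(x)\cap G^{H,\ol H}_\alpha(y)}\Big]\,dxdy,
\end{equation*}
and setting $Z_{x,y}:=e^{\gamma^{2} E[X^{H}(x)X^{H}(y)]}$, a direct Gaussian computation shows the integrand equals $Z_{x,y}\cdot \wt P_{x,y}\big(G^{H,\ol H}_\alpha(x)\cap G^{H,\ol H}_\alpha(y)\big)$, where $\wt P_{x,y}$ denotes the probability measure with Radon--Nikodym derivative $Z_{x,y}^{-1}\exp(\gamma X^{H}(x)+\gamma X^{H}(y) - \tfrac{\gamma^2}{2}(E[X^H(x)^2]+E[X^H(y)^2]))$ with respect to $P$. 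Under $\wt P_{x,y}$, the family $(X^{h}(z))$ remains Gaussian with unchanged covariance, and its mean shifts by $\gamma E[X^{h}(z)X^{H}(x)] + \gamma E[X^{h}(z)X^{H}(y)]$, in analogy with the one-point shift already used in Lemma \ref{le:GoodPointsEnough}.

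Next I would control the covariance factor. From \eqref{eq:XHCovariance} together with the elementary inequality $1 - r^{2H} \leq -2H \log r$, valid for $r\in(0,1]$, and \eqref{c-h}--\eqref{eq:gEstimate}, one obtains a bound of the form
\begin{equation*}
e^{\gamma^{2} E[X^{H}(x)X^{H}(y)]} \;\leq\; C\,\norm{x-y}^{-\gamma^{2} C_{H,H}},
\end{equation*}
uniformly in $x,y\in D$ and $H\in(0,\ol H/2)$. Since $C_{H,H}\to 1$ as $\ol H\to 0$, this is essentially the classical $\norm{x-y}^{-\gamma^{2}}$ singularity on the diagonal. On the complementary region where $\norm{x-y}$ is bounded below, the covariance is bounded and integration over this portion of $A\times A$ trivially contributes a finite amount, so the only difficulty is the diagonal.

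The crux of the argument is to establish a polynomial decay of the two-point probability $\wt P_{x,y}\big(G^{H,\ol H}_\alpha(x)\cap G^{H,\ol H}_\alpha(y)\big)$ in $\norm{x-y}$. Under $\wt P_{x,y}$, the shifted mean of $X^{h}(x)$ equals approximately $\gamma C_{H,h}(2 - \norm{x-y}^{H+h})/(H+h)$, and similarly for $X^h(y)$. For $h\in S_{H,\ol H}$ at the critical scale $h+H\sim 1/\log(1/\norm{x-y})$, this mean is close to $2\gamma/(H+h)$, substantially above the good-point threshold $\alpha/(H+h)$ as long as $\alpha$ is chosen close enough to $\gamma$ (so that $\alpha < 2\gamma$). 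A Gaussian tail estimate applied simultaneously at a suitably chosen scale in $S_{H,\ol H}$, taking into account the cross-shift at both $x$ and $y$, produces a factor of the form $\norm{x-y}^{\eta(\gamma,\alpha,d)}$. Combined with the covariance bound this yields an integrand $\norm{x-y}^{\eta - \gamma^{2}}$, which is locally integrable on $A\times A\subset \re^{2d}$ whenever $\gamma^{2} - \eta < d$. Optimization of $\eta$ as $\alpha\downarrow \gamma$, together with the discrete scale structure of $S_{H,\ol H}$, gives the quantitative threshold $\gamma^{*}(d) > \sqrt{7d/4}$.

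The main obstacle is precisely the two-point probability bound, which requires a careful multi-scale analysis: one has to pick an element of $S_{H,\ol H}$ whose inverse is comparable to $\log(1/\norm{x-y})$ and simultaneously account for the shifts coming from $X^{H}(x)$ and $X^{H}(y)$. This is the reason for the sharper estimates developed later in the paper (cf.\ the estimates indicated at \eqref{p-med} and Lemma \ref{lem-p-star}). The gap to the optimal Berestycki threshold $\sqrt{2d}$ arises because, in contrast to mollified log-correlated fields, the family $\{X^{H}\}$ does not enjoy an approximate Markov property on disjoint scales, and the long-range covariance of the fractional Brownian field forces a loss in the tail estimate at the critical scale.
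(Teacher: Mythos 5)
Your overall strategy coincides with the paper's: Fubini plus the two-point Cameron--Martin shift \eqref{ol-p}, a bound on the covariance factor, and a polynomial gain in $\norm{x-y}$ extracted from the two-point probability of $G^{H,\ol H}_\alpha(x)\cap G^{H,\ol H}_\alpha(y)$ at a grid scale $h^*\sim \kappa^*/\log(1/\norm{x-y})$. There is, however, a genuine gap in your treatment of the diagonal. The gain $\norm{x-y}^{\eta}$ from the good points is only available when the critical scale lies inside the grid $S_{H,\ol H}$, i.e.\ roughly when $\norm{x-y}\ge e^{-\kappa^{*}/H}$; below that threshold the required $h^{*}$ would fall beneath the bottom of the grid, the Cameron--Martin shift $\gamma(2-\norm{x-y}^{h+H})/(h+H)$ saturates, and $\ol P(G^{H,\ol H}_\alpha(x)\cap G^{H,\ol H}_\alpha(y))$ is bounded below by a positive ($H$-dependent) constant, so no further polynomial decay in $\norm{x-y}$ is possible. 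On that innermost region your bound degenerates to $\norm{x-y}^{-\gamma^{2}}$, and $\iint_{\norm{x-y}<\eps}\norm{x-y}^{-\gamma^{2}}dx\,dy=\infty$ for $\gamma^{2}\ge d$, so your chain of inequalities fails exactly in the range $d\le\gamma^{2}<\tfrac{7}{4}d$ that the proposition is about. The fix, implemented in the paper as the region $R_1$ of \eqref{region} and the bound \eqref{j-1-b}, is to \emph{not} pass to the logarithmic bound there: the covariance saturates, $\exp\big(C_{H,H}\gamma^{2}\tfrac{1-\norm{x-y}^{2H}}{2H}\big)\le \exp\big((1+\dl)\gamma^{2}/(2H)\big)$, while the volume of $\{\norm{x-y}<e^{-\kappa^{*}/H}\}$ is of order $e^{-\kappa^{*}d/H}$, which wins whenever $\gamma^{2}<2\kappa^{*}d$. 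Your uniform replacement of the covariance factor by $C\norm{x-y}^{-\gamma^{2}C_{H,H}}$ discards precisely this saturation.

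A second, quantitative caveat: the heart of the matter --- that the two-point probability decays like $\norm{x-y}^{\eta}$ with $\eta$ large enough to reach $\gamma^{*}(d)>\sqrt{7d/4}$ --- is asserted rather than derived. To obtain the stated threshold one needs a genuinely bivariate Gaussian estimate (the paper uses Savage's inequality, Theorem \ref{thm-sav}), which retains the correlation factor $2/(2-\norm{x-y}^{2h^{*}})$ in the exponent of Lemma \ref{lem-p-star}; applying a univariate tail bound to $X^{h^{*}}(x)$ and $X^{h^{*}}(y)$ separately, as your phrase ``a Gaussian tail estimate applied simultaneously'' suggests, loses this factor and yields a strictly smaller exponent, hence a smaller $\gamma^{*}$. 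The value $\gamma^{*}(d)=\sqrt{d/(1-\ol\xi)}$ with $\ol\xi=f(\kappa^{*})\approx 0.4287$ comes from optimizing $\kappa^{*}$ in that bivariate bound (see \eqref{must}--\eqref{xi}), and none of this optimization appears in your sketch.
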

\begin{proof}
Let $A \in \mathcal A$. For any $H\in(0,H_0)$ define the probability measure $\ol P$ by 
\be \label{ol-p} 
\frac{d \ol {P}}{dP}=e^{\gamma X^{H}(x) + \gamma X^{H}(y) - \frac{\gamma^{2}}{2}E[X^{H}(x) + X^{H}(y)]^{2}}.
\ee
From (\ref{eq:XHCovariance}), (\ref{eq:gEstimate}), \eqref{ol-p} and Fubini's theorem we get
\be
\begin{aligned}\label{IHL2Bound}
&E\left[I^{H,\ol H}_{\alpha,\gamma}(A)^{2} \right]\\
&= \int_A\int_A E\left[ e^{\gamma X^{H}(x) + \gamma X^{H}(y) - \frac{\gamma^{2}}{2}E[X^{H}(x)^{2}] - \frac{\gamma^{2}}{2}E[X^{H}(y)^{2}]} \mathbf{1}_{\{G^{H}_\alpha(x) \cap G^{H}_\alpha(y)\}} \right] dx dy \\
&= \int_A\int_A e^{\gamma^{2}E\left[X^{H}(x)X^{H}(y)\right]}\; \ol{P}(G^{H}_\alpha(x) \cap G^{H}_\alpha(y)) dx dy \\
&= \int_A\int_A \exp\Big(C_{H,H}\gamma^{2}\frac{1-\norm{x-y}^{2H}}{2H} + g^{2H}(x,y)\Big)\,  \ol P(G^{H}_\alpha(x) \cap G^{H}_\alpha(y)) dx dy \\
&\leq K \int_A\int_A \exp\Big(C_{H,H} \gamma^{2}\frac{1-\norm{x-y}^{2H}}{2H}\Big)\,  \ol P(G^{H}_\alpha(x) \cap G^{H}_\alpha(y)) dx dy.
\end{aligned}
\ee
Our goal is to bound $E\big[I^{H,\ol H}_{\alpha,\gamma}(A)^{2} \big]$ uniformly in $H$ and $A$. In order to do that, we split the integral on the right-hand side of (\ref{IHL2Bound}) to four regions. Let $\kappa^{*} \geq 1$ be a constant that will be fixed later. 

Define: 
\begin{equation} \label{region} 
\begin{aligned} 
R_1:= &\{(x,y) \in D \times D: \norm{x-y}  < e^{-\kappa^{*}/H}\}; \\
R_2:= &\{(x,y) \in D \times D: e^{-\kappa^{*}/H} \leq \norm{x-y}  < e^{-2/\overline{H}} \};
\\ 
R_3:=&\{(x,y) \in D \times D: e^{-2/\overline{H}} \le  \norm{x-y} <  1\}; \\
R_4:=&\{(x,y) \in D \times D:  1\le  \norm{x-y} \big \}. 
\end{aligned}
\end{equation} 
Note that since $H/\ol H \in (0,1/2)$ and $\kappa^{*} \geq 1$, $R_i$, $i=0,1,2,3$ are disjoint and non-empty.

We further define 
\be \label{j-i}
J_i(H,\overline{H},A)  = \iint_{R_i} \exp\Big(C_{H,H} \gamma^{2}\frac{1-\norm{x-y}^{2H}}{2H}\Big)\,  \ol P(G^{H}_\alpha(x) \cap G^{H}_\alpha(y)) dx dy.
\ee
Note that $J_i(H,\overline{H},A)$ depend also on $\gamma$ and $\alpha$. We suppress this dependence in order to simplify the notation. 
 
From (\ref{IHL2Bound}) it follows that
\be \label{i-b}
 E\big[I^{H,\ol H}_{\alpha,\gamma}(A)^{2}\big] \leq K \sum_{i=1}^{4}J_i(H,\overline{H},A) .
\ee
Our next goal will be to bound $J_i$, $i=1,...,4$. 

Using \eqref{c-h} we notice that for any for arbitrarily small $\dl>0$ we can choose $\ol H$ small enough, such that for all $H\in (0,\ol H)$, 
\be \label{j-1-b} 
\begin{aligned} 
J_1(H,\overline{H},A)  &\leq \iint_{R_1} \exp\Big(C_{H,H} \gamma^{2}\frac{1-\norm{x-y}^{2H}}{2H}\Big)dx dy \\ 
&\leq \iint_{R_1} \exp\Big((1+\dl) \gamma^{2}\frac{1}{2H}\Big)dx dy \\ 
&\leq \exp\Big((1+\dl) \gamma^{2}\frac{1}{2H}\Big) |R_{1}| \\ 
&\leq C \exp\Big((1+\dl) \gamma^{2}\frac{1}{2H}-\frac{\kappa^{*}d}{H}\Big). 
\end{aligned} 
\ee
Since $\kappa^{*}\geq 1$ and $\dl$ is arbitrarily close to $1$, it follows that for $\ol H$ sufficiently small,  
\be \label{J1-b}
\sup_{H\leq \ol H/2}\sup_{A\in \mathcal A}J_1(H,\overline{H},A) <\infty , \quad \textrm{for all }  \gamma^{2} < 2d,  
\ee
as needed.

Since $A \subset D$ and $D$ is a bounded domain, the following bound on $J_4$ follows trivially, 
\be \label{J3-b} 
\sup_{0< H\leq \ol H/2}\sup_{A\in \mathcal A}J_4(H,\overline{H},A) <\infty. 
\ee
Next, we use the inequality 
$$
\frac{1-\norm{x-y}^{2H}}{2H} \leq - \log\norm{x-y}, \quad \textrm{for } \norm{x-y} < 1, \ H\in (0,1/2), 
$$
together with \eqref{region} and \eqref{j-i} we get, 
\begin{align*}
J_3(H, \overline{H},A) &\le \iint_{R_3} \exp\Big(\gamma^{2}\frac{1-\norm{x-y}^{2H}}{2H}\Big)  dx dy \\
&\le \iint_{ R_3} \exp\Big(\gamma^{2}(-\log\norm{x-y})\Big)  dx dy  \\
&\le \int_{D}\int_{D} e^{\gamma^{2} 2\overline{H}^{-1}}dxdy  \\
&\le e^{2\gamma^{2}\overline{H}^{-1}}|D|^{2}.
\end{align*}
It follows that 
\be \label{J2-b} 
\sup_{0< H\leq \ol H/2}\sup_{A\in \mathcal A} J_3(H,\overline{H},A) <\infty. 
\ee
The derivation of a uniform bound on $J_{2}(H, \overline{H},A)$ is long and involved. Therefore, we summarise the result in the following Proposition, which will be proved in Section \ref{sec-pr-prop35}. 
\begin{proposition}  \label{prop-j1}
There exists $\kappa^*>1$, $\gamma^{*}(d) >  \sqrt{\frac{7}{4}d}$ and $\ol H \in (0,H_0)$, such that for all $\gamma \leq \gamma^{*}(d)$ we have
$$
\sup_{0< H\leq \ol H/2}\sup_{A\in \mathcal A}J_{2}(H, \overline{H},a) <\infty. 
$$
\end{proposition} 

From (\ref{i-b}), (\ref{J1-b}) (\ref{J3-b}), (\ref{J2-b}) and Proposition \ref{prop-j1} we get the result of Proposition \ref{prop:I_L2_boundedness}. 
\end{proof}

The result of Proposition \ref{prop:I_L2_boundedness} is the main ingredient in the proof of uniform integrability of $\big \{M^{H}_\gamma(S)\big\}_{H\in(0,\ol H/2 )}$, as shown later in the proof of Proposition \ref{prop:uniform_integrability}.

Before we present the proof of Proposition \ref{prop:uniform_integrability}
we will state the following useful  corollary. 

For any $\alpha >\gamma$, $\ol H\in (0,H_0)$, and $H\in (0,\ol H/2)$ we define 
\be\label{eq:def_L}
L^{H,\ol H}_{\alpha,\gamma}(A)=  \int_A  e^{\gamma X^{H}(x) - \frac{\gamma^{2}}{2} E[X^{H}(x)^{2}] } \mathbf{1}_{\big(G^{H, \ol H}_\alpha(x)\big)^c} dx, \quad A \in \mathcal A. 
\ee
\begin{corollary} \label{corr-supp}
Let $A\in \mathcal A$ and $\alpha > \gamma$. Then for every $\eps>0$, there exists $\ol H>0$ sufficiently small such that 
\begin{align*} 
\sup_{0<H \leq \ol H/2}  \E \big[ L^{H, \ol H}_{\alpha,\gamma} (A)\big] &\leq \eps.
\end{align*} 
\end{corollary}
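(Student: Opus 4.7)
The plan is to reduce the claim directly to Lemmas \ref{ordinarilyGood} and \ref{le:GoodPointsEnough} via Fubini's theorem and a change of measure. First I would apply Fubini to interchange expectation and integration, giving
\begin{align*}
\E\big[L^{H,\ol H}_{\alpha,\gamma}(A)\big] = \int_A \E\Big[ e^{\gamma X^{H}(x) - \tfrac{\gamma^{2}}{2} E[X^{H}(x)^{2}]} \mathbf{1}_{(G^{H,\ol H}_\alpha(x))^c}\Big]\, dx.
\end{align*}
The integrand inside the expectation is precisely the Radon-Nikodym density $d\wt P/dP = e^{\gamma \overline{X^H}(x)}$ from \eqref{lr}, which has total $P$-mean equal to $1$. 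Hence the expectation equals $1 - \E\big[e^{\gamma \overline{X^H}(x)} \mathbf{1}_{G^{H,\ol H}_\alpha(x)}\big]$.

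Next, since $\alpha > \gamma$, I can choose $\eps' \in (0, \alpha/\gamma - 1)$ so that $\alpha - \gamma(1+\eps') > 0$. Applying Lemma \ref{le:GoodPointsEnough} with this choice gives, for all $\ol H$ sufficiently small and all $0 < H \le \ol H/2$,
\begin{align*}
\E\big[ e^{\gamma \overline{X^H}(x)} \mathbf{1}_{G^{H,\ol H}_\alpha(x)}\big] \;\ge\; 1 - p^{\ol H}_{\alpha - \gamma(1+\eps')}, \quad \text{for all } x\in D.
\end{align*}
Consequently
\begin{align*}
\E\big[L^{H,\ol H}_{\alpha,\gamma}(A)\big] \;\le\; \int_A p^{\ol H}_{\alpha - \gamma(1+\eps')}\, dx \;\le\; |D|\, p^{\ol H}_{\alpha - \gamma(1+\eps')}.
\end{align*}

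Finally, since $\alpha - \gamma(1+\eps') > 0$, Lemma \ref{ordinarilyGood} tells us that $p^{\ol H}_{\alpha - \gamma(1+\eps')} \to 0$ as $\ol H \to 0$. Hence, given any $\eps > 0$, we may shrink $\ol H$ so that $|D|\, p^{\ol H}_{\alpha - \gamma(1+\eps')} \le \eps$, uniformly in $H \in (0, \ol H/2)$ and $A \in \mathcal A$. There is no serious obstacle here; the bookkeeping step is simply verifying that $\alpha > \gamma$ leaves room to pick a valid $\eps'$ so that the index on $p^{\ol H}_{\cdot}$ remains positive, which is exactly the hypothesis of the corollary.
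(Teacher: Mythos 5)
Your proof is correct and follows essentially the same route as the paper: the paper's (very terse) argument likewise reduces to the identity $\E\big[e^{\gamma\overline{X^H}(x)}\mathbf{1}_{(G^{H,\ol H}_\alpha(x))^c}\big]=1-\E\big[e^{\gamma\overline{X^H}(x)}\mathbf{1}_{G^{H,\ol H}_\alpha(x)}\big]\le p^{\ol H}_{\alpha-\gamma(1+\bar\eps)}$ via Lemma \ref{le:GoodPointsEnough}, integrates over $A$, and lets $\ol H\to 0$ using Lemma \ref{ordinarilyGood}. You have merely made explicit the Tonelli/Fubini step and the unit mean of the exponential density, which the paper leaves implicit.
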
 
\begin{proof}
Fix $A\in \mathcal A$ and $\alpha >\gamma$. Let $\eps>0$. From Lemma \ref{le:GoodPointsEnough} we can choose $ \bar \eps \in (0,\alpha/\gamma-1)$ and $\ol H$ small enough such that 
\bn
\sup_{0<H \leq \ol H/2}  \E \big[ L^{\ol H}_{\alpha,\gamma} (A)\big] &\leq& |A|p^{\ol H}_{\alpha-\gamma(1+ \bar \eps)} \\
&\leq& \eps. 
\en
\end{proof}

\begin{remark} \label{rem-supp}  
Note that Corollary \ref{corr-supp} provides some valuable information on the support of the measure $M^H_\gamma(\cdot)$, which were defined in \eqref{approxMeasFBM}. Indeed we show that for $H$ small enough, $M^H_\gamma(\cdot)$ is arbitrarily small outside the set of good points $\cup_{x\in D}G^{H, \ol H}_\alpha(x)$, with probability close to $1$. 
\end{remark} 
Now we have all the ingredients for the proof of Proposition \ref{prop:uniform_integrability}. 

\begin{proof}[Proof of Proposition \ref{prop:uniform_integrability}]  

Recall that $M^{H}_\gamma$, $I^{H, \ol H}_{\alpha,\gamma}$ and $L^{H, \ol H}_{\alpha,\gamma}$ were defined in \eqref{approxMeasFBM}, (\ref{eq:def_I}) and \eqref{eq:def_L}, receptively. We therefore have,
\be \label{m-dec}
 M^{H}_\gamma(A)  =  I^{H, \ol H}_{\alpha,\gamma} (A) +  L^{H, \ol H}_{\alpha,\gamma}(A)  .
\ee
for any $\ol H \in (0,H_0)$. From Proposition \ref{prop:I_L2_boundedness}, we get that there exists $\alpha > \gamma$ sufficiently close to $\gamma$, such that for all $\ol H >0$ sufficiently small we have
\be\label{fd1} 
\sup_{0<H \leq \ol H/2}\sup_{A\in \mathcal A}E\big[I^{H,\ol H}_{\alpha,\gamma}(A)^{2} \big] < \infty.
\ee
It follows that $\{I^{H,\ol H}_{\alpha,\gamma}(A)\}_{0<H \leq \ol H/2}$ are uniformly integrable. 

Let $ \eps > 0$ be arbitrarily small and choose $B\in \mathcal F$ such that 
\be \label{unif-i}
\sup_{0<H \leq \ol H/2} E\big[I^{H,\ol H}_{\alpha,\gamma}(A) \mathbf{1}_{B}\big] < \frac{\eps}{2}.
\ee
From Corollary \ref{corr-supp} with $\alpha$ which was fixed in \eqref{fd1}, we have for $\ol H$ small enough 
\be \label{ggh1} 
\sup_{0<H \leq \ol H/2}  \E \big[ L^{H, \ol H}_{\alpha,\gamma} (A)\big] \leq \frac{ \eps}{2}.
\ee

From \eqref{m-dec}, \eqref{unif-i} and  \eqref{ggh1} it follows that there exists $\ol H$ small enough such that 
$$
\sup_{0<H<\ol H/2}E\big[M^{H}_\gamma(A) \mathbf{1}_{B}\big] \leq \eps,
$$
and the uniform integrability of $\big \{M^{H}_\gamma(A)\big\}_{H\in(0,\ol H)}$ follows.  

Next we will show that $\big \{M^{H}_\gamma(A)\big\}_{H\in [ \ol H,H_0)}$ is abounded in $L^{2}$, this will conclude the proof. Repeating the same steps as in \eqref{IHL2Bound} and then using \eqref{c-h} and \eqref{eq:gEstimate} we get that there exist constants  $C_{1}, C_{2}>0$ such that 
\begin{align*} 
E\left[ M_{\gamma}^{H}(A)^{2} \right] &= \int_A\int_A \exp\Big(C_{H,H}\gamma^{2}\frac{1-\norm{x-y}^{2H}}{2H} + g^{2H}(x,y)\Big) dx dy \\
&\leq  \int_A\int_A \exp\Big((1+C_{1})\gamma^{2}\frac{1-\norm{x-y}^{2H}}{2H} + C_{2}\Big) dx dy \\
&\leq  \int \int_{\|x-y\| <1} \exp\Big((1+C_{1})\gamma^{2}\frac{1-\norm{x-y}^{2\ol H}}{2\ol H} + C_{2}\Big) dx dy \\
&\quad +  \int \int_{\|x-y\| \geq 1} \exp\Big((1+C_{1})\gamma^{2}\frac{1-\norm{x-y}^{2H}}{2H} + C_{2}\Big) dx dy \\
&\leq  \wt C_{1}(\ol H)+ \wt C_{2}|A|^{2}, \quad \textrm{for all }  \ol H \leq H \leq H_0. 
 \end{align*} 
It follows that 
$$
\sup_{\ol H \leq H < H_0}E\left[ M_{\gamma}^{H}(A)^{2} \right] <\infty. 
$$

 \end{proof}

 \section{Proof of Proposition \ref{prop-j1}} \label{sec-pr-prop35} 
In order to get a uniform bound on $J_{2}$ in \eqref{j-i} we first need to bound $\ol P\big({G^{H}_\alpha(x)\cap G^{H}_\alpha(y)}\big)$. 

Using (\ref{eq:XHCovariance}) it follows that the Cameron-Martin shift due to the change of measure \eqref{ol-p} is given by  
\begin{align*}
&\gamma E\big[ X^{h}(x) \left(X^{H}(x) + X^{H}(y)\right) \big] \\
&=\gamma C_{h,H}\left( \frac{1}{H + h} +  \frac{1 - \norm{x-y}^{H+h}}{H+h} + g^{H,h}(x,x) +g^{H,h}(x,y) \right).
\end{align*}
Let $\dl>0$ be arbitrarily small. From (\ref{c-h}) we get that for all sufficiently small $\ol H$ we have 
\be\label{c-h-bnd} 
\sup_{0<h,H < \ol H} |C_{h,H}| \geq 1-\dl.
\ee 
From \eqref{eq:gEstimate} it follows that there exists a constant $C_{1}>0$ such that for all sufficiently small $\ol H$ we have 
\bd   
\sup_{0<h,H < \ol H}\sup_{x,y \in D}|g_{h,H}(x,y)| \leq C_{1}. 
\ed 
Together with \eqref{g-set} it follows that by choosing $\ol H$ small enough, for all $0<H < \ol H/2$, $h\in S_{H, \ol H} $ and $x,y \in D$ we have   
\be \label{drfit101}
\begin{aligned}
\gamma E\big[ X^{h}(x) \left(X^{H}(x) + X^{H}(y)\right) \big] 
&\geq \gamma (1-\dl) \left( \frac{1}{H + h} +  \frac{1 - \norm{x-y}^{H+h}}{H+h} -2C_{1} \right) \\
&\geq \gamma (1-2\dl) \left( \frac{1}{H + h} +  \frac{1 - \norm{x-y}^{H+h}}{H+h}\right). 
\end{aligned}
\ee
Recall that $G^{H}_\alpha(\cdot)$ was defined in \eqref{g-set}. Using \eqref{drfit101} we get for all $h\in S_{H,\ol H}$,   
\be \label{kl1} 
\begin{aligned} 
&\ol P\big({G^{H}_\alpha(x)\cap G^{H}_\alpha(y)}\big) \\
&\leq P\bigg(X^h(x) \leq \frac{\alpha}{h+H} -  (1-2\dl)\gamma\Big( \frac{1}{H + h} +  \frac{1 - \norm{x-y}^{H+h}}{H+h}  \Big), \\
&\quad  \quad \quad X^h(y) \leq \frac{\alpha}{h+H} -  (1-2\dl)\gamma\Big( \frac{1}{H + h} +  \frac{1 - \norm{x-y}^{H+h}}{H+h}  \Big)  \bigg) \\
&=P\bigg(X^h(x) \leq (\alpha-\gamma(1-2\dl))\frac{1}{h+H}  - (1-2\dl)\gamma\frac{1 - \norm{x-y}^{H+h}}{H+h} , \\
&\quad  \quad \quad X^h(y) \leq  (\alpha-\gamma(1-2\dl))\frac{1}{h+H} - (1-2\dl)\gamma  \frac{1 - \norm{x-y}^{H+h}}{H+h}   \bigg).
\end{aligned} 
\ee
In order to bound the right hand side of \eqref{kl1}, we need to choose a specific $h^*$ from $S_{H,\ol H}$. 
Define 
\be \label{h-opt} 
h^{*}:=\left[ -\frac{\kappa^{*}}{\log \|x-y\|}\right]_{S}, 
\ee
where the constant $\kappa^{*}\geq 1$ will be specified later, and for any $x>0$, $[x]_{S}$ is the largest object in $S_{H,\ol H}$ that is smaller than $x$. 
Using \eqref{s-set} one can observe that for any arbitrary small $\dl >0$ we have for all $\ol H$ small enough and $(x,y)\in R^{2}$ that 
\be \label{gh1}
 -\frac{\kappa^{*}}{ \log\|x-y\|} -C\left( \frac{\kappa^{*}}{ \log\|x-y\|}\right)^{2} \leq h^{*}\leq  -\frac{\kappa^{*}}{\log \|x-y\|}, 
\ee
where $C>0$ is a constant independent from $x,y \in R_2$. 

It follows that for all $\ol H$ small enough 
\be \label{kl2} 
e^{-\kappa^{*}} \leq \|x-y\|^{h^{*}} \leq e^{-\frac{1}{2}\kappa^{*}}, \quad \textrm{for all } x,y \in R_2. 
\ee
Let $\alpha >\gamma$ where $\gamma <\gamma^*(d)$. Define 
\be   \label{eps-def} 
\eps = \alpha -\gamma. 
\ee
Then we have
\be \label{kl3} 
(\alpha-\gamma(1-2\dl))= \eps + 2\gamma\dl.  
\ee

Using \eqref{kl2} and \eqref{kl3} we get that
\be  \label{kl4} 
\begin{aligned} 
 (\alpha-\gamma(1-2\dl))\frac{1}{h^{*}+H} &=   \frac{(\alpha-\gamma(1-2\dl))}{1-e^{-\kappa^{*}/2}}\frac{1-e^{-\kappa^{*}/2}}{h^{*}+H} \\
 &\leq  \frac{(\alpha-\gamma(1-2\dl))}{1-e^{-\kappa^{*}/2}}\frac{1-\|x-y\|^{h^{*}}}{h^{*}+H} \\
  &\leq  \frac{(\alpha-\gamma(1-2\dl))}{1-e^{-\kappa^{*}/2}}\frac{1-\|x-y\|^{h^{*}+H}}{h^{*}+H} \\
    &\leq  \frac{\eps + 2\gamma\dl}{1-e^{-\kappa^{*}/2}}\frac{1-\|x-y\|^{h^{*}+H}}{h^{*}+H}.
\end{aligned} 
\ee
From \eqref{kl1} and \eqref{kl4} we get 
\be \label{p-med} 
\begin{aligned} 
\ol P\big({G^{H}_\alpha(x)\cap G^{H}_\alpha(y)}\big) 
& \leq P\bigg(X^h(x) \leq    -\beta (\dl,\eps) \gamma\frac{1 - \norm{x-y}^{H+h^{*}}}{H+h^{*}}, \\
&\qquad \qquad \qquad X^h(y) \leq   -\beta(\dl,\eps)\gamma  \frac{1 - \norm{x-y}^{H+h^{*}}}{H+h^{*}}   \bigg) \\
&=:P^{*}(x,y;H), 
\end{aligned} 
\ee
where 
\be \label{beta} 
\beta(\dl,\eps) := 1-2\dl\left(1 + \frac{1}{1-e^{-\kappa^{*}/2}}\right)- \frac{\eps}{\gamma(1-e^{-\kappa^{*}/2})}.
\ee

We would like to derive an upper bound on $P^{*}$.
 \begin{lemma} \label{lem-p-star}
Let $\eps = \alpha - \gamma$. Then, for any $\dl>0$ arbitrarily small there exists $\ol H$ small enough such that  
\bd
 P^{*}(x,y,H) \leq \frac{C}{\gamma^{2}}  \exp\Big(-(1-\dl)\beta(\dl,\eps)^{2}\gamma^{2}h^{*} \Big(  \frac{1 - \norm{x-y}^{H+h^{*}}}{H+h^{*}}\Big)^{2} \frac{2}{2-\|x-y\|^{2h^{*}}}\Big). 
\ed
Here $C>0$ is a constant not depending on $(\gamma, \eps, H, h^{*},\ol H, \dl)$.  
\end{lemma}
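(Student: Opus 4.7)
The plan is to treat $(X^{h^*}(x), X^{h^*}(y))$ as a centred bivariate Gaussian vector and apply a two-dimensional Mills-ratio tail estimate, then check that the asymptotics match the claimed inequality after absorbing the $(1\pm O(\dl))$-errors from Assumption \ref{asmp} and the choice of $h^*$ in \eqref{h-opt}.

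First I would read off the covariance of $(X^{h^*}(x), X^{h^*}(y))$ from \eqref{eq:XHCovariance}. By \eqref{c-h} and \eqref{eq:gEstimate}, for any prescribed $\dl>0$ one can choose $\ol H$ small enough so that $|C_{h^*,h^*}-1|\le \dl$ and $g^{h^*,h^*}$ is uniformly bounded on $D\times D$. Since \eqref{kl2} forces $\|x-y\|^{2h^*}$ to lie in $[e^{-2\kappa^*},e^{-\kappa^*}]$ and $h^*\le\ol H$, the $1/h^*$ contributions dominate the bounded $g$-corrections, so with $\sigma^2$ denoting the common variance and $\sigma_V^2 := E[(X^{h^*}(x)+X^{h^*}(y))^2] = 2\sigma^2(1+\rho)$,
\begin{align*}
\sigma^2 \le \frac{1+\dl}{2h^*},\qquad \sigma_V^2 \le (1+\dl)\,\frac{2-\|x-y\|^{2h^*}}{h^*},
\end{align*}
while the correlation $\rho$ is trapped between constants in $(0,1)$ depending only on $\kappa^*$, making $(1+\rho)$ and $\sqrt{1-\rho^2}$ bounded away from $0$.

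The main tool is the following bivariate Mills-type estimate, which I would derive from scratch: for a centred Gaussian pair $(Z_1,Z_2)$ with variance $\sigma^2$, correlation $\rho\in(-1,1)$, and $T>0$,
\begin{align*}
P(Z_1\le -T,\;Z_2\le -T) \le \frac{\sigma^2(1+\rho)^2}{2\pi T^2\sqrt{1-\rho^2}}\exp\Bigl(-\frac{T^2}{\sigma^2(1+\rho)}\Bigr).
\end{align*}
The proof is by substituting $z_i = -T-u_i$ in the joint Gaussian density on $(-\infty,-T]^2$: the exponent splits as the constant $-T^2/(\sigma^2(1+\rho))$, a linear term in $u_1,u_2$, and a non-negative quadratic form; bounding the latter by $1$ leaves a separable linear exponential that integrates to $(\sigma^2(1+\rho)/T)^2$. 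Applying this with $T=\beta(\dl,\eps)\gamma(1-\|x-y\|^{H+h^*})/(H+h^*)$, the exponent equals $2T^2/\sigma_V^2$, which by the upper bound on $\sigma_V^2$ is at least $(1-\dl)\beta(\dl,\eps)^2\gamma^2 h^*\bigl(\frac{1-\|x-y\|^{H+h^*}}{H+h^*}\bigr)^2\frac{2}{2-\|x-y\|^{2h^*}}$, which matches the target exponent after a trivial relabelling of $\dl$.

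For the prefactor, \eqref{kl2} combined with $H+h^*\le 2h^*$ gives $(1-\|x-y\|^{H+h^*})/(H+h^*)\ge c/h^*$ for a constant $c$ depending only on $\kappa^*$, so $T^2\ge c^2\beta^2\gamma^2/h^{*2}$. Combined with $\sigma^2(1+\rho)^2/\sqrt{1-\rho^2}\le C/h^*$, the prefactor is bounded by $Ch^*/(\beta^2\gamma^2)\le C'/\gamma^2$, using $h^*\le\ol H\le 1$ and that $\beta(\dl,\eps)$ is uniformly bounded below in the relevant regime. The main delicate point is the use of the bivariate Mills inequality rather than a univariate Mills bound on the sum $X^{h^*}(x)+X^{h^*}(y)$: the latter would give the correct exponential decay but only a $1/(T\sqrt{h^*})$ prefactor, too weak to yield the crucial $C/\gamma^2$ factor that will be needed in the subsequent control of $J_2$ in Proposition \ref{prop-j1}.
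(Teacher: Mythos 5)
Your proposal is correct and follows essentially the same route as the paper: both bound $P^*$ by a two-dimensional Mills-ratio estimate for the centred Gaussian pair $(X^{h^*}(x),X^{h^*}(y))$, obtain the exponent $2T^2/\sigma_V^2$ (which matches the paper's lower bound on $\tfrac12 C^{T}MC$), and control the prefactor via \eqref{kl2} and $H\le h^*$ to get the $C/\gamma^{2}$ factor. The only difference is cosmetic: the paper invokes Savage's multivariate inequality (Theorem \ref{thm-sav}) and computes $\Sigma^{-1}$, $\det\Sigma$ and the $\Delta_i$ explicitly, whereas you derive the symmetric bivariate special case of that same inequality from scratch, which yields an identical bound.
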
 
The proof of Lemma \ref{lem-p-star} is postponed to Section \ref{sec-lem-p}. 

\begin{proof}[Proof of Proposition \ref{prop-j1}]  
Let $\dl >0$. From \eqref{j-i}, \eqref{p-med} and Lemma \eqref{lem-p-star} we get for $\ol H$ sufficiently small 
\be \label{rnd1}
\begin{aligned}
&J_{2}(H, \overline{H},S)\\
&\leq K \int\int_{R_{2}} \exp\Big(C_{H,H} \gamma^{2}\frac{1-\norm{x-y}^{2H}}{2H}\Big)P^{*}(x,y,H) dx dy \\
&\leq  \frac{C}{\gamma^{2}} \int \int_{R_{2}} \exp\Big((1+\dl) \gamma^{2}\frac{1-\norm{x-y}^{2H}}{2H}\Big) \\
& \qquad \quad  \times\exp\Big(-(1-\dl)\beta(\dl,\eps)^{2}\gamma^{2}h^{*} \Big(  \frac{1 - \norm{x-y}^{H+h^{*}}}{H+h^{*}}\Big)^{2} \frac{2}{2-\|x-y\|^{2h^{*}}}\Big) dx dy, 
\end{aligned}
\ee
where we have also used \eqref{c-h} in the last inequality. 

Assume now that for some $\ol \dl \in(0,1)$, $\xi \in (0,1/2)$ and $H_1 \in (0,H_0)$, we have for all $x,y \in R_2$ and $H \leq \ol H_1$
\be \label{must} 
 \gamma^{2}h^{*} \Big(  \frac{1 - \norm{x-y}^{H+h^{*}}}{H+h^{*}}\Big)^{2} \frac{2}{2-\|x-y\|^{2h^{*}}} 
> ( \xi +\ol \dl) \gamma^{2}\frac{1-\norm{x-y}^{2H}}{2H}. 
\ee
Then by choosing $\alpha$ close enough to $\gamma$, $\eps$ in \eqref{eps-def} now becomes arbitrarily small, and we get
$$
\frac{\eps}{\gamma(1-e^{-\kappa^{*}/2})} \leq \frac{\ol \dl}{16}. 
$$
By taking $\dl$ sufficiently small we have 
\be \label{eqr1}
2\dl\left(1 + \frac{1}{1-e^{-\kappa^{*}/2}}\right) \leq \frac{\ol \dl}{16}. 
\ee
It follows that $\beta(\dl,\eps)$ in \eqref{beta} is bounded from below by 
\be \label{rnd2}
1-\frac{\ol \dl}{8} \leq \beta(\dl,\eps). 
\ee
From \eqref{eqr1} we have $\dl \leq \ol \dl/64$. Together with \eqref{must} and \eqref{rnd2} we get 
\be \label{rnd3}
\begin{aligned}
& (1+\dl) \gamma^{2}\frac{1-\norm{x-y}^{2H}}{2H}  -(1-\dl)\beta(\dl,\eps)^{2}\gamma^{2}h^{*} \Big(  \frac{1 - \norm{x-y}^{H+h^{*}}}{H+h^{*}}\Big)^{2} \frac{2}{2-\|x-y\|^{2h^{*}}} \\
&\leq (1+\dl) \gamma^{2}\frac{1-\norm{x-y}^{2H}}{2H} - (1-\dl) \big(1-\frac{\ol \dl}{8} \big)^{2}( \xi +\ol \dl) \gamma^{2}\frac{1-\norm{x-y}^{2H}}{2H} \\
&\leq  \gamma^{2}\frac{1-\norm{x-y}^{2H}}{2H} \left(1+\dl-(1-\dl) \big(1-\frac{\ol \dl}{8} \big)^{2}( \xi +\ol \dl) \right) \\
&\leq  \gamma^{2}\frac{1-\norm{x-y}^{2H}}{2H}(1-\xi). 
\end{aligned}
\ee

Therefore from \eqref{rnd1} and \eqref{rnd3} for $\ol H$ small enough, which is depending on $\dl$ but not on $\eps$, we have
\be \label{j-2-b}
\begin{aligned}
\sup_{H\leq \ol H} J_{2}(H, \overline{H},S)& \leq C(\gamma) \int\int_{R_{2}} \exp\Big( (1-\xi) \gamma^{2}\frac{1-\norm{x-y}^{2H}}{2H}\Big) dx dy \\
&\leq C(\gamma)  \int\int_{ R_2} \exp\Big(-(1-\xi) \gamma^{2}\log\norm{x-y}\Big)  dx dy  \\
&\leq C(\gamma)  \int\int_{ \|x-y\| \leq 1} \norm{x-y}^{-(1-\xi) \gamma^{2}} dx dy \\
&<\infty, 
\end{aligned}
\ee
if $(1-\xi) \gamma^{2} <d$. Hence by assuming \eqref{must} we get
\be \label{hyop}
\sup_{H\leq \ol H} \sup_{S\in \mathcal S}J_{2}(H, \overline{H},S) <\infty, \quad \textrm{for all } \gamma^{2} < \frac{d}{1-\xi}, 
\ee
and the proof is complete. 

Therefore our goal is to show that \eqref{must} holds and to specify $\xi$. Define $\ol \xi =\xi+\ol \dl$. Since $\ol \dl \in(0,1)$ is arbitrarily small and $\xi \in (0,1/2)$,  \eqref{must} equivalent to  
\be \label{ineq1}
\frac{2\frac{h}{H}}{1-e^{2\log\norm{x-y}H}}\frac{1}{(1+\frac{h}{H})^{2}} (1 - e^{\log\norm{x-y}H(1+\frac{H}{h})})^{2} \frac{2}{2-e^{2H\log|x-y|\frac{h}{H}}} \geq \ol \xi, 
\ee
for some $\ol \xi \in (0,1/2)$. 

Substituting 
\be\label{change}
u = \frac{h}{H}, \qquad \lambda = -\log\norm{x-y}H, 
\ee
the left hand side of \eqref{ineq1} becomes
\be 
g(u,\lambda) := \frac{2u}{1-e^{-2\lambda}}\frac{1}{(1+u)^{2}}  \big(1 - e^{-\lambda(1+u)}  \big)^{2}\frac{2}{2-e^{-2\lam u}} . 
\ee
From the definition of $R_2$ in \eqref{region} and from  \eqref{gh1} it follows that for any arbitrarily small $\dl >0$ we have for all $\ol H$ small enough and $x,y\in R_{2}$ that
$$
 -\frac{\kappa^{*}}{H\log\|x-y\|} -\dl \leq \frac{h^{*}}{H}\leq  -\frac{\kappa^{*}}{H\log\|x-y\|} . 
$$ 
Together with \eqref{change} we get that 
\be \label{cont} 
 \frac{\kappa^{*}}{\lam}  -\dl \leq u \leq  \frac{\kappa^{*}}{\lam} . 
\ee
Now we fix $\kappa^{*}$. For any $\kappa>0$ let  
\begin{align*}
g( \frac{\kappa}{\lam}, \lambda) 
& = \frac{ 2\kappa}{\lam(1-e^{-2\lambda})}\frac{\lam^{2}}{(\lam +\kappa)^{2}}  \big(1 - e^{-(\kappa + \lam)} \big)^{2} \frac{2}{2-e^{-2\kappa}} \\
&= \frac{ 2\kappa \lam}{(1-e^{-2\lambda})}\frac{1}{(\lam +\kappa)^{2}}  \big(1 - e^{-(\kappa + \lam)} \big)^{2} \frac{2}{2-e^{-2\kappa}}.
\end{align*}
Define 
$$
f(\kappa):=\lim_{\lambda \to 0}g( \frac{\kappa}{\lam}, \lambda)  = \frac{2}{\kappa(2-e^{-2\kappa})} (1-e^{-\kappa})^{2}.
 $$ 
We further define 
$$
\kappa^{*}= \textrm{arg} \max_{\kappa} f(\kappa) \approx 1.0370, 
$$ 
and 
\be \label{xi} 
\bar \xi =f(\kappa^{*}) \approx 0.42872.
\ee

Note that 
$$
g( \frac{\kappa^*}{\lam}, \lambda)  \geq f(\kappa^*), \quad \textrm{ for all } 0<\lam \leq \kappa^{*}.
$$
So from \eqref{cont} and the continuity of $g$ we get that \eqref{ineq1} holds for $\ol \xi$ in \eqref{xi}. Note that \eqref{cont}  with $\ol \xi$ in \eqref{xi} holds any $x,y \in R_{2}$, since for such $x,y$ we have $\lam = -H\log\|x-y\|  \leq \kappa^{*}$ (see \eqref{region}). Therefore, for all $\ol H$ sufficiently small we get \eqref{hyop} where 
\be 
\gamma^*(d) =\sqrt{ \frac{d}{1-\ol \xi}} >\sqrt{1.75d}. 
\ee
\end{proof}

\section{Proof of Lemma \ref{lem-p-star}}  \label{sec-lem-p}

\begin{proof}[Proof of Lemma \ref{lem-p-star}]
Recall that $h^*$ was defined in \eqref{h-opt}. From \eqref{eq:XHCovariance} it follows that
$$
\big(X^{h^*}(x), X^{h^*}(y)\big) \sim N(0, \Sigma ),
$$
where
\be \label{sig-mat}
\Sigma  =\begin{bmatrix}
    C_{h^*,h^*}\big( \frac{1}{2h^{*}} +g^{h^{*},h^{*}}(x,x)   \big)  &  C_{h^{*},h^{*}}\big(\frac{1 - \norm{x-y}^{2h^{*}}}{2h^{*}}  +g^{h^{*},h^{*}}(x,y)\big)\\
   C_{h^{*},h^{*}}  \big( \frac{1 - \norm{x-y}^{2h^{*}}}{2h^{*}} +g^{h^{*},h^{*}}(y,x)\big)      &     C_{h^{*},h^{*}}\big( \frac{1}{2h^{*}}+g^{h^{*},h^{*}}(y,y) \big) \\
\end{bmatrix}.
\ee

By inverting $\Sigma$ we get 
\be\label{sig-inv}
\begin{aligned} 
&(\Sigma )^{-1} \\
&= \frac{1}{\det \Sigma} \begin{bmatrix}
     C_{h^{*},h^{*}}\big( \frac{1}{2h^{*}}+g^{h^{*},h^{*}}(y,y) \big)      & -C_{h^{*},h^{*}}\big(\frac{1 - \norm{x-y}^{2h^{*}}}{2h^{*}}  +g^{h^{*},h^{*}}(x,y)\big)  \\
   - C_{h^{*},h^{*}}  \big( \frac{1 - \norm{x-y}^{2h^{*}}}{2h^{*}} +g^{h^{*},h^{*}}(y,x)\big)     &    C_{h^{*},h^{*}}\big( \frac{1}{2h^{*}} +g^{h^{*},h^{*}}(x,x)   \big)  \\
\end{bmatrix}.
\end{aligned} 
\ee
From \eqref{c-h}, \eqref{eq:gEstimate} and \eqref{kl2} we get that for any arbitrarily small $\dl_{1}>0$ there exists $\ol H$ small enough (and hence $h^* \leq \ol H$ small) such that  
\be \label{det} 
\begin{aligned} 
\det \Sigma &\leq C^2_{h^{*},h^{*}}\frac{1}{4(h^{*})^2} \big(1-(1-\|x-y\|^{2h^{*}})^2\big)+ \frac{C}{h^{*}} \\ 
&\leq (1+\dl_{1})\frac{1}{4(h^{*})^2} \big(1-(1-\|x-y\|^{2h^{*}})^2\big),
\end{aligned} 
\ee
and similarly 
\be \label{det2} 
\begin{aligned} 
 \det \Sigma \geq (1-\dl_{1})\frac{1}{4(h^{*})^2} \big(1-(1-\|x-y\|^{2h^{*}})^2\big) .
\end{aligned} 
\ee

We will use the following bound which was derived by Savage in \cite{Savage}.
\begin{theorem}\label{thm-sav} 
Let $M= \Sigma^{-1} $ with $M=(m_{ij})_{d\times d}$ and $C=(c_1,...,c_d) \in \mathbb{R}^d$. If for all $1\leq i\leq d$, we have $\Delta_i := \sum_{j=1}^{d} C_j m_{ij}>0$ then 
$$
P(X_1 \geq c_1, ...., X_n \geq c_n) \leq \Big(\prod_{i=1}^d \Delta_i \Big)^{-1} \frac{\sqrt{\det{M}}}{(2\pi)^{d/2}} e^{-\frac{1}{2}C^TMC}.
$$ 
\end{theorem}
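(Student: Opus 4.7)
The plan is to prove this Gaussian tail estimate directly from the explicit form of the Gaussian density by performing an affine shift of the integration variables and then trivially bounding the remaining quadratic factor by one. Since $\Sigma$ is positive definite (otherwise $\Sigma^{-1}$ would not exist), the vector $X=(X_1,\ldots,X_d)$ has a density proportional to $\sqrt{\det M}\,e^{-\frac{1}{2}x^\top Mx}$, so the probability in question can be written as an explicit multiple integral over the orthant $\prod_i [c_i,\infty)$.

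First I would substitute $y=x-C$, turning the domain of integration into the positive orthant $[0,\infty)^d$. The algebraic step is the completion-of-square identity
\begin{equation*}
x^\top M x \;=\; y^\top M y \;+\; 2\,C^\top M y \;+\; C^\top M C,
\end{equation*}
which follows from the symmetry of $M$ and holds entry-wise. Since, again by symmetry of $M$, we have $(C^\top M)_i=\sum_{j}C_j m_{ij}=\Delta_i$, this gives
\begin{equation*}
P(X_1\ge c_1,\ldots,X_d\ge c_d) \;=\; \frac{\sqrt{\det M}}{(2\pi)^{d/2}}\, e^{-\frac{1}{2}C^\top MC}\!\!\int_{[0,\infty)^d}\!\! e^{-\frac{1}{2}y^\top My}\,e^{-\sum_{i=1}^d \Delta_i y_i}\,dy.
\end{equation*}

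Next I would use the fact that $M$ is positive semi-definite, so $y^\top M y\ge 0$ for every $y\in\R^d$, whence $e^{-\frac{1}{2}y^\top My}\le 1$ on the domain. This bounds the multiple integral by the product of one-dimensional exponential integrals:
\begin{equation*}
\int_{[0,\infty)^d} e^{-\sum_{i=1}^d \Delta_i y_i}\,dy \;=\; \prod_{i=1}^{d}\int_0^\infty e^{-\Delta_i y_i}\,dy_i \;=\; \prod_{i=1}^d \Delta_i^{-1},
\end{equation*}
where the hypothesis $\Delta_i>0$ is exactly what is needed to ensure convergence of each factor. Substituting this bound into the previous display yields the stated inequality.

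There is no real obstacle here: the only point to check carefully is the algebraic expansion of $x^\top M x$ around $C$ and the identification of the linear coefficient $C^\top M$ with the vector of $\Delta_i$'s via symmetry of $M$. Once that is in place, the estimate $e^{-y^\top My/2}\le 1$ on the positive orthant and a product-of-exponentials integration close the argument in one line. I would include a brief remark that symmetry of $M$ (inherited from $\Sigma$) is essential so that the same quantity $\Delta_i$ appears both as the assumed positivity condition and as the coefficient governing the exponential decay in $y_i$.
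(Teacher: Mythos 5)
Your proposal is correct, and every step checks out: the normalization $\sqrt{\det M}=(\det\Sigma)^{-1/2}$ of the centred Gaussian density, the completion of the square $x^{\top}Mx=y^{\top}My+2C^{\top}My+C^{\top}MC$ under $y=x-C$, the identification $(C^{\top}M)_i=\Delta_i$ via symmetry of $M$, the bound $e^{-\frac{1}{2}y^{\top}My}\le 1$ (valid since $M=\Sigma^{-1}$ is positive definite), and the factorization of the remaining integral over $[0,\infty)^d$ into $\prod_i\Delta_i^{-1}$, which is exactly where $\Delta_i>0$ is used. Note that the paper does not prove this theorem at all --- it is quoted from Savage \cite{Savage} --- and your argument is in fact Savage's original one: shift to the corner point $C$, discard the quadratic term, and integrate the product of exponentials. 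So your write-up supplies a complete, self-contained proof of the cited result; the only implicit hypothesis worth stating explicitly (as you did) is that $(X_1,\dots,X_d)$ is centred Gaussian with nonsingular covariance $\Sigma$, which is how the theorem is applied in the paper (the $n$ versus $d$ in the paper's statement is a typo).
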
 
Since $\{X^h(x)\}_{x\in \mathbb{R}^d}$ is a centred Gaussian field we get from \eqref{p-med} that,  
\bn
P^*:= P\Big( X^{h^{*}}(x) > \gamma \beta(\dl,\eps) \frac{1 - \norm{x-y}^{H+h^{*}}}{H+h^{*}} , \, X^{h^{*}}(y) > \gamma  \beta(\dl,\eps) \frac{1 - \norm{x-y}^{H+h^{*}}}{H+h^{*}} \Big).
\en
Using the notation of Theorem \ref{thm-sav} we have 
\be \label{c-def} 
C= (c,c) := \Big(\beta(\dl,\eps) \gamma  \frac{1 - \norm{x-y}^{H+h^{*}}}{H+h^{*}},\beta(\dl,\eps) \gamma   \frac{1 - \norm{x-y}^{H+h^{*}}}{H+h^{*}}\Big). 
\ee
Next we derive a lower bound to $\frac{1}{2}C^TMC$.  Using \eqref{eq:gEstimate}, \eqref{c-h-bnd}, \eqref{kl2} and \eqref{sig-inv}, we get for any $\dl >0$ arbitrarily small we have for all $\ol H$ small enough, 
\bn
\frac{1}{2}C^TMC&=&\frac{1}{2}c^{2}\sum_{i,j=1,2} M_{ij}  \\
&\geq &c^{2}(1-\dl)\frac{1}{\det \Sigma} \Big(\frac{1}{2h^{*}}-\frac{1 - \norm{x-y}^{2h^{*}}}{2h^{*}} -C_{1}\Big) \\
&\geq &c^{2}(1-2\dl)\frac{1}{\det \Sigma} \Big(\frac{1}{2h^{*}}-\frac{1 - \norm{x-y}^{2h^{*}}}{2h^{*}}  \Big) \\
&\geq &c^{2}\frac{(1-2\dl)}{(1+\dl_{1})}\frac{4(h^{*})^2}{\big(1-(1-\|x-y\|^{2h^{*}})^2\big)} \Big(\frac{1}{2h^{*}}-\frac{1 - \norm{x-y}^{2h^{*}}}{2h^{*}} \Big),
\en
where we have used \eqref{det} in the last inequity. 

Together with \eqref{c-def} we have 
\be \label{bf0} 
\begin{aligned} 
\frac{1}{2}C^TMC
&\geq \gamma^{2}\frac{\beta(\dl,\eps)^{2} (1-2\dl)}{(1+\dl_{1})} \frac{(1 - \norm{x-y}^{H+h^{*}})^{2}}{(H+h^{*})^{2}}\\ 
&\quad \times \frac{4(h^{*})^2}{\big(1-(1-\|x-y\|^{2h^{*}})^2\big)} \Big(\frac{1}{2h^{*}}-\frac{1 - \norm{x-y}^{2h^{*}}}{2h^{*}} \Big) \\
&\geq \gamma^{2} \beta(\dl,\eps)^{2}(1-\tilde \dl) \frac{2h^{*}}{2+\|x-y\|^{2h^{*}}}  \left( \frac{1 - \norm{x-y}^{H+h^{*}}}{H+h^{*}} \right)^{2}, 
\end{aligned} 
\ee
where $\tilde \dl > 0$ is arbitrarily small and depends on the choice of $\ol H$. 

Let $\dl_{2} \in (0,1)$. Recall that $M=\Sigma ^{-1}$. Using \eqref{sig-inv} and \eqref{c-def} and repeating similar steps as in the derivation of \eqref{bf0}, have for all $\ol H$ small enough 
$$
\Delta_1 =c(m_{11}+m_{12}) \geq (1- \dl_{2}) \beta(\dl,\eps)\gamma  \frac{1 - \norm{x-y}^{H+h^{*}}}{H+h^{*}}\frac{1}{\det \Sigma}\Big( \frac{1}{2h^{*}}  -\frac{1 - \norm{x-y}^{2h^{*}}}{2h^{*}} \Big),
$$
\be \label{ldel}
\Delta_2=c(m_{21}+m_{22}) \geq (1- \dl_{2})  \beta(\dl,\eps)\gamma  \frac{1 - \norm{x-y}^{H+h^{*}}}{H+h^{*}}\frac{1}{\det \Sigma}\Big( \frac{1}{2h^{*}}  -\frac{1 - \norm{x-y}^{2h^{*}}}{2h^{*}} \Big).
\ee
Note that by \eqref{kl2}, \eqref{det2} and \eqref{ldel}, $\Delta_1, \Delta_2 >0$ for $x\not =y$. 

Using \eqref{det} and (\ref{ldel}) and repeating the same lines as before, we get for arbitrarily small $\ol \dl>0$, 
\bn
\Delta_i &\geq &  2\gamma(1-\bar \dl)  \beta(\dl,\eps)h^{*}\|x-y\|^{2h^{*}}\frac{1 - \norm{x-y}^{H+h^{*}}}{H+h^{*}} \frac{1}{1-(1-\|x-y\|^{2h^{*}})^2 } \\
& \geq & 2\gamma(1-\bar \dl)  \beta(\dl,\eps)h^{*} \frac{1 - \norm{x-y}^{H+h^{*}}}{H+h^{*}} \frac{1}{2-\|x-y\|^{2h^{*}} },   \quad i=1,2. 
\en 
We therefore have 
\bn
(\Delta_1\Delta_2)^{-1} &\leq &   \Big(4(1-\bar \dl)^{2}\beta(\dl,\eps)^{2}\gamma^2  (h^{*})^2 \frac{(1 - \norm{x-y}^{H+h^{*}})^2}{(H+h^{*})^2} \frac{1}{(2-\|x-y\|^{2h^{*}})^2 }\Big)^{-1} \\ 
&\leq&   \frac{1}{4(1-\bar \dl)^{2}\beta(\dl,\eps)^{2}\gamma^2 (h^{*})^2}\frac{(H+h^{*})^2}{ (1 - \norm{x-y}^{H+h^{*}})^2} (2-\|x-y\|^{2h^{*}})^2. 
\en
Note that from \eqref{det2} we get 
\bn
\sqrt{\det M} = \frac{1}{\sqrt{\det \Sigma}} \leq C  \frac{2h^{*}}{\sqrt{1-(1-\|x-y\|^{2h^{*}})^2}}.
\en
We then have 
\be \label{bf1}
\Big(\prod_{i=1}^2 \Delta_i \Big)^{-1} \frac{\sqrt{\det{M}}}{2\pi}  \leq C \frac{1}{h^{*}\gamma^2}\frac{(H+h^{*})^2}{ (1 - \norm{x-y}^{H+h^{*}})^2} \frac{(2-\|x-y\|^{2h^{*}})^2}{\sqrt{1-(1-\|x-y\|^{2h^{*}})^2}}.
\ee

From \eqref{bf0} and \eqref{bf1} it follows that for $\ol H$ small enough we have 
\bn
P^{*} &\leq& C \frac{1}{h^{*}\gamma^2}\frac{(H+h^{*})^2}{ (1 - \norm{x-y}^{H+h^{*}})^2} \frac{(2-\|x-y\|^{2h^{*}})^2}{\sqrt{1-(1-\|x-y\|^{2h^{*}})^2}} \\
&&\times \exp\Big(-\beta(\dl,\eps)^{2}(1-\tilde \dl)h^{*} \gamma^{2}\Big(  \frac{1 - \norm{x-y}^{H+h^{*}}}{H+h^{*}}\Big)^{2} \frac{2}{2-\|x-y\|^{2h^{*}}}\Big).
\en
By using the following lemma we get the desired bound on $P^{*}$. 
\begin{lemma} \label{lem-bound} There exists $C>0$ such that 
$$
\sup_{H\leq h^*}\sup_{(x,y) \in R_{2}}  \frac{1}{h^{*}}\frac{(H+h^{*})^2}{ (1 - \norm{x-y}^{H+h^{*}})^2} \frac{(2-\|x-y\|^{2h^{*}})^2}{\sqrt{1-(1-\|x-y\|^{2h^{*}})^2}} <Ch^*.
$$
\end{lemma}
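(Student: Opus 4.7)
The plan is to observe that on the region $R_2$ with our choice of $h^*$, the quantity $\|x-y\|^{h^*}$ is pinned inside a compact sub-interval of $(0,1)$, which trivializes most of the factors in the expression; the only non-trivial point is controlling $(H+h^*)^2/(1-\|x-y\|^{H+h^*})^2$, which follows by bounding $\|x-y\|^{H+h^*}$ away from $1$ and using $H\le h^*$.

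First I would recall the two-sided estimate \eqref{kl2}, namely $e^{-\kappa^*}\le \|x-y\|^{h^*}\le e^{-\kappa^*/2}$ for all $(x,y)\in R_2$ and all $\bar H$ sufficiently small. Squaring this gives $\|x-y\|^{2h^*}\in [e^{-2\kappa^*},e^{-\kappa^*}]\subset (0,1)$. Consequently
\[
(2-\|x-y\|^{2h^*})^2 \le (2-e^{-2\kappa^*})^2,
\]
and, since $1-(1-t)^2 = t(2-t)$,
\[
1-\bigl(1-\|x-y\|^{2h^*}\bigr)^2 = \|x-y\|^{2h^*}\bigl(2-\|x-y\|^{2h^*}\bigr) \ge e^{-2\kappa^*}(2-e^{-\kappa^*}).
\]
So the last two factors in the expression contribute only a multiplicative constant depending on $\kappa^*$.

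Next I would control the middle factor. Since $\|x-y\|<1$ and $H\ge 0$, we have $\|x-y\|^{H+h^*}\le \|x-y\|^{h^*}\le e^{-\kappa^*/2}$, and therefore
\[
1-\|x-y\|^{H+h^*} \ge 1-e^{-\kappa^*/2} > 0,
\]
which is a strictly positive constant (recall $\kappa^*\ge 1$). Hence
\[
\frac{(H+h^*)^2}{\bigl(1-\|x-y\|^{H+h^*}\bigr)^2} \le \frac{(H+h^*)^2}{(1-e^{-\kappa^*/2})^2}.
\]
Now, recalling that $h^*\in S_{H,\bar H}$ forces $h^*>2H$ and in particular $H\le h^*$, we get $(H+h^*)^2\le 4(h^*)^2$.

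Combining everything, with a constant $C=C(\kappa^*)$ independent of $H$, $h^*$ and $(x,y)\in R_2$,
\[
\frac{1}{h^*}\,\frac{(H+h^*)^2}{\bigl(1-\|x-y\|^{H+h^*}\bigr)^2}\,\frac{(2-\|x-y\|^{2h^*})^2}{\sqrt{1-(1-\|x-y\|^{2h^*})^2}} \le \frac{1}{h^*}\cdot C\cdot (h^*)^2 = C\,h^*,
\]
which is the claimed bound. There is no genuine obstacle here: the estimate \eqref{kl2}, which was the real content of the choice of $h^*$ in \eqref{h-opt}, already forces $\|x-y\|^{h^*}$ into a compact sub-interval of $(0,1)$, and this makes every potentially singular factor harmless, leaving only the elementary bound $(H+h^*)^2\le 4(h^*)^2$ to produce the final $h^*$ on the right-hand side.
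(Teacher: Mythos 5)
Your proof is correct and follows essentially the same route as the paper: use the two-sided bound \eqref{kl2} to pin $\norm{x-y}^{h^*}$ inside a compact subinterval of $(0,1)$, so that every potentially singular factor is controlled by a constant depending only on $\kappa^*$, and then conclude with $(H+h^*)^2/h^*\le 4h^*$ from $H\le h^*$. If anything, your version is slightly more explicit than the paper's in identifying that the middle factor requires the upper bound $\norm{x-y}^{H+h^*}\le e^{-\kappa^*/2}$ while the square-root factor requires the lower bound $\norm{x-y}^{2h^*}\ge e^{-2\kappa^*}$.
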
 
\end{proof}

\begin{proof}[Proof of Lemma \ref{lem-bound}]
From \eqref{kl2} and since $H \leq h^{*}$ we get  
$$ 
 \norm{x-y}^{H+h^{*}}  \geq e^{-\kappa^{*}(H+h^{*})/h^{*}} \geq e^{-2\kappa^{*}}, 
$$
and similarly 
$$
\norm{x-y}^{2h^{*}} \geq  e^{-2\kappa^{*}}. 
$$
Therefore there exists $C>0$ independent from $h,H$ and $x,y$ such that 
\bn
\frac{1}{h^{*}}\frac{(H+h^{*})^2}{ (1 - \norm{x-y}^{H+h^{*}})^2} \frac{(2-\|x-y\|^{2h^{*}})^2}{\sqrt{1-(1-\|x-y\|^{2h^{*}})^2}} &\leq &C\frac{1}{h^{*}} (H+h^{*})^2 \\
&=& 4Ch^{*}.
\en
\end{proof}

\section{Convergence} \label{section-conv} 

In this section we prove the convergence of $\{M^{H}_\gamma\}_{H\in (0, H_0)}$ as $H \downarrow 0$. In order to do so, we will first show that for any $A\in \mathcal A $, $\{M^{H}_\gamma(A)\}_{H\in (0, H_0)}$ converges in $L^1$. We first describe our method of proof which uses ideas from \cite{berestycki2015elementary}.   

Recall that $M^{H}_\gamma$, $I^{H, \ol H}_{\alpha,\gamma}$ and $L^{H, \ol H}_{\alpha,\gamma}$ were defined in \eqref{approxMeasFBM}, (\ref{eq:def_I}) and \eqref{eq:def_L}, receptively.
Recall that by \eqref{m-dec} for any $\ol H \in (0,H_0)$ we have  
\be \label{m-1}
 M^{H}_\gamma(A)  =  I^{H, \ol H}_{\alpha,\gamma} (A) +  L^{H, \ol H}_{\alpha,\gamma}(A).
 \ee
Let $\eps>0$ be arbitrarily small, then by Corollary \ref{corr-supp} we can choose $\ol H$ small enough such that 
\be  \label{m-2}
 \sup_{0<H \leq \ol H/2}  \E \big[ L^{H, \ol H}_{\alpha,\gamma} (A)\big]  \leq  \frac{\eps}{2}.
 \ee
We will show that $\{I^{H, \ol H}_{\alpha,\gamma} (A)\}_{H \in (0,H_0)}$ is a Cauchy sequence in $L^2$, so we can choose $H_1 \in (0,\ol H)$ such that 
\be \label{m-3}
E\left[ \left(I^{H, \ol H}_{\alpha,\gamma} (A) - I^{H', \ol H}_{\alpha,\gamma} (A)\right)^2 \right] <\frac{\eps}{2}, \quad \textrm{for all } 0\leq H,H' \leq H_1.  
\ee
From \eqref{m-1}--\eqref{m-3} we get 
\be \label{m-4}
\begin{aligned} 
& E\left[ \left|M^{H, \ol H}_{\alpha,\gamma} (A) - M^{H', \ol H}_{\alpha,\gamma} (A)\right|\right] \\
&\leq E\left[ \left|I^{H, \ol H}_{\alpha,\gamma} (A) - I^{H', \ol H}_{\alpha,\gamma} (A)\right|\right] +E\left[ \left|L^{H, \ol H}_{\alpha,\gamma} (A) - L^{H', \ol H}_{\alpha,\gamma} (A)\right|\right]  \\  
&\leq \eps,  
\end{aligned} 
\ee
for all $0\leq H,H' \leq H_1$. Hence $\{M^{H}_\gamma(A)\}_{H\in (0, H_0)}$ is a Cauchy sequence in $L^1$, and this gives the convergence result. 
 
The remainder of this section is dedicated to showing that   $\{I^{H,\ol H}_{\alpha,\gamma}(A)\}_{H\in (0,\ol H)}$ converges in $L^{2}$ as $H\downarrow 0$. We summarise this result in the following proposition. 
\begin{proposition} \label{prop-l2} 
For any $\gamma <\gamma^*(d)$ and $\alpha >\gamma$ sufficiently close to $\gamma$, the set $\{I^{H, H_0}_{\alpha,\gamma}(A)\}_{H\in (0,\ol H)}$ converges in $L^{2}$ as $H \to 0$, for all $A\in \mathcal A$. 
\end{proposition}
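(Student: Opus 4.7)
The strategy is to show that $\{I^{H,\ol H}_{\alpha,\gamma}(A)\}_{H\in(0,\ol H/2)}$ is a Cauchy sequence in $L^2(\Omega, \F, P)$; completeness of $L^2$ then gives the conclusion. Expanding
\begin{align*}
E\!\left[(I^{H,\ol H}_{\alpha,\gamma}(A) - I^{H',\ol H}_{\alpha,\gamma}(A))^2\right] &= E\big[(I^{H,\ol H}_{\alpha,\gamma}(A))^2\big] - 2\, E\big[I^{H,\ol H}_{\alpha,\gamma}(A)\, I^{H',\ol H}_{\alpha,\gamma}(A)\big] \\
&\quad + E\big[(I^{H',\ol H}_{\alpha,\gamma}(A))^2\big],
\end{align*}
it suffices to prove that the cross moment $E[I^{H,\ol H}_{\alpha,\gamma}(A)\, I^{H',\ol H}_{\alpha,\gamma}(A)]$ converges to a common limit $\ell$ as $(H,H')\to(0,0)$ in $\R^2$, since the diagonal case $H=H'$ forces $E[(I^{H,\ol H}_{\alpha,\gamma}(A))^2]\to\ell$ as well, and the three terms then combine to tend to zero.

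To compute the cross moment I would mirror the Cameron--Martin argument in the proof of Proposition \ref{prop:I_L2_boundedness}. Define the tilted measure $\ol P^{H,H'}$ by
\begin{align*}
\frac{d\ol{P}^{H,H'}}{dP} = \exp\!\Big(\gamma X^H(x) + \gamma X^{H'}(y) - \tfrac{\gamma^2}{2}\, E\big[(X^H(x)+X^{H'}(y))^2\big]\Big).
\end{align*}
Then Fubini and a direct Gaussian computation yield
\begin{align*}
E\big[I^{H,\ol H}_{\alpha,\gamma}(A)\,I^{H',\ol H}_{\alpha,\gamma}(A)\big] = \int_A\!\!\int_A e^{\gamma^2 E[X^H(x)X^{H'}(y)]}\, \ol{P}^{H,H'}\!\big(G^{H,\ol H}_\alpha(x)\cap G^{H',\ol H}_\alpha(y)\big)\,dx\,dy.
\end{align*}
By Assumption \ref{asmp}, for each $x\neq y$ in $D$ the exponential factor converges pointwise to $\|x-y\|^{-\gamma^2}\, e^{\gamma^2 g(x,y)}$ as $(H,H')\to(0,0)$.

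The remaining work is a dominated convergence argument on $A\times A$. Adapting the region decomposition $R_1,R_2,R_3,R_4$ from \eqref{region} and the estimates leading to Proposition \ref{prop-j1}, with $\kappa^*$, $\ol H$, and $\alpha$ chosen as in Section \ref{sec-pr-prop35}, one obtains an integrable majorant of the form $C\|x-y\|^{-(1-\ol\xi)\gamma^2}$ with $(1-\ol\xi)\gamma^2<d$, uniformly in $(H,H')$ with $0<H,H'\le\ol H/2$. Pointwise convergence of $\ol P^{H,H'}(G^{H,\ol H}_\alpha(x)\cap G^{H',\ol H}_\alpha(y))$ then follows from the continuity of the finite-dimensional Gaussian laws appearing in \eqref{g-set} in their covariance parameters, which in turn depend continuously on $(H,H')$ through \eqref{eq:XHCovariance}--\eqref{eq:gEstimate}.

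The principal obstacle will be the two-scale adaptation of Lemma \ref{lem-p-star}. Under $\ol P^{H,H'}$ the Cameron--Martin shift in $X^h(x)$ is $\gamma E[X^h(x)X^H(x)] + \gamma E[X^h(x)X^{H'}(y)]$, mixing the Hurst scales $H$ and $H'$, so Savage's inequality must be applied to a Gaussian vector $(X^{h^*}(x),X^{h^*}(y))$ whose thresholds involve both $H$ and $H'$. The natural remedy is to choose an auxiliary scale $h^*=h^*(x,y)$ tied to $\min(H,H')$ in the spirit of \eqref{h-opt}, lying in both grids $S_{H,\ol H}$ and $S_{H',\ol H}$ after a mild modification of \eqref{s-set}, and to repeat the determinant computations \eqref{det}--\eqref{det2} with the modified thresholds. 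The analysis is structurally identical to Section \ref{sec-lem-p} and will produce a two-point probability bound analogous to Lemma \ref{lem-p-star}, yielding the integrable dominant that closes the argument.
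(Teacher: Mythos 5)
Your overall skeleton --- Cauchy in $L^{2}$ via the three-term expansion of $E[(I^{H,\ol H}_{\alpha,\gamma}(A)-I^{H',\ol H}_{\alpha,\gamma}(A))^{2}]$ and a Girsanov computation of the mixed moment --- is the same as the paper's. But you route all of the difficulty into a single joint limit $E[I^{H}I^{H'}]\to\ell$ as $(H,H')\to(0,0)$ with an integrable majorant valid up to the diagonal, and that forces the two-scale version of Lemma \ref{lem-p-star} which you correctly flag as the principal obstacle and then leave unproved. The paper avoids that lemma entirely by exploiting an asymmetry: it proves only $\limsup_{H}E[(I^{H})^{2}]\le\ell$ (Lemma \ref{lemma-con1}) and $\liminf_{H,\hat H}E[I^{H}I^{\hat H}]\ge\ell$ (Lemma \ref{lem-lb}), which already yields $\limsup E[(I^{H}-I^{\hat H})^{2}]\le\ell+\ell-2\ell=0$. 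The point is that the delicate near-diagonal control (Lemma \ref{lem-p-star} together with \eqref{must}) is needed only for an \emph{upper} bound, and an upper bound is needed only for the pure second moments, where $H=\hat H$ and the single-scale Lemma \ref{lem-p-star} applies verbatim. For the mixed term only a lower bound is required, so one simply discards the region $\|x-y\|<\eta$ (the integrand is nonnegative), passes to the limit on $\|x-y\|\ge\eta$ where the integrand is bounded uniformly in $(H,\hat H)$, and sends $\eta\to0$ at the end. No two-scale probability estimate ever enters.

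Beyond being extra work, your sketched remedy has a concrete defect: the grids $S_{H,\ol H}=\{H+1/n\}$ and $S_{H',\ol H}=\{H'+1/n\}$ are generically disjoint, so no single auxiliary scale $h^{*}$ lies in both, and you cannot ``mildly modify'' \eqref{s-set} at this stage without changing the good-point events \eqref{g-set} and hence the very random variables $I^{H,\ol H}_{\alpha,\gamma}(A)$ whose convergence you are proving. The honest fix is to apply Savage's bound to $(X^{h^{*}}(x),X^{h^{**}}(y))$ with $h^{*}\in S_{H,\ol H}$ and $h^{**}\in S_{H',\ol H}$ each close to $-\kappa^{*}/\log\|x-y\|$, redoing the determinant and drift estimates of Section \ref{sec-lem-p} for a non-symmetric pair of thresholds $\alpha/(h^{*}+H)$ and $\alpha/(h^{**}+H')$. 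This is plausibly doable, but it is precisely the bookkeeping that the paper's limsup/liminf argument renders unnecessary; as written, your proposal rests on an asserted rather than proved technical core, and I would restructure it along the paper's lines.
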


We will show that $\{I^{H,\ol H}_{\alpha,\gamma}(A)\}_{H\in (0,\ol H)}$ is a Cauchy sequence in $L^{2}$, this will imply the convergence in Proposition \ref{prop-l2}. 

We first observe that for any $H, \hat H \in (0,\ol H)$ we have  
 \be \label{cs} 
 \begin{aligned} 
 E\Big[ \big( I^{H,\ol H}_{\alpha,\gamma}(A) - I^{\hat H,\ol H}_{\alpha,\gamma}(A)\big)^{2} \Big]& =  E\Big[  I^{H,\ol H}_{\alpha,\gamma}(A)^{2}\Big] -2 E\Big[  I^{H,\ol H}_{\alpha,\gamma}(A) I^{\hat H,\ol H}_{\alpha,\gamma}(A)\Big] \\
 &\quad+ E\Big[  I^{\hat H,\ol H}_{\alpha,\gamma}(A)^{2} \Big].
 \end{aligned} 
 \ee
 In the following two lemmas we derive a sharp upper on $E\Big[  I^{H,\ol H}_{\alpha,\gamma}(A)^{2}\Big]$ and a sharp lower bound on $E\Big[ I^{H,\ol H}_{\alpha,\gamma}(A)  I^{\hat H,\ol H}_{\alpha,\gamma}(A) \Big]$. These lemmas will help us to bound the right-hand side of \eqref{cs}.

 \begin{lemma} \label{lemma-con1}
 We have 
$$
\limsup_{H\rr 0} E\left[I^{H,\ol H}_{\alpha,\gamma}(A)^{2} \right] \leq \int_{A}\int_{A} e^{\gamma^{2}g(x,y)} \frac{1}{\|x-y\|^{\gamma^{2}}} g_{\alpha}(x,y)dx dy, 
$$
where $ g_{\alpha}$ is a nonnegative function depending on $\alpha, \ol H$ and $\gamma$.   
\end{lemma}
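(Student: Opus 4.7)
My plan is to start from the second-moment representation already used in the proof of Proposition \ref{prop:I_L2_boundedness} and then pass the limsup under the integral by a reverse Fatou argument whose integrable dominator is supplied by Lemma \ref{lem-p-star} and the estimates from Proposition \ref{prop-j1}. Concretely, Fubini together with the Cameron--Martin--Girsanov change of measure $\ol P$ from \eqref{ol-p} gives, exactly as in \eqref{IHL2Bound},
\begin{align*}
E\big[I^{H,\ol H}_{\alpha,\gamma}(A)^{2}\big] = \int_{A}\int_{A} F^{H}(x,y)\, \ol P\big(G^{H,\ol H}_{\alpha}(x)\cap G^{H,\ol H}_{\alpha}(y)\big)\,dx\,dy,
\end{align*}
where $F^{H}(x,y) := \exp\!\big( C_{H,H}\gamma^{2}\tfrac{1-\norm{x-y}^{2H}}{2H} + \gamma^{2}g^{H,H}(x,y)\big)$. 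By Assumption \ref{asmp} and Remark \ref{rem-cov}, $F^{H}(x,y)\to e^{\gamma^{2}g(x,y)}\norm{x-y}^{-\gamma^{2}}$ as $H\to 0$ for every $x\neq y$ in $D$, while the probability factor is trivially bounded by $1$.

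Next I would set up reverse Fatou via the region decomposition \eqref{region}. The contribution from $R_{1}$ vanishes in the limit by \eqref{j-1-b}, since the Lebesgue measure of $R_{1}$ decays faster than the pointwise blow-up of $F^{H}$. On $R_{3}\cup R_{4}$ the crude bound $\ol P(\cdot)\le 1$ together with \eqref{J2-b}--\eqref{J3-b} produces a uniform bounded dominator. Crucially, on $R_{2}$ the combination of Lemma \ref{lem-p-star} with the computation leading to \eqref{rnd3} yields the pointwise inequality $F^{H}(x,y)\ol P(\cdots) \leq C\norm{x-y}^{-(1-\bar\xi)\gamma^{2}}$, which is integrable on $A\times A$ because $\gamma<\gamma^{*}(d)=\sqrt{d/(1-\bar\xi)}$. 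These three estimates jointly produce a uniform $L^{1}(A\times A)$ dominator for the integrand on $(A\times A)\setminus R_{1}^{H}$, so reverse Fatou can be applied there while the $R_{1}^{H}$ piece drops out.

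Since $F^{H}(x,y)$ converges pointwise (and not merely in limsup) to $F(x,y)=e^{\gamma^{2}g(x,y)}\norm{x-y}^{-\gamma^{2}}$ for $x\neq y$, the limsup of the product factors as $F(x,y)\,\limsup_{H\to 0}\ol P(\cdots)$. Setting
\begin{align*}
g_{\alpha}(x,y) := \limsup_{H\to 0} \ol P\big(G^{H,\ol H}_{\alpha}(x)\cap G^{H,\ol H}_{\alpha}(y)\big) \in [0,1],
\end{align*}
which is manifestly non-negative, then yields the stated bound. The main obstacle is precisely this reverse Fatou step: the pointwise limit $F$ alone is not integrable on $A\times A$ once $\gamma^{2}\ge d$, so the argument must genuinely exploit the good-point restriction through the sharp two-point estimate of Lemma \ref{lem-p-star} to reduce the effective diagonal singularity from $\norm{x-y}^{-\gamma^{2}}$ to $\norm{x-y}^{-(1-\bar\xi)\gamma^{2}}$, and this is what restricts the admissible range of $\gamma$ to $(0,\gamma^{*}(d))$.
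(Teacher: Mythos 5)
Your mechanics are essentially sound and close to the paper's: the same second-moment identity \eqref{IHL2Bound}, the same region decomposition \eqref{region}, and the same use of Lemma \ref{lem-p-star} via \eqref{rnd3} to reduce the diagonal singularity from $\norm{x-y}^{-\gamma^{2}}$ to $\norm{x-y}^{-(1-\bar\xi)\gamma^{2}}$. The paper organizes this slightly differently — it splits at a fixed scale $\eta$, shows $\sup_{H}I_1(\eta,H)\le\ell(\eta)\to0$ as $\eta\to0$ using the $J_1$ and $J_2$ bounds, and then applies dominated convergence on $\{\norm{x-y}\ge\eta\}$ — rather than a single global reverse Fatou, but these are interchangeable. (A cosmetic point: \eqref{J2-b}--\eqref{J3-b} are integral bounds, not pointwise dominators; on $R_3\cup R_4$ you should instead note that $F^H$ is bounded by a constant depending only on $\ol H$ since $-\log\norm{x-y}\le 2/\ol H$ there.)

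The genuine gap is your definition $g_{\alpha}(x,y):=\limsup_{H\to0}\ol P\big(G^{H,\ol H}_{\alpha}(x)\cap G^{H,\ol H}_{\alpha}(y)\big)$. This does produce a nonnegative function satisfying the displayed inequality, but it proves a strictly weaker statement than what the lemma is for. Lemma \ref{lemma-con1} only has value in tandem with Lemma \ref{lem-lb}, where the \emph{same} $g_{\alpha}$ must reappear as a lower bound for the cross term $E[I^{H}I^{\hat H}]$ so that the three terms in \eqref{cs} cancel and $\{I^{H,\ol H}_{\alpha,\gamma}(A)\}$ is Cauchy in $L^{2}$. The cross term involves a different tilted measure $\hat P$ in \eqref{hat-p}, built from two distinct Hurst indices $H,\hat H$, and there is no a priori reason why $\liminf_{H,\hat H\to0}\hat P(\cdots)$ should dominate your $\limsup_{H\to0}\ol P(\cdots)$. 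The paper closes this by actually establishing the limit: Lemma \ref{mean-lem} shows the Cameron--Martin shifts converge uniformly on $\{\norm{x-y}\ge\eta\}$, so under either tilted measure the law of $(X^{h}(x),X^{h}(y))_{h}$ converges weakly to the same shifted Gaussian family, whence both $\ol P(\cdots)$ and $\hat P(\cdots)$ converge to the one probability $P\big(\wt G_{\ol H}(x)\cap\wt G_{\ol H}(y)\big)=:g_{\alpha}(x,y)$ in \eqref{d1} and \eqref{bb1}. That identification of the limit — not merely a limsup bound — is the missing idea in your argument, and without it the convergence proof of Proposition \ref{prop-l2} does not go through.
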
 
\begin{proof} 
We fix $\eta \in (0,e^{-2/\ol H})$. Note that from \eqref{IHL2Bound} we have 
\be \label{f1}
\begin{aligned} 
 &E\left[I^{H,\ol H}_{\alpha,\gamma}(A)^{2} \right]  \\
 &\leq K \int_A\int_A \mathbf{1}_{\|x-y\| \leq \eta} \exp\Big(C_{H,H} \gamma^{2}\frac{1-\norm{x-y}^{2H}}{2H}\Big)\,  \ol P(G^{H}_\alpha(x) \cap G^{H}_\alpha(y)) dx dy \\
 &\quad+ K \int_A\int_A  \mathbf{1}_{\|x-y\| \geq \eta} \exp\Big(C_{H,H} \gamma^{2}\frac{1-\norm{x-y}^{2H}}{2H}\Big)\,  \ol P(G^{H}_\alpha(x) \cap G^{H}_\alpha(y)) dx dy \\
 &=: K\big(I_{1}(\eta,H) + I_{2}(\eta,H)\big). 
 \end{aligned} 
\ee
Since $\eta <e^{-2/\ol H} $ from \eqref{region} it follows that 
\be \label{i1bl} 
\begin{aligned} 
I_{1}(\eta,H) &\leq \iint_{R_{1}\cup R_{2}}  \mathbf{1}_{\|x-y\| \leq \eta} \exp\Big(C_{H,H} \gamma^{2}\frac{1-\norm{x-y}^{2H}}{2H}\Big)\,  \ol P(G^{H}_\alpha(x) \cap G^{H}_\alpha(y)) dx dy\\
&= \iint_{R_{1}}  \mathbf{1}_{\|x-y\| \leq \eta} \exp\Big(C_{H,H} \gamma^{2}\frac{1-\norm{x-y}^{2H}}{2H}\Big)\,  \ol P(G^{H}_\alpha(x) \cap G^{H}_\alpha(y)) dx dy\\
&\quad +\iint_{  R_{2}}  \mathbf{1}_{\|x-y\| \leq \eta} \exp\Big(C_{H,H} \gamma^{2}\frac{1-\norm{x-y}^{2H}}{2H}\Big)\,  \ol P(G^{H}_\alpha(x) \cap G^{H}_\alpha(y)) dx dy \\
& =: I_{1,1}(\eta,H) +I_{1,2}(\eta,H). 
\end{aligned} 
\ee

Using \eqref{j-i} and \eqref{j-1-b} we get 
\be  \label{gjk1}
I_{1,1}(\eta,H) \leq  C\int\int_{  \{\|x-y\|\leq \eta \wedge e^{-\kappa^{*}/H} \}} \exp\Big((1+\dl) \gamma^{2}\frac{1}{2H}\Big)dx dy. 
\ee
From \eqref{j-i} and \eqref{j-2-b} we have 
\be \label{gjk2}
 I_{1,2}(\eta,H) \leq C   \int\int_{ \|x-y\| \leq \eta} \norm{x-y}^{-(1-\xi) \gamma^{2}} dx dy, 
\ee
where $\xi$ is given by \eqref{xi} and is chosen so that the right-hand side of \eqref{gjk2} is finite for any $\eta \leq 1$ and for $\gamma \leq \gamma^*(d)$. 

By plugging in \eqref{gjk1} and \eqref{gjk2} to \eqref{i1bl}, it follows that there exists a function $\ell(\eta)$ such that for all $\gamma <\gamma^{*}(d)$ we have 
\be \label{I-1-bf}
 \sup_{H\leq \ol H}I_{1}(\eta,H) \leq \ell(\eta), \quad \textrm{where } \ell(\eta) \rr 0 \textrm{ as } \eta \rr 0. 
\ee

Next we bound $I_{2}(\eta,H)$. We will need the following lemma, that follows immediately from \eqref{eq:XHCovariance} and \eqref{c-h}.
\begin{lemma}\label{mean-lem} For any fixed $H_{1} \in (0, \ol H/2)$ we have 
\begin{itemize} 
\item[1.] $$
\lim_{H\rr 0 } \sup_{ x\in A, \, h\geq H_{1}} \left| E\big[X^{h}(x) X^{H}(x)\big]  - C_{h,0}\left( \frac{1}{h} + g^{h}(x,x) \right) \right|, 
$$
\item[2.] 
 $$
\lim_{H\rr 0 } \sup_{\|x-y\|\geq \eta, \, h\geq H_{1}} \left| E\big[X^{h}(x) X^{H}(y)\big]  - C_{h,0}\left( \frac{1-\|x-y\|^{h}}{h} + g^{h}(x,y) \right) \right|.
$$
\end{itemize} 
\end{lemma} 
Recall that $\ol P$ (which depends on $H$) was defined in \eqref{ol-p}. By Lemma \ref{mean-lem} and the Cameron-Martin-Girsanov theorem the joint law of $(X^{h}(x), X^{h}(y))_{h \in (0,\ol H] }$ converges as $H\downarrow 0$ under $\ol P$ to a joint distribution  $(\wt X^{h}(x), \wt X^{h}(y))_{h \in(0,\ol H]}$ with the same covariance structure, but with drifts which are given by 
\be \label{drift1}
\begin{aligned} 
E\big[X^{h}(x)\big]   &= \gamma C_{h,0}\left( \frac{2-\|x-y\|^{h}}{h} + g^{h}(x,x) +g^{h}(x,y)\right), \\
E\big[X^{h}(y)\big]   &= \gamma C_{h,0}\left( \frac{2-\|x-y\|^{h}}{h} + g^{h}(y,y) +g^{h}(x,y) \right). 
\end{aligned}
\ee 
This weak convergence holds uniformly on compacts of $(0,\ol H]$ and on $\|x-y\|\geq \eta $. 

Let 
\be \label{gt-set} 
\wt G_{\ol H}(x)  = \left\{\wt X^{h}(x) \leq  \frac{\alpha}{h}+ \gamma C_{h,0}\left( \frac{2-\|x-y\|^{h}}{h} + g^{h}(x,x) +g^{h}(x,y)\right), \, \forall \, 0<h \leq  \ol H \right\}.
\ee
Then from \eqref{g-set}, \eqref{drift1} and \eqref{gt-set}, we get uniformly on $\|x-y\|>\eta$ we have 
\be \label{d1} 
\lim_{H \rr 0}\ol P\left(G_{H,\ol H}(x) \cap  G_{H,\ol H}(y)\right) =  P(\wt G_{\ol H}(x) \cap \wt G_{\ol H}(y))  := g_{\alpha}(x,y). 
\ee

Using \eqref{eq:XHCovariance}--\eqref{c-h} we get uniformly in $\|x-y\|\geq \eta$ 
\be \label{d222} 
\lim_{H\rr 0} E[X^{H}(x) X^{H}(y)] = -\log\|x-y\| + g^{}(x,y).
\ee
Then from \eqref{d1} and \eqref{d222} and since $g$ is bounded, we can use dominated convergence to get 
\be \label{g1}
\begin{aligned} 
&\lim_{H\rr 0} \int_{A}\int_{A} \mathbf{1}_{\|x-y\| \geq \eta} e^{\gamma^{2}E[X^{H}(x) X^{H}(y)]}\ol P\left(G_{H,\ol H}(x) \cap  G_{H,\ol H}(y)\right)dx dy \\
& = \int_{A}\int_{A} \mathbf{1}_{\|x-y\| \geq \eta} e^{\gamma^{2}g(x,y)} \frac{1}{\|x-y\|^{\gamma^{2}}} g_{\alpha}(x,y)dx dy. 
\end{aligned} 
\ee
To finish the proof we need to show that the right hand side of \eqref{g1} is finite when $\eta \rr 0$. 

Note that from \eqref{p-med} any Lemma \ref{lem-p-star}, for and $\dl >0$ there exists $\ol H$ sufficiently small, such that for all $H\leq \ol H$ we have 
$$
\begin{aligned} 
&\ol P\left(G_{H,\ol H}(x) \cap  G_{H,\ol H}(y)\right) \\
&\leq \frac{C}{\gamma^{2}}  \exp\Big(-(1-\dl)\beta(\dl,\eps)^{2}\gamma^{2}h^{*} \Big(  \frac{1 - \norm{x-y}^{H+h^{*}}}{H+h^{*}}\Big)^{2} \frac{2}{2-\|x-y\|^{2h^{*}}}\Big). 
\end{aligned} 
$$
where $h^{*}$ was defined in \eqref{h-opt} and $\eps=\alpha-\gamma$. From \eqref{must}, \eqref{xi} and by choosing $\dl$ small enough and $\alpha$ close to $\gamma$, we have we have for all $H\leq \ol H$, 
\bd   
(1-\dl)\beta(\dl,\eps)^{2} \gamma^{2}h^{*} \Big(  \frac{1 - \norm{x-y}^{H+h^{*}}}{H+h^{*}}\Big)^{2} \frac{2}{2-\|x-y\|^{2h^{*}}} 
> \gamma^{2} f(\kappa^{*}) \frac{1-\norm{x-y}^{2H}}{2H}. 
\ed

It follows that 
\be \label{fgh} 
\begin{aligned} 
\lim_{H\rr 0}\ol P\left(G_{H,\ol H}(x) \cap  G_{H,\ol H}(y)\right) 
&\leq \lim_{H\rr 0} \frac{C}{\gamma^{2}}  \exp\Big(-   f(\kappa^{*}) \frac{1-\norm{x-y}^{2H}}{2H} \Big) \\ 
&=   \frac{C}{\gamma^{2}}  \exp\Big(\gamma^{2}  f(\kappa^{*}) \log\|x-y\|\Big).
\end{aligned} 
\ee
Therefore using \eqref{d1}, \eqref{fgh}, \eqref{j-2-b} and \eqref{xi}, we get for all $\gamma <\gamma^{*}(d)$, 
$$
\begin{aligned} 
&\sup_{\eta \in  (0,e^{-2/\ol H})}\int_{A}\int_{A} \mathbf{1}_{\|x-y\| \geq \eta} e^{\gamma^{2}g(x,y)} \frac{1}{\|x-y\|^{\gamma^{2}}} g_{\alpha}(x,y)dx dy \\ 
&\leq \sup_{\eta \in  (0,e^{-2/\ol H})} C(\gamma)\int_{A}\int_{A} \mathbf{1}_{\|x-y\| \geq \eta}  \frac{1}{\|x-y\|^{\gamma^{2}}}  \exp\Big(\gamma^{2}  f(\kappa^{*}) \log\|x-y\|\Big)dx dy \\ 
&\leq C(\gamma)\int_{A}\int_{A}  \frac{1}{\|x-y\|^{\gamma^{2}(1-f(\kappa^{*}))}}   dx dy \\ 
&<\infty. 
\end{aligned} 
$$
This proves that the right hand side of \eqref{g1} is finite when $\eta \rr 0$, and therefore the conditions of dominated convergence apply. 
It follows from \eqref{g1} and \eqref{f1} that  
$$
\lim_{\eta \rr 0} \lim_{H\rr0} I_2(\eta,H) \leq \int_{A}\int_{A} e^{\gamma^{2}g(x,y)} \frac{1}{\|x-y\|^{\gamma^{2}}} g_{\alpha}(x,y)dx dy. 
$$

Together with \eqref{f1} and \eqref{I-1-bf} this completes the proof. 
\end{proof} 
 
\begin{lemma} \label{lem-lb}
 We have 
$$
\liminf_{H, \hat H \rr 0} E\left[I^{H,\ol H}_{\alpha,\gamma}(A)I^{ \hat H,\ol H}_{\alpha,\gamma}(A)\right] \geq \int_{A}\int_{A} e^{\gamma^{2}g(x,y)} \frac{1}{\|x-y\|^{\gamma^{2}}} g_{\alpha}(x,y)dx dy. 
$$
\end{lemma}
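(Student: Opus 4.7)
The plan mirrors the strategy of Lemma \ref{lemma-con1}: first express the cross moment via Fubini and the Cameron--Martin--Girsanov theorem, then identify pointwise limits of the two resulting factors, and finally invoke Fatou's lemma. Concretely, tilting by $\gamma X^{H}(x) + \gamma X^{\hat H}(y)$ gives
\begin{align*}
E\big[I^{H,\ol H}_{\alpha,\gamma}(A)\,I^{\hat H,\ol H}_{\alpha,\gamma}(A)\big]
= \int_A\!\!\int_A e^{\gamma^{2}\E[X^{H}(x)X^{\hat H}(y)]}\,
\ol P_{H,\hat H}\!\left(G^{H,\ol H}_{\alpha}(x)\cap G^{\hat H,\ol H}_{\alpha}(y)\right) dx\,dy,
\end{align*}
where $\ol P_{H,\hat H}$ is the probability measure with Radon--Nikodym derivative
$\exp\!\big(\gamma X^{H}(x)+\gamma X^{\hat H}(y)-\tfrac{\gamma^{2}}{2}\E[(X^{H}(x)+X^{\hat H}(y))^{2}]\big)$ with respect to $P$.

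For the deterministic prefactor, \eqref{eq:XHCovariance} together with \eqref{c-h}--\eqref{eq:gEstimate} gives, for every fixed $x\neq y$,
\begin{align*}
\gamma^{2}\E\big[X^{H}(x)X^{\hat H}(y)\big] = \gamma^{2} C_{H,\hat H}\Big(\tfrac{1-\|x-y\|^{H+\hat H}}{H+\hat H} + g^{H,\hat H}(x,y)\Big)
\;\xrightarrow[(H,\hat H)\to 0]{}\; -\gamma^{2}\log\|x-y\| + \gamma^{2} g(x,y),
\end{align*}
so that $e^{\gamma^{2}\E[X^{H}(x)X^{\hat H}(y)]}\to e^{\gamma^{2}g(x,y)}\|x-y\|^{-\gamma^{2}}$.

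For the probability, I would compute the shifted drift of $X^{h}(x)$ under $\ol P_{H,\hat H}$, which equals
$\gamma C_{h,H}\big(\tfrac{1}{h+H}+g^{h,H}(x,x)\big) + \gamma C_{h,\hat H}\big(\tfrac{1-\|x-y\|^{h+\hat H}}{h+\hat H} + g^{h,\hat H}(x,y)\big)$,
together with the symmetric formula for $X^{h}(y)$. The argument of Lemma \ref{mean-lem} shows that, uniformly on $\|x-y\|\geq \eta$ and on any compact set of $h\in(0,\ol H]$ bounded away from zero, these drifts converge as $H,\hat H\to 0$ to the drifts in \eqref{drift1}, while the covariance of $(X^{h}(x),X^{h}(y))_{h}$ is unaffected by the tilt. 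Hence the law under $\ol P_{H,\hat H}$ converges weakly, when restricted to any finite sub-grid of $(0,\ol H]$, to the joint law of $(\wt X^{h}(x),\wt X^{h}(y))_{h\leq \ol H}$ used to define $g_{\alpha}$ in \eqref{d1}. Since the grids $S_{H,\ol H}$ and $S_{\hat H,\ol H}$ eventually contain any prescribed finite sub-grid $S^{\star}\subset(0,\ol H]$, restricting the good-point constraints to $S^{\star}$ and applying the finite-dimensional weak convergence yields
\begin{align*}
\liminf_{(H,\hat H)\to 0}\ol P_{H,\hat H}\!\left(G^{H,\ol H}_{\alpha}(x)\cap G^{\hat H,\ol H}_{\alpha}(y)\right)
\;\geq\; P\big(\wt G_{\ol H}(x)\cap \wt G_{\ol H}(y)\big|_{S^{\star}}\big),
\end{align*}
and then monotone exhaustion $S^{\star}\uparrow(0,\ol H]$ gives $g_{\alpha}(x,y)$ on the right hand side. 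A final application of Fatou's lemma to the nonnegative integrand delivers the claimed liminf bound.

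The main obstacle is the passage to the limit inside the good-point event. The two factors of the product use different grids $S_{H,\ol H}$ and $S_{\hat H,\ol H}$, both depending on the shrinking parameters, and the normalising thresholds $\alpha/(h+H)$ and $\alpha/(h+\hat H)$ also involve $H,\hat H$. These shifts must be absorbed cleanly into the limiting drift \eqref{drift1}: one must verify that, uniformly in $h$ lying in any prescribed compact sub-interval away from $0$, the finite-dimensional distributions under $\ol P_{H,\hat H}$ converge to those of $(\wt X^{h}(x),\wt X^{h}(y))_{h}$, and that the limiting distribution assigns no mass to the boundary of the polyhedral good-point region (ensuring the weak-convergence inequality above is in fact an equality on $S^{\star}$, and hence the monotone limit correctly produces $g_{\alpha}(x,y)$). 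This is a routine but careful Gaussian computation, aided by Lemma \ref{mean-lem} and by the non-degeneracy of the limiting Gaussian laws away from $x=y$.
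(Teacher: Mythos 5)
Your overall route coincides with the paper's: tilt by $\gamma X^{H}(x)+\gamma X^{\hat H}(y)$, identify the limit of the deterministic prefactor via \eqref{eq:XHCovariance}--\eqref{eq:gEstimate}, identify the limit of the good-point probability under the tilted measure, and integrate (the paper first truncates to $\|x-y\|\geq\eta$, applies dominated convergence, and then lets $\eta\to0$, while you propose Fatou; for a liminf lower bound with a nonnegative integrand either device works). Your computation of the prefactor limit and of the shifted drifts under $\ol P_{H,\hat H}$ is correct.

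There is, however, a genuine gap in the step controlling the good-point probability. The event $G^{H,\ol H}_{\alpha}(x)$ is an \emph{intersection} of constraints over $h\in S_{H,\ol H}$, so if the grid contains a prescribed finite sub-grid $S^{\star}$, restricting the constraints to $S^{\star}$ \emph{enlarges} the event; finite-dimensional weak convergence therefore yields $\limsup_{(H,\hat H)\to0}\ol P_{H,\hat H}\big(G^{H,\ol H}_{\alpha}(x)\cap G^{\hat H,\ol H}_{\alpha}(y)\big)\le P\big(\wt G_{\ol H}(x)\cap\wt G_{\ol H}(y)\big|_{S^{\star}}\big)$, i.e.\ an \emph{upper} bound, which after exhausting $S^{\star}$ gives $\limsup\le g_{\alpha}(x,y)$. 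That is the direction relevant to Lemma \ref{lemma-con1}, but Lemma \ref{lem-lb} needs the opposite inequality $\liminf\ge g_{\alpha}(x,y)$: one must show that the full, growing family of constraints --- in particular those near the bottom of the grid, where $h$ is close to $2H$, and the densification of the grid as $H,\hat H\to0$ --- does not push the probability below $g_{\alpha}$. No finite-dimensional truncation can deliver this; the paper instead invokes weak convergence of the whole process $(X^{h}(x),X^{h}(y))_{h\le\ol H/2}$ under $\hat P$, uniformly on compacts of $(0,\ol H]^{2}$ and on $\|x-y\|\ge\eta$ (see \eqref{drift1} and \eqref{bb1}), so that the probability of the good-point event converges to $g_{\alpha}(x,y)$ as a genuine two-sided limit. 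A secondary issue: the grids $S_{H,\ol H}=\{H+1/n\}$ never exactly contain a generic prescribed point of $(0,\ol H]$, so even the containment premise of your sub-grid argument would need repair (e.g.\ by approximating $S^{\star}$ by nearby grid points and using continuity of the Gaussian laws in $h$).
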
 
\begin{proof} The proof is almost identical to the proof of Lemma \ref{lemma-con1}. Repeating the same steps leading to \eqref{IHL2Bound} we get tor any $H, \hat H \in (0,\ol H)$  
$$
\begin{aligned} 
&E\left[I^{H,\ol H}_{\alpha,\gamma}(S)I^{ \hat H,\ol H}_{\alpha,\gamma}(A)\right]   \geq \int_{A}\int_{A} \mathbf{1}_{\|x-y\| \geq \eta} e^{\gamma^{2}E[X^{H}(x) X^{\hat H}(y)]} \hat P\left(G_{H,\ol H}(x) \cap  G_{\hat H,\ol H}(y)\right)dx dy \\
\end{aligned} 
$$
where 
\be \label{hat-p} 
\frac{d \hat {P}}{dP}=e^{\gamma X^{H}(x) + \gamma X^{\hat  H}(y) - \frac{\gamma^{2}}{2}E[X^{H}(x) + X^{\hat H}(y)]^{2}}.
\ee
Again, the joint law of $(X^{h}(x), X^{h}(y))_{h \leq \ol H/2 }$ converges when $H$ and $\hat H$ tend to $0$ under $\hat P$ to  a joint distribution $(\wt X^{h}(x), \wt X^{h}(y))_{h \leq \ol H/2}$ that has the same covariance structure but with drift which is given by \eqref{drift1}.  
This weak convergence is uniform on compacts of $(0,\ol H]^{2}$ and on $\|x-y\|\geq \eta $. 

Recall that $\wt G_{\ol H}(x) $ was defined in \eqref{gt-set}. Then uniformly in $\|x-y\|>\eta$ 
\be\label{bb1}
\lim_{\bar H, H \rr 0}\hat P\left(G_{H,\ol H}(x) \cap  G_{\hat H,\ol H}(y)\right) =  P(\wt G_{\ol H}(x) \cap \wt G_{\ol H}(y))  := g_{\alpha}(x,y). 
\ee
Using \eqref{eq:XHCovariance}--\eqref{c-h} we get uniformly in $\|x-y\|\geq \eta$ 
\be \label{d2} 
\lim_{H, \hat H \rr 0} E[X^{H}(x) X^{\hat H}(y)] = -\log\|x-y\| + g^{}(x,y).
\ee
Since $g$ is bounded on $D\times D$ and from \eqref{bb1}, \eqref{d2}, we can use dominated convergence to get 
$$
\begin{aligned} 
&\liminf_{H, \hat H\rr 0} \int_{A}\int_{A} \mathbf{1}_{\|x-y\| \geq \eta} e^{\gamma^{2}E[X^{H}(x) X^{\hat H}(y)]} \hat P\left(G_{H,\ol H}(x) \cap  G_{\hat H,\ol H}(y)\right)dx dy \\
& \geq \int_{A}\int_{A} \mathbf{1}_{\|x-y\| \geq \eta} e^{\gamma^{2}g(x,y)} \frac{1}{\|x-y\|^{\gamma^{2}}} g_{\alpha}(x,y)dx dy. 
\end{aligned} 
$$
Since $\eta$ is arbitrarily small, the result follows. 
\end{proof} 

Now we are ready to prove Proposition \ref{prop-l2}. 
\begin{proof} [Proof of Proposition \ref{prop-l2}] 
Let $A\in \mathcal A$. From \eqref{cs} and Lemmas \ref{lemma-con1} and \ref{lem-lb} it follows that $\{I^{H, H_0}_{\alpha,\gamma}(A)\}_{H\in (0,\ol H)}$ is a Cauchy sequence in $L^{2}$. 
\end{proof}

Next we show that Proposition \ref{prop-l2} implies the convergence of $\{M^{H}_{\alpha,\gamma}(A)\}_{H\in (0,\ol H)}$ as $H\downarrow 0$ in $L^{1}$. 

\begin{proof} [Proof of convergence in Theorem \ref{thm-convergence}]
Let $A\in \mathcal A$. From Proposition \ref{prop-l2} it follows that $\{I^{H, H_0}_{\alpha,\gamma}(A)\}_{H\in (0,\ol H)}$ is a Cauchy sequence in $L^{2}$. From the explanation at the beginning of this section (see \eqref{m-1}--\eqref{m-4}) it follows that $\{M^{H}_{\gamma}(A)\}_{H\in (0,H_0)}$ converges in $L^{1}$, and therefore it converges  in probability to a limit $M_{\gamma}(A)$, when $\gamma<\gamma^{*}(d)$. The next step is to show that the sequence of measures $\{M^{H}_{\gamma}\}_{H\in (0,\ol H)}$ converges in probability with the weak topology towards a measure $M_{\gamma}$, for $\gamma<\gamma^{*}(d)$. This procedure is identical to the corresponding argument in Section 6 of \cite{berestycki2015elementary}, hence it is omitted. 
\end{proof}

\section{Proof of Lemma \ref{lem:proof_fbf_covariance}} \label{corr-comp}

The values of the constants $h_{h,H}$, $o_{h,H}$, $m_H$, $k_H^d$, $C_{H,h}^d$ that appear in the section are given in Appendix \ref{appendix}.  

\begin{proof} [Proof of Lemma \ref{lem:proof_fbf_covariance}]
It was already shown in \cite[Lemma 1]{lindstrom1993fractional} that the integral in \eqref{eq:fbm_construction_rd} is well defined. 

We therefore only need to prove the explicit form of the covariance structure. 
The proof uses ideas from the proof of \cite[Lemma 3]{lindstrom1993fractional}.
Let $x,y \in\Rd$ and $H,h\in(0,1)$. Then Itô-isometry we have 
\begin{align*}
E\big[\big(B^H(x)-&B^H(y)\big)\big(B^h(x)-B^h(y)\big)\big] \\ 
=&k_H^{d}k_h^{d} \int_{\Rd}\big(\norm{x-u}^{H-\frac{d}{2}} - \norm{y-u}^{H-\frac{d}{2}}\big)\big(\norm{x-u}^{h-\frac{d}{2}} - \norm{y-u}^{h-\frac{d}{2}}\big)du.
\end{align*}
Now consider the following substitution
\begin{align*}
w = \frac{u-y}{\norm{x-y}} \aand \norm{x-u}=\norm{x-y}\norm{e - w},
\end{align*}
where $e:=(x-y)/\norm{x-y}$ is a unit vector.
Note that 
$$
\norm{y-u}=\norm{x-y}\norm{w}.
$$
We therefore get that
\begin{align*}
E\big[\big(B^H(x)-&B^H(y)\big)\big(B^h(x)-B^h(y)\big)\big] = \tilde{c}_{H,h}^d \norm{x-y}^{H+h},
\end{align*}
where,
\be \label{eqr0}
\begin{aligned}
\tilde{c}_{H,h}^d := k_H^{d}k_h^{d}\int_{\Rd}\big(\norm{e -w}^{H-\frac{d}{2}} - \norm{w}^{H-\frac{d}{2}}\big)\big(\norm{e-w}^{h-\frac{d}{2}} - \norm{w}^{h-\frac{d}{2}}\big)dw.
\end{aligned}
\ee
Due to the rotational invariance of the integral, $\tilde{c}_{H,h}^d$ is indeed independent of the orientation of the unit vector $e$.
In the following we fix $e=e_1=(1,0,...,0)$. Therefore, in order to complete the proof we need to show that $\tilde{c}_{H,h}^d= c_{H,h}^d$.

We first recall the definition of the Riesz-kernel $\mathfrak{r}^{\alpha}_d: \Rd \to \R$ (see \cite[Chapter 1.1]{landkof1972foundations}). 
\begin{align*}
\mathfrak{r}^{\alpha}_d (x) = A_d(\alpha)\norm{x}^{\alpha - d} \with A_d(\alpha) = \frac{2^{\frac{d}{2} - \alpha}\Gamma(\frac{d-\alpha}{2})}{\Gamma(\frac{\alpha}{2})},
\end{align*}
and the Riesz-potential $I^{\alpha}_d$
\begin{align*}
    (I^{\alpha}_d\phi)(x) = \int_{\Rd}\mathfrak{r}_d^{\alpha}(x-u)\phi(u)du, \quad \phi\in\mathcal{S},
\end{align*}
for any $0 < \alpha < d$ and for any $\alpha\in\mathbb{C}$ with $\alpha\neq d + 2k, -2k$, $k\in \mathbb{N}$, by analytic continuation (see \cite[Chapter 1.1.2]{landkof1972foundations}).
The Fourier-transform of Riesz-Potential in the sense of distributions is given by
\begin{align*}
\int_{\Rd} \mathfrak{r}^\alpha_d(x)\hat{\phi}(x)dx = \int_{\Rd}\norm{\xi}^{-\alpha}\phi(\xi)d\xi, \quad \textrm{for all } \phi \in \mathcal S,
\end{align*}
where $$\hat{\phi}(\xi) = \mathcal{F}[\phi](\xi) := (2\pi)^{-\frac{d}{2}}\int_{\Rd} e^{-ix\xi} \phi(x)dx.$$
We then have by the linearity of the Fourier-transform that
\begin{align*}
    \int_{\Rd}(\mathfrak{r}^{\alpha}_d(x-e_1)- \mathfrak{r}^{\alpha}_d(x))\hat{\phi}(x)dx = \int_{\Rd}\frac{1-e^{-i\xi e_1}}{\norm{\xi}^{\alpha}}\phi(\xi)d\xi, \quad \phi \in \mathcal{S}.
\end{align*}
However, as we have discussed earlier, the function $x \mapsto (\mathfrak{r}^{\alpha}_d(x-e_1)- \mathfrak{r}^{\alpha}_d(x))$ with $\alpha = H + \frac{d}{2}$ is in $L^{2}(\Rd)$ and therefore its Fourier-transform in the sense of distributions coincides with its Fourier-transform in the sense of $L^{2}$-functions.
We therefore have
\begin{align*}
(2\pi)^{-\frac{d}{2}}\int_{\Rd}\big(\norm{e_1 - w}^{H-\frac{d}{2}} - \norm{w}^{H-\frac{d}{2}}\big)e^{-i\xi w}du = \frac{1}{A_d(H+\frac{d}{2})}\frac{1-e^{-i\xi e_1}}{\norm{\xi}^{H+\frac{d}{2}}},
\end{align*}
for all $\xi \in\Rd, \xi \neq 0$ and it follows from the Plancherel theorem 
that
\begin{align*}
\tilde{c}_{h,H}^{d}=\frac{k_H^{d}k_h^{d}}{A_d(H+\frac{d}{2})A_d(h+\frac{d}{2})} \int_{\Rd} \frac{\abs{1-e^{-i\xi e_1}}^{2}}{\norm{\xi}^{H+h +d}}d\xi.
\end{align*}
Finally, from the evaluation above integral which is given  in Lemma \ref{lem:spherical_riesz_integral} below, we obtain
\begin{align*}
\tilde{c}^{d}_{H,h} 
=& 2\pi^{\frac{d+1}{2}}\frac{k_H^{d}k_h^{d}}{A_d(H+\frac{d}{2})A_d(h+\frac{d}{2})} \frac{\Gamma(\frac{H+h+1}{2})}{\Gamma(\frac{H+h+d}{2})}  \frac{1}{(H+h)\Gamma(H+h)\sin(\frac{H+h}{2}\pi)} \\
=& 2\pi^{\frac{d+1}{2}}\frac{k_H^{d}k_h^{d}}{\Gamma(\frac{d}{4}-\frac{H}{2})\Gamma(\frac{d}{4}-\frac{h}{2})} \frac{\Gamma(\frac{H+h+1}{2})}{\Gamma(\frac{H+h+d}{2})}  \frac{2^{H+h}\Gamma(\frac{d}{4}+\frac{H}{2})\Gamma(\frac{d}{4}+\frac{h}{2})}{(H+h)\Gamma(H+h)\sin(\frac{H+h}{2}\pi)} \\
=& c_{h,H}^{d}.
\end{align*}
\end{proof}

\begin{lemma}\label{lem:spherical_riesz_integral}  
For any $h\in (0,\frac{1}{2})$ we have 
\begin{align*}
\int_{\Rd} \frac{\abs{1-e^{-i\xi e_1}}^{2}}{\norm{\xi}^{2h + d}} d\xi = \pi^{\frac{d+1}{2}}  \frac{\Gamma(h + \frac{1}{2})}{\Gamma(h + \frac{d}{2})h\Gamma(2h)\sin(h\pi)}.
\end{align*} 
\end{lemma}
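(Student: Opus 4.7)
The plan is to reduce the $d$-dimensional integral to a product of a lower-dimensional Beta integral and a classical one-dimensional sine integral, and then to simplify with the Gamma reflection formula.

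First I would note that $\abs{1 - e^{-i\xi_1}}^2 = 2(1-\cos \xi_1)$, where $\xi_1 = \xi \cdot e_1$, so the integrand depends on $\xi$ only through $\xi_1$ and $\|\xi\|$. I would decompose $\xi = (\xi_1, \tilde\xi)$ with $\tilde\xi \in \R^{d-1}$, and use spherical coordinates on $\tilde\xi$, picking up the surface area $\omega_{d-2} = 2\pi^{(d-1)/2}/\Gamma((d-1)/2)$ of $S^{d-2}$. The transverse radial integral is
\begin{equation*}
\int_0^\infty \frac{r^{d-2}}{(\xi_1^2 + r^2)^{h + d/2}}\, dr = \frac{\abs{\xi_1}^{-2h-1}}{2}\, B\!\left(\frac{d-1}{2}, h + \frac{1}{2}\right) = \frac{\abs{\xi_1}^{-2h-1}}{2}\cdot \frac{\Gamma(\frac{d-1}{2})\Gamma(h + \frac{1}{2})}{\Gamma(h + \frac{d}{2})},
\end{equation*}
by the substitution $u = r^2/\xi_1^2$ reducing it to the standard Beta representation.

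Collecting the constants, $\omega_{d-2}\Gamma(\frac{d-1}{2}) = 2\pi^{(d-1)/2}$, so the whole integral equals
\begin{equation*}
\frac{2\pi^{(d-1)/2}\,\Gamma(h + \tfrac{1}{2})}{\Gamma(h + \tfrac{d}{2})} \cdot 2 \int_0^\infty \frac{1-\cos \xi_1}{\xi_1^{2h+1}}\, d\xi_1.
\end{equation*}
The remaining one-dimensional integral I would evaluate by integration by parts (the boundary terms vanish since $1-\cos x \sim x^2/2$ near $0$ and $h < 1/2$), obtaining
\begin{equation*}
\int_0^\infty \frac{1-\cos x}{x^{2h+1}}\, dx = \frac{1}{2h}\int_0^\infty \frac{\sin x}{x^{2h}}\, dx = \frac{\Gamma(1-2h)\cos(\pi h)}{2h},
\end{equation*}
using the classical Mellin formula $\int_0^\infty x^{a-1}\sin x\,dx = \Gamma(a)\sin(\pi a/2)$ with $a = 1-2h$.

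Finally, I would combine everything and simplify: the target identity reduces to checking $2\Gamma(1-2h)\Gamma(2h)\sin(h\pi)\cos(h\pi) = \pi$, which is immediate from Euler's reflection formula $\Gamma(1-2h)\Gamma(2h) = \pi/\sin(2h\pi)$ together with the double-angle identity $\sin(2h\pi) = 2\sin(h\pi)\cos(h\pi)$. No step is a genuine obstacle; the only mild care needed is tracking the numerical factors and justifying the use of Fubini and integration by parts for $h \in (0, 1/2)$, both of which follow from the fact that $(1-\cos\xi_1)/\|\xi\|^{2h+d}$ is in $L^1(\R^d)$ under that restriction on $h$.
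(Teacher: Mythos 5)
Your computation is correct, but it is worth noting that the paper does not actually prove this lemma: its ``proof'' consists of two citations (Samorodnitsky--Taqqu, Proposition 7.2.8, for $d=1$, and Lacaux's thesis, Section 3.6, for $d\ge 2$). So your argument is a genuinely different, self-contained route. I checked the pieces: the slicing $\xi=(\xi_1,\tilde\xi)$ and the Beta evaluation of the transverse integral are right (the factor $2$ from $\abs{1-e^{-i\xi_1}}^2=2(1-\cos\xi_1)$ cancels the $\tfrac12$ from the Beta substitution, and the remaining $2$ comes from folding the $\xi_1$-integral onto $(0,\infty)$); the integration by parts is legitimate since $(1-\cos x)x^{-2h}\to 0$ at both endpoints for $0<h<1$; the Mellin formula applies because $1-2h\in(0,1)$ exactly when $h\in(0,\tfrac12)$; and the final identity $2\Gamma(1-2h)\Gamma(2h)\sin(h\pi)\cos(h\pi)=\pi$ is indeed Euler reflection plus the double-angle formula. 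The only caveat is that the slicing argument presupposes $d\ge 2$ (there is no sphere $S^{d-2}$ to integrate over when $d=1$); for $d=1$ you should simply observe that the integral is $4\int_0^\infty(1-\cos x)x^{-2h-1}\,dx$ and that your one-dimensional evaluation then gives the stated formula directly, which it does. What your approach buys is a transparent, verifiable proof in place of an appeal to a French-language thesis; what the paper's approach buys is brevity. Either would be acceptable here.
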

\begin{proof}
The case where $d=1$ was proved in \cite[Chapter 7.2, Proposition 7.28]{samoradnitsky2017stable} and the case where $d\ge 2$ was proved in \cite[Section 3.6]{lacaux2004multifractional}.
\end{proof}

\section{Proof of Theorem \ref{thm:fbm_normalization}}\label{sec:proof_normalization}
In order to prove Theorem \ref{thm:fbm_normalization}, we will show that the integral (\ref{def:XH}), that is 
$$
I^H(x)=\int_{\Rd} B^{H}(u) \psi(u, x)du, \quad x \in \re^d, 
$$
is well defined, almost surely finite and Gaussian and that the normalized filed has the covariance structure as in (\ref{eq:XHCovariance}). Then will prove that the Gaussian fields in \eqref{eq:mandelbrot_van_ness} and \eqref{eq:fbm_construction_rd} after normalization, satisfy Assumption \ref{asmp}. 

Note that $\{\tilde B^H\}_{H \in (0,1)}$ in \eqref{eq:mandelbrot_van_ness} and  $\{ B^H\}_{H \in (0,1/2)}$ in \eqref{eq:one_dim_symmetric} and \eqref{eq:fbm_construction_rd} satisfy the covariance relation \eqref{cov-mv} and \eqref{cov-fbf}, respectively. In what follows we will take the interval of $H$ in the construction above to be $0\leq H< H_0$, where $H_0=1$ for \eqref{eq:mandelbrot_van_ness} and $H_0=1/2$ for \eqref{eq:one_dim_symmetric} and \eqref{eq:fbm_construction_rd}.

\textbf{Step 1: Properties of $I^H$.} It is well known that fractional Brownian fields are almost surely H\"older continuous and in particular measurable on $\Rd$ \cite[Chapter 8.3, Theorem 8.3.2]{adler2010geometry}.
Moreover, for any $\eps >0$ there exists an almost surely finite random variable $K_\eps$ such that
\be \label{poi1}
\abs{B^{H}(x)} \le K_\eps(1 + \norm{x}^{H+\eps}) \foralll x\in\Rd, \ \text{a.s.},
\ee
(see \cite[Lemma 5 and Remark 5]{kozachenko2011drift}).  

Recall that the class of normalizing functions $\mathcal N_{H_0}(D)$ was defined in Definition \ref{def-ker}.
Let $\psi \in \mathcal N_{H_0}(D)$. From (\ref{psiConditionOne}), (\ref{psiConditionTwo}) and \eqref{poi1} it follows that
\begin{align*}
\int_{\Rd}\abs{B^{H}(u)}\psi(u,y)du  <\infty, \quad \textrm{for all } x\in D, \ 0<H<H_0. \quad\text{a.s.}
\end{align*}

Hence the integral in (\ref{def:XH}) is well defined and almost surely finite.
In order to show that it is also Gaussian, we use a standard a  Riemann-sums approximation, which are clearly Gaussian.

\textbf{Step 2: Covariance structure. } We first prove that the normalized field $X^H$ in \eqref{def:XH} has the covariance structure (\ref{eq:XHCovariance}) for $\{B^H\}_{H\in (0,H_0)}$ in (\ref{eq:one_dim_symmetric}) and (\ref{eq:fbm_construction_rd}).

Recall that in both these cases we have 
\be \label{cov-unif}
\E\big[ B^{H}(x) B^{h}(y) \big] = c^d_{H, h}\big( \norm{x}^{H + h} + \norm{y}^{H+h} - \norm{x-y}^{H+h}\big).
\ee
In order to calculate the covariance of $X^H$ we use of Fubini's theorem, so that expectation and the integral in \eqref{def:XH} can be interchanged. We first verify the essential integrability condition.    
By Young's inequality we have 
\begin{align*}
\E\big[\abs{B^{H}(x) B^{h}(y)} \big] \le \frac{1}{2}\E\big[B^{H}(x)^{2} + B^{h}(y)^{2} \big] = C(H, h)\left(\norm{x}^{2H} + \frac{1}{2}\norm{y}^{2h}\right). 
\end{align*}
Then from (\ref{psiConditionOne}) and (\ref{psiConditionTwo}) and \eqref{cov-unif} we get for all $0 < H,h < H_0$ and $x,y \in D$,
\begin{align*}
\int_{\Rd}\E\big[\abs{B^{H}(x) B^{h}(u)} \big] \psi(u,y)du &   < \infty, \\
\int_{\Rd}\int_{\Rd}\E\big[\abs{B^{H}(v) B^{h}(u)} \big] \psi(u,y) \psi(v,x)du & < \infty.
\end{align*}

Using \eqref{def:XH} and Fubini's theorem we get 
\be  \label{cov-comp}
\begin{aligned}
&\frac{1}{(\Gamma(H)\Gamma(h))^{\frac{1}{2}}}\E\big[ X^{H}(x)X^{h}(y)\big] \\
&=\; \E\big[B^{H}(x) B^{h}(y)\big] - \int_{\Rd}\E\big[B^{H}(x) B^{h}(u)\big] \psi(u,y) du  \\
&\quad - \int_{\Rd}\E\big[B^{H}(y) B^{h}(u)\big] \psi(u,x) du  
+ \int_{\Rd}\int_{\Rd} \E\big[B^{H}(u) B^{h}(v)\big] \psi(u,y)\psi(v,x) du dv \\
&:=\sum_{i=1}^4L_i(x,y). 
\end{aligned}
\ee
Using \eqref{cov-unif} and then (\ref{psiConditionOne}) we get
\begin{align*}
\frac{ L_2(x,y)}{c_{H,h}^d}
=& -\int_{\Rd}\big( \norm{x}^{H + h} + \norm{u}^{H+h} - \norm{x-u}^{H+h} \big) \psi(u,y) du \\
=& -\norm{x}^{H + h} - \int_{\Rd} \norm{u}^{H+h} \psi(u,y) du + \int_{\Rd} \norm{x-u}^{H+h}  \psi(u,y) du 
\end{align*}
and by symmetry we have 
\begin{align*}
\frac{ L_3(x,y)}{c_{H,h}^d}=\frac{ L_2(y,x)}{c_{H,h}^d}.
\end{align*}
By the same argument we have 
\begin{align*}
\frac{L_4(x,y)}{c^d_{H,h}} =& \underset{\Rd}{\int} \norm{u}^{H+h} \psi(u,y) du + \underset{\Rd}{\int} \norm{v}^{H+h} \psi(v,x) dv \\
&-\underset{\Rd\times\Rd}{\iint} \norm{v-u}^{H+h} \psi(u,y)\psi(v,x) dudv.
\end{align*}
Plugging in the expressions for $L_i$, $i=1,...,4$, to \eqref{cov-comp}, we get for all $x,y \in D$ that 
\be \label{gg1} 
\begin{aligned}
\frac{\E\big[ X^{H}(x)X^{h}(y)\big]}{c_{H,h}^d\sqrt{\Gamma(H)\Gamma(h)}} =& \norm{x-y}^{H+h} + \int_{\Rd} \norm{x-u}^{H+h}  \psi(u,y) du \\
&+ \int_{\Rd} \norm{y-v}^{H+h}  \psi(v,x) dv\\
&-\underset{\Rd\times\Rd}{\iint} \norm{v-u}^{H+h} \psi(u,y)\psi(v,x) dudv.
\end{aligned}
\ee
Define
\be \label{gg2} 
\begin{aligned}
C_{H,h} = c_{H,h}^d\sqrt{\Gamma(H)\Gamma(h)}(H+h), 
\end{aligned}
\ee
and 
\be  \label{gg3} 
\begin{aligned}
g^{H, h}(x,y) =& \int_{\Rd} \frac{1-\norm{x-u}^{H+h}}{H+h} \psi(u,y) du + \int_{\Rd} \frac{1-\norm{y-v}^{H+h}}{H+h}  \psi(v,x) dv \\
&-\int_{\Rd}\int_{\Rd} \frac{1-\norm{u-v}^{H+h}}{H+h} \psi(u,y)\psi(v,x) dudv.
\end{aligned}
\ee
Then using (\ref{psiConditionOne}) along with \eqref{gg1}--\eqref{gg3} we get that  
\begin{equation*} 
\E(X^{H}(x)X^{h}(y)) = C_{H, h}\left( \frac{1- \norm{x-y}^{H+h}}{H+h} + g^{H,h}(x,y) \right), 
\end{equation*}
as needed. 

The boundedness of $g^{H,h}$ on $D\times D$ is a direct consequence of condition (\ref{psiConditionTwo}). 

In order to simplify the notation in what follows, we define  $$l^{h}(x)=(1-\norm{x}^h)h^{-1}$$
and denote by $\star$ the convolution operation. 

From \eqref{gg3} we get 
\be \label{g-conv}
\begin{aligned}
g^{H, h}(x,y)  =  (l^{H+h}\star\psi(\cdot,y))(x) +  (l^{H+h}\star\psi(\cdot,x))(y) + (l^{H+h}\star\psi(\cdot,x)\star\psi(\cdot,y))(0).
\end{aligned}
\ee

Next we deal with the family of fractional Brownian motions $\{\wt B^H\}_{H \in (0,H_0)}$ in (\ref{eq:mandelbrot_van_ness}).  The proof in this case follows the same lines. The only difference appears in $C_{H,h}$ and $g^{H,h}$ as we get that 
\begin{align*}
C_{H,h} = b_{h,H}\sqrt{\Gamma(H)\Gamma(h)}(H+h),
\end{align*}
and
\be \label{g-mv}
\begin{aligned}
g^{H,h}(x,y) =&  (l^{H+h}\star\psi(\cdot,y))(x) +  (l^{H+h}\star\psi(\cdot,x))(y) + (l^{H+h}\star\psi(\cdot,x)\star\psi(\cdot,y))(0) \\
&+\frac{o_{H,h}}{b_{H,h}(H+h)}\bigg(\int_{\R} \mathrm{sgn}(u)\abs{x-u}^{H+h} \psi(u,y) du \\
&+ \int_{\R} \mathrm{sgn}(v)\abs{y-v}^{H+h}\psi(v,x) dv \\
&-\int_{\R}\int_{\R} \mathrm{sgn}(v-u)\abs{u-v}^{H+h}\psi(u,y)\psi(v,x) dudv\bigg).
\end{aligned} 
\ee
Again the boundedness of $g^{H,h}$ on $D\times D$ is a direct consequence of (\ref{psiConditionTwo}).

\textbf{Step 3: Property \eqref{c-h} for $C_{H,h}$.} 
 We first prove \eqref{c-h} for $\{B^H\}_{H\in (0,H_0)}$ in (\ref{eq:one_dim_symmetric}) and (\ref{eq:fbm_construction_rd}).

From the definition of $c^{d}_{H,h}$ in Appendix \ref{appendix} and the definition of $C_{H,h}$ in \eqref{gg2} we get
\begin{align*}
C_{H,h} =& \sqrt{\Gamma(H)\Gamma(h)}\frac{\Gamma(\frac{H+h+1}{2})\sqrt{\Gamma(\frac{H+d}{2})H\Gamma(2H)\sin(\pi H)\Gamma(\frac{h+d}{2})h\Gamma(2h)\sin(\pi h)}}{\Gamma(\frac{H+h+d}{2})\Gamma(H+h)\sin(\frac{H+h}{2}\pi)\sqrt{\Gamma(H+\frac{1}{2})\Gamma(h+\frac{1}{2})}} \\
=& \frac{\sqrt{\pi}2^{h+H}\Gamma(1-\frac{H+h}{2})\Gamma(\frac{H+d}{2})\Gamma(\frac{h+d}{2})\sqrt{\Gamma(2H +1) \Gamma(2h + 1)}}{\sqrt{\Gamma(H+\frac{1}{2})\Gamma(h+\frac{1}{2})}\Gamma(\frac{H+h+d}{2})\sqrt{\Gamma(1-H)\Gamma(1-h)}},
 \end{align*}
where we used the duplication and reflection formula for the gamma function in the last quality.

Since the gamma function is smooth on the positive half-line away from zero, it follows that $C_{H,h}$ is smooth in $h,H \in [0, \frac{1}{2})$.
Since $\Gamma(\frac{1}{2}) = \sqrt{\pi}$, we get that $C_{0,0}=1$, and the statement about the convergence of $C_{H,h}$ in \eqref{c-h} follows.

The proof of \eqref{c-h} for the family of fractional Brownian motions $\{\wt B^H\}_{H \in (0,H_0)}$ in (\ref{eq:mandelbrot_van_ness}) follows by a similar argument.

\textbf{Step 4: property \eqref{eq:gEstimate} for $g^{H,h}$.}  We first prove \eqref{eq:gEstimate} for $\{B^H\}_{H\in (0,H_0)}$ in (\ref{eq:one_dim_symmetric}) and (\ref{eq:fbm_construction_rd}).

We will first identify the limiting function $g$  by using dominated convergence 
\be \label{g-fbf}
\begin{aligned}
g(x,y)&:=\lim_{H\to 0}g^{H,H}(x,y) \\
&= \int_{\Rd}\log\norm{x-u}\psi(u,y)du + \int_{\Rd}\log\norm{x-u}\psi(v,x)dv \\
&\quad -\int_{\Rd}\int_{\Rd}\log\norm{u-v} \psi(u,y)\psi(v,x)dudv
\end{aligned}
\ee
Note that the conditions of the dominated convergence theorem are satisfied since, 
\begin{align*}
\abs{\frac{1 - \norm{x-y}^{2H}}{2H}} \le& \log  \frac{1}{\norm{x-y}}\mathbf{1}_{\{\norm{x-y}<1\}} + \abs{1-\norm{x-y}} \mathbf{1}_{\{\norm{x-y}\ge1\}} \\
\le& (\log_-\norm{x-y})^{2} + 1 + \norm{x} + \norm{y} =: f(x,y).
\end{align*}

Then by the conditions (\ref{psiConditionOne}) -- (\ref{psiConditionFour}) it holds for all $y,x\in D$
\begin{align*}
\int_{\Rd}f(x,u)\psi(u,y) du  < \infty, \\
\int_{\Rd}\int_{\Rd}f(v,u)\psi(u,y)\psi(v,x) du dv  < \infty.
\end{align*}
This justifies the use of dominated convergence and also proves the boundedness of $g$ on $D\times D$.

In order to show \eqref{eq:gEstimate} we will use the following lemma, which is proved in the end of the section. 

\begin{lemma}\label{lem:log_approximation}
For any $0\leq h \leq 1$ we have 
\begin{align*}
0 \le\; \log\frac{1}{r} - \frac{1-r^{h}}{h} \;\le 
\left\{ \begin{array}{cc}
\frac{h}{2}\log^{2}(r) & 0 < r\le 1, \\
h(r - 1 - \log(r)) & r > 1.
\end{array}\right.
\end{align*}
\end{lemma}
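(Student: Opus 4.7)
The plan is to rewrite the central quantity
\[
D(h,r) := \log\frac{1}{r} - \frac{1-r^{h}}{h}
\]
as an explicit power series in $h$ using $r^{h}=e^{h\log r}=\sum_{k\ge 0}(h\log r)^{k}/k!$. A direct computation gives
\[
D(h,r) \;=\; \sum_{k=2}^{\infty} \frac{h^{k-1}(\log r)^{k}}{k!},
\]
valid for all $r>0$ and $h\in(0,1]$ (the $h=0$ case holds trivially since every expression vanishes). All three bounds will be read off from this identity, but the behaviour of the series depends on the sign of $\log r$, so the two upper bounds must be handled separately.

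For the lower bound $D(h,r)\ge 0$, I would avoid the alternating-sign series entirely and instead use convexity of $h\mapsto r^{h}$ (its second derivative is $r^{h}\log^{2} r\ge 0$): the secant slope on $[0,h]$ dominates the derivative at $0$, i.e.\ $(r^{h}-1)/h \ge \log r$, which rearranges to $D(h,r)\ge 0$.

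For the upper bound when $r>1$: here $\log r>0$, so every term of the series is non\-negative, and the simple estimate $h^{k-1}\le h$ (valid for $k\ge 2$ and $h\in(0,1]$) yields
\[
D(h,r) \;\le\; h\sum_{k=2}^{\infty}\frac{(\log r)^{k}}{k!} \;=\; h\bigl(e^{\log r}-1-\log r\bigr) \;=\; h(r-1-\log r).
\]

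For the upper bound when $0<r\le 1$: the series now alternates, so the term-wise trick fails. Instead I would rearrange the desired inequality $D(h,r)\le \tfrac{h}{2}\log^{2} r$ into the equivalent form
\[
r^{h} \;\le\; 1 + h\log r + \tfrac{1}{2}(h\log r)^{2},
\]
and substitute $u:=h\log r\le 0$, reducing everything to the elementary claim $e^{u}\le 1+u+u^{2}/2$ for $u\le 0$. This last inequality follows from $\Phi(u):=1+u+u^{2}/2-e^{u}$ satisfying $\Phi(0)=\Phi'(0)=0$ and $\Phi''(u)=1-e^{u}\ge 0$ on $(-\infty,0]$: thus $\Phi'$ is non-decreasing and $\le \Phi'(0)=0$ on $(-\infty,0]$, so $\Phi$ is non-increasing there and $\ge \Phi(0)=0$.

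No step is really an obstacle; the only subtlety worth flagging is that the two upper bounds are genuinely needed in their two regimes (the ``obvious'' bound $h(r-1-\log r)$ from the $r>1$ argument cannot be run for $r\le 1$ because the series terms alternate, and conversely the ``quadratic'' bound $\tfrac{h}{2}\log^{2} r$ is not large enough to dominate the strictly positive series when $r>1$).
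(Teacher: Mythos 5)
Your proof is correct, and it takes a genuinely different route from the paper's. The paper works with the single auxiliary function $\eta(h,r)/h$ and differentiates it in $h$: it computes $\gamma(h,r)=\tfrac{\partial}{\partial h}\bigl(\eta(h,r)/h\bigr)$ explicitly, shows $\gamma\le 0$ for $0<r\le 1$ and $\gamma\ge 0$ for $r>1$ (using the symmetry $\gamma(h,r)=-e^{h\log r}\gamma(h,r^{-1})$), and then reads off both upper bounds as the monotone limits of $\eta(h,r)/h$ at $h\to 0$ (where Taylor's theorem gives $\tfrac12\log^2 r$) and at $h=1$ (where one gets $r-1-\log r$). You instead expand $D(h,r)$ as the explicit series $\sum_{k\ge 2} h^{k-1}(\log r)^k/k!$ and treat the two regimes by different elementary devices: a termwise bound $h^{k-1}\le h$ when all terms are positive ($r>1$), and the reduction to $e^u\le 1+u+u^2/2$ for $u\le 0$ when the series alternates ($r\le 1$); your lower bound via convexity of $h\mapsto r^h$ is the same inequality $e^x\ge 1+x$ the paper invokes. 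The paper's approach is more unified (one monotonicity statement explains why each bound is attained at the appropriate endpoint of $h\in(0,1]$), but its sign analysis of $\gamma$ is the delicate step, and as written the intermediate estimate for $0<r\le1$ is a little loose; your argument avoids that entirely and each step is checkable by inspection. Your closing observation that neither upper bound can be transported to the other regime is also correct and consistent with the paper's monotonicity picture: $\eta(h,r)/h$ runs between $\tfrac12\log^2 r$ and $r-1-\log r$ in opposite directions on the two regimes, so each bound is sharp at one endpoint of its own regime and fails on the other.
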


For $r>0$ and $h\in[0,1]$ define 
\begin{align} \label{eta}
\eta(h, r) = \log\frac{1}{r} - \frac{1 - r^{h}}{h}.
\end{align}
Then from the definitions of $g^{H,h}$ and $g$ in in \eqref{gg3} and \eqref{g-fbf}, respectively, we have   
\begin{align*}
&g(x,y) - g^{H, h}(x,y) \\
&= - \int_{\Rd} \eta(H+h, \norm{u-x})\psi(u,y)du  - \int_{\Rd} \eta(H+h, \norm{v-y})\psi(v,x)dv \\
&\quad + \int_{\Rd}\int_{\Rd} \eta(H+h, \norm{u-v})\psi(u,y)\psi(v,x)du dv.
\end{align*}
From Lemma \ref{lem:log_approximation} we have for all $x,y \in D$ and $h,H \in (0,\frac{1}{2})$,
\begin{align*}
0 \le \eta(H+h, \norm{x-y}) \le C (H+h)\left(\left(\log_-\norm{x-y}\right)^2 + \norm{x} + \norm{y}\right).
\end{align*}

Hence we have from (\ref{psiConditionTwo})--(\ref{psiConditionFour}) we get that 
\begin{align*}
\sup_{x,y \in D} |g(x,y) - g^{H,h}(x,y)| \le C(H+h), 
\end{align*}
 and we proved \eqref{eq:gEstimate} for $\{B^H\}_{H\in (0,H_0)}$ in (\ref{eq:one_dim_symmetric}) and (\ref{eq:fbm_construction_rd}). 
 
 Finally we prove \eqref{eq:gEstimate} for $\{\wt B^H\}_{H \in (0,H_0)}$ in (\ref{eq:mandelbrot_van_ness}).
 
First we notice that the function $g$ in this case is similar to $g$ in \eqref{g-fbf}. Indeed  $o_{H,H} = 0$ in \eqref{g-mv} for all $0< H < H_0$, therefore $g^{H,H}$ in  \eqref{g-mv} is  identical to \eqref{gg3}, which convergence towards the function $g$

Note that the first part of $g^{H,h}$ in  \eqref{g-mv} is identical to $g^{H,h}$ in \eqref{g-conv}, for which we have proved \eqref{eq:gEstimate}.  
It follows that we only need to show that  
\begin{align*}
T^{H,h}(x,y):=\frac{o_{H,h}}{b_{H,h}(H+h)}\bigg(&\int_{\R} \mathrm{sgn}(u)\abs{x-u}^{H+h} \psi(u,y) du \\ 
&+ \int_{\R} \mathrm{sgn}(v)\abs{y-v}^{H+h}\psi(v,x) dv \\
&-\int_{\R}\int_{\R} \mathrm{sgn}(v-u)\abs{u-v}^{H+h}\psi(u,y)\psi(v,x) dudv\bigg),
\end{align*}
converges to zero uniformly in the same sense.

Note from (\ref{psiConditionTwo}) we get 
$$
\sup_{x,y \in D} |T^{H,h}(x,y)| \leq C \abs{\frac{o_{H,h}}{b_{H,h}(H+h)}}. 
$$
Them statement then follows by estimating the multiplicative factor 
\begin{align*}
\abs{\frac{o_{H,h}}{b_{H,h}(H+h)}}=\abs{\frac{\sin((h-H)\frac{\pi}{2})\sin((h+H)\frac{\pi}{2})}{\cos((h-H)\frac{\pi}{2})\cos((h+H)\frac{\pi}{2})(H+h)}} \le C (H+h), 
\end{align*}
for all $0< h, H < H_0.$   
\qed
\begin{proof}[Proof of Lemma \ref{lem:log_approximation}]
Let $\eta(h,r)$ as in \eqref{eta}. Note that the lower bound $\eta(h,r)\ge 0$ is equivalent to $e^{h\log(r)} -1 \ge h\log(r)$, which is obviously true.

Next, we prove the upper bound on $\eta(h,r)$. Taylor's theorem applied to the function  $h \mapsto 1 - r^{h}$ at $h=0$ yields
\begin{equation}\label{lim:remainder_l_expansion}
\lim_{h\to0}\; \frac{\eta(h, r)}{h} = \frac{1}{2} \log^{2}(r).
\end{equation}
We further analyse the behaviour of $\frac{\eta(h, r)}{h}$.
Differentiation in $h$ yields
\begin{align*}
\gamma(h,r) := \frac{\partial}{\partial h}\Big(\frac{\eta(h, r)}{h} \Big)
&= \frac{\partial}{\partial h}\Big(\frac{-h\log(r) - (1-r^{h})}{h^{2}}\Big) \\
&= \frac{h\log(r)\big(e^{h\log(r)} + 1\big) + 2\big(1-e^{h\log(r)} \big)}{h^3}.
\end{align*}
We observe that for any $h>0$ and $0<r\leq 1$ we have 
$$
  \gamma(h,r) \le  \frac{h\log(r)2 + 2(-h\log(r))}{h^3} \leq 0.
$$  
Hence the convergence in (\ref{lim:remainder_l_expansion}) is monotone when $0<r\le1$, and in particular
\begin{equation}
0 \le \frac{\eta(h, r)}{h} \le \frac{1}{2} \log^{2}(r), \for 0<r\le1.
\end{equation}

Also for any $h>0$ and $r>1$
$$
  \gamma(h,r) = -e^{h\log(r)} \gamma(h,r^{-1}) \ge 0.
$$
We obtain the bound for $h>0$
$$0 \le \frac{\eta(h, r)}{h} \le \frac{\eta(1, r)}{1} \le -\log(r) + r - 1, \for \  r>1.$$
\end{proof}

\appendix
\section{List of Constants} \label{appendix} 
In order to ease the notation, we summarized the constants that appear in Sections \ref{corr-comp} and \ref{sec:proof_normalization} in the following list
\begin{align*}
m_H &= \frac{\sqrt{\Gamma(2H+1)\sin(\pi H)}}{\Gamma(H+\frac{1}{2})} \\
a_{h,H} &= \frac{1}{\pi}\sqrt{\Gamma(2h +1)\sin(\pi h)} \sqrt{\Gamma(2H +1)\sin(\pi H)} \Gamma(-(h+H)) \\
b_{h,H} &= a_{h,H}\cdot\cos\Big((h-H)\frac{\pi}{2}\Big)\cos\Big((h+H)\frac{\pi}{2}\Big) \\
o_{h,H} &= a_{h,H}\cdot\sin\Big((h-H)\frac{\pi}{2}\Big)\sin\Big((h+H)\frac{\pi}{2}\Big) \\
k^{d}_H & = \sqrt{\frac{\Gamma(\frac{d}{4}-\frac{H}{2})^{2}\Gamma(H+\frac{d}{2})H\Gamma(2H)\sin(\pi H)}{2^{2H+1}\pi^{\frac{d+1}{2}}\Gamma(\frac{d}{4}+\frac{H}{2})^{2}\Gamma(H+\frac{1}{2})}}\\
c^{d}_{H,h} &= 2\pi^{\frac{d+1}{2}}\frac{k_H^{d}k_h^{d}}{\Gamma(\frac{d}{4}-\frac{H}{2})\Gamma(\frac{d}{4}-\frac{h}{2})} \frac{\Gamma(\frac{H+h+1}{2})}{\Gamma(\frac{H+h+d}{2})}  \frac{2^{H+h}\Gamma(\frac{d}{4}+\frac{H}{2})\Gamma(\frac{d}{4}+\frac{h}{2})}{(H+h)\Gamma(H+h)\sin(\frac{H+h}{2}\pi)}\\
&=  \frac{\Gamma(\frac{H+h+1}{2})\sqrt{\Gamma(\frac{H+d}{2})H\Gamma(2H)\sin(\pi H)\Gamma(\frac{h+d}{2})h\Gamma(2h)\sin(\pi h)}}{\Gamma(\frac{H+h+d}{2})(H+h)\Gamma(H+h)\sin(\frac{H+h}{2}\pi)\sqrt{\Gamma(H+\frac{1}{2})\Gamma(h+\frac{1}{2})}}
\end{align*}

\section*{Acknowledgments}
We are very grateful to Nathanael Berestycki whose numerous useful comments enabled us to significantly improve this paper.

\end{document}